\pgfplotsset{compat=newest}
\declaretheorem[name=Theorem,numberwithin=section]{thm}
\newtheorem{prop}[thm]{Proposition}
\newtheorem{lem}[thm]{Lemma}
\theoremstyle{definition}
\newtheorem{defn}[thm]{Definition}
\newtheorem{exmp}[thm]{Example}
\theoremstyle{remark}
\newtheorem{rem}[thm]{Remark}
\let\c@equation\c@thm
\numberwithin{equation}{section}
\newcommand{\cat}{\mathrm{CAT}}
\newcommand{\knotlinewidth}{1pt}
\newcommand{\ssim}{\sim_s}
\newcommand{\Proj}{\mathrm{Proj}}
\tikzset{overcross/.style={double, line width=1.5, white, double=black, double distance=\knotlinewidth}}
\title{The Deligne Complex for the $B_3$ Artin Group}
\author{Katherine Goldman, Amy Herron}
\begin{document}

\begin{abstract}
We show that the piecewise Euclidean Moussong metric on the Deligne complex of the Artin group of type $B_3$ is $\mathrm{CAT}(0)$. We do this by establishing a criteria for a complex made of $B_3$ simplices to be $\mathrm{CAT}(1)$ in terms of embedded edge paths, which in particular applies to the spherical Deligne complex of type $B_3$. This provides one more step to showing that the Moussong metric is $\mathrm{CAT}(0)$ for any 3-dimensional Artin group.  
\end{abstract}

\maketitle
\tableofcontents

\section{Introduction}

Artin groups are closely tied to Coxeter groups. 
While Coxeter groups are largely well-understood, there are still a plethora of open questions regarding the Artin groups. 
One of the most prominent of these is the $K(\pi,1)$ conjecture, which essentially asks if a specific space is an Eilenberg-MacLane space for a given Artin group. 
This turns out to be equivalent to asking if an object called the ``(modified) Deligne complex'' is contractible \cite{cd1995k}. 
One common technique in geometric group theory and metric geometry for showing spaces are contractible is to show that they possess some notion of non-positive curvature.
Recent success has been had using various new ideas of non-positive curvature \cite{paolini2021proof,haettel2023lattices,huang2024labeled}.
But it has long been conjectured that Deligne complexes are $\mathrm{CAT}(0)$ under a specific metric, typically called the Moussong metric, defined in terms of the Davis complex of the associated Coxeter group \cite[Conj.~3]{cd1995k}. Very few Artin groups are known to satisfy this; in fact, it is only known when the group is 2-dimensional \cite[Thm.~4]{cd1995k} or if the only ``spherical-type'' subdiagrams of its Coxeter-Dynkin diagram are either edges or $A_3$ \cite{charney2004deligne}.

It is notoriously difficult to show that piecewise spherical cell complexes of dimension $\geq 2$ are $\cat(1)$. 
Broadly, this is because it requires establishing that any loop of length $< 2\pi$ is not a (local) geodesic. In \cite{elder2002curvature}, a method is presented for showing arbitrary 2-dimensional piecewise spherical simplicial complexes are $\cat(1)$, via excluding certain ``galleries''. Although an explicit list of galleries is given, it is quite large, and generally unfeasible to apply to Artin groups directly.
In \cite{charney2004deligne}, this is overcome by cleverly using the geometry of $A_3$ simplices and the relationship of the Deligne complex with the Coxeter complex to reduce the study of all galleries to just a few which stay in the 1-skeleton. This is the inspiration for our current work.  

Our method for simplifying the study of general loops to edge paths is made somewhat more simple by using a slightly altered criteria developed by Bowditch for a locally $\cat(1)$ space to be $\cat(1)$. This criteria says that in order for a locally $\cat(1)$ space to be $\cat(1)$, all short loops must be ``shrinkable'' to the trivial loop. Since shrinkability is an equivalence relation, we are able to easily reduce to the edge paths by showing that an arbitrary short loop is shrinkable to a short closed edge path.

While we perform a similar reduction to edge paths as in the $A_3$ case, there are a number of other paths that now must be ruled out. 
To show that the spherical Deligne complex for the braid group on four strands is CAT(1), Charney needed to essentially consider two configurations of geodesic edge paths; one of edge length 4, and one of edge length 6.  
For the spherical Deligne complex of type $B_3$, one must consider paths of edge length 4, 6, 8, and 10.  
The geometry of the spherical Deligne complex of type $B_3$ also becomes much less friendly than type $A_3$, which requires the implementation of new tools to analyze these edge paths.  
We are able to confirm that all edge paths are shrinkable, and therefore, we add $B_3$ to the collection of Artin groups with $\cat(0)$ Deligne complex:

\begin{restatable*}{thm}{moussongiscatone} \label{cor:edgesareshrinkable}
    The (piecewise spherical) Moussong metric on the spherical Deligne complex of type $B_3$ is $\cat(1)$. 
\end{restatable*}

The most general setting in which this may be applied is the following.

\begin{restatable*}{thm}{generalcatzero} \label{thm:generalcat0}
    Suppose $A(\Gamma)$ is a 3-dimensional Artin group and $\Gamma$ contains no subdiagram of type $H_3$. Then the (piecewise Euclidean) Moussong metric on the Deligne complex for $A(\Gamma)$ is $\cat(0)$ and, in particular, $A(\Gamma)$ satisfies the $K(\pi,1)$ conjecture.
\end{restatable*}

\begin{figure}[!ht]

\begin{tikzpicture}[scale=0.49]
  \matrix[matrix of nodes,column sep=15pt,nodes={anchor=center, minimum height=0.1cm, align=flush center}]{
\begin{tikzpicture}
    \filldraw[] (45:1) circle(0.5mm) (135:1) circle(0.5mm) (225:1) circle(0.5mm) 
        (315:1) circle(0.5mm);
    \draw (45:1) -- (135:1) -- (225:1) -- (315:1) --cycle;
    \node at (90:0.9) {$4$};   
\end{tikzpicture}
&
\begin{tikzpicture}
    \filldraw[] (45:1) circle(0.5mm) (135:1) circle(0.5mm) (225:1) circle(0.5mm) 
        (315:1) circle(0.5mm);
    \draw (45:1) -- (135:1) -- (225:1) -- (315:1) --cycle;
    \node[label=above:$4$] at (-90:1.3) {};    
    \node[label=below:$4$] at (90:1.3) {};    
\end{tikzpicture}
&
\begin{tikzpicture}
    \filldraw[] (45:1) circle(0.5mm) (135:1) circle(0.5mm) (225:1) circle(0.5mm) 
        (315:1) circle(0.5mm);
    \draw (45:1) -- (135:1) -- (225:1) -- (315:1) --cycle;
    \node[label=left:$4$] at (0:1.25) {};    
    \node[label=below:$4$] at (90:1.3) {};    
\end{tikzpicture}
\\
\begin{tikzpicture}
    \filldraw[] (45:1) circle(0.5mm) (135:1) circle(0.5mm) (225:1) circle(0.5mm) 
        (315:1) circle(0.5mm) (45:2) circle(0.5mm) (225:2) circle(0.5mm);
    \draw (45:1) -- (135:1) -- (225:1) -- (315:1) --cycle;
    \draw (45:1) -- (45:2);
    \draw (225:1) -- (225:2);
    
    \node[label=right:$4$] at (180:1.25) {};    
    \node[label=below:$4$] at (90:1.3) {};    
\end{tikzpicture}
&
\begin{tikzpicture}
    \filldraw[] (180:1) circle(0.5mm) (0:0) circle(0.5mm) (45:1) circle(0.5mm) 
        (-45:1) circle(0.5mm);
    \draw (180:1) -- (0:0) -- (45:1);
    \draw (0:0) -- (-45:1);
    % \node[label=above:$4$] at (180:0.5) {};
    \node at (-0.5,0.2) {$4$};
\end{tikzpicture}
&
\begin{tikzpicture}
    \filldraw[] (180:1) circle(0.5mm) (0:0) circle(0.5mm) (0:1) circle(0.5mm) 
        (0:2) circle(0.5mm);
    \draw (180:1) -- (0:0) -- (0:1) -- (0:2);
    % \node[label=above:$4$] at (-180:0.5) {};    
    % \node[label=above:$4$] at (0:1.5) {};    
    \node at (-0.5,0.2) {$4$};
    \node at (1.5,0.2) {$4$};
\end{tikzpicture}
\\
}
;
\end{tikzpicture}
\caption{New Coxeter-Dynkin diagrams}
\label{fig:newexamples}
\end{figure}
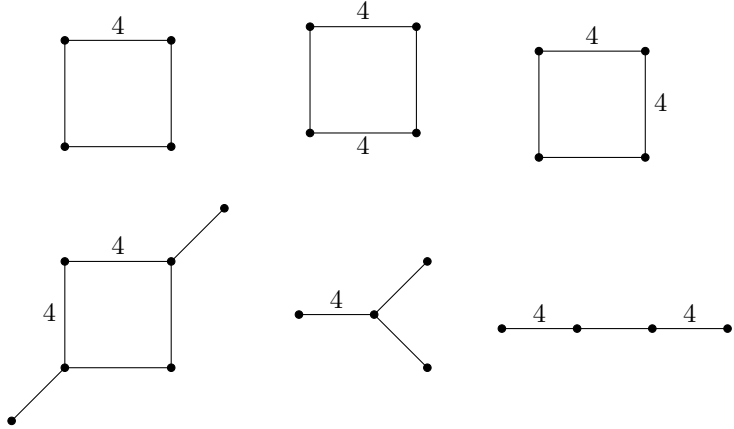

\begin{exmp}
    Some new examples of diagrams $\Gamma$ which give Artin groups $A(\Gamma)$ whose Deligne complexes are now known to have a $\cat(0)$ Moussong metric can be seen in Figure \ref{fig:newexamples}.
\end{exmp}

It turns out, in fact, that our method for showing this spherical Deligne complex is $\cat(1)$ may also be applied to a more general class of 2-dimensional simplicial complexes. For clarity, and for broader application, we have extracted the necessary criteria thusly. The full definitions are given in Section \ref{sec:ComCrit}.

\begin{restatable*}{thm}{catonecrit}
\label{thm:B3 CAT1 Criteria}
    Suppose $X$ is a $B_3$ simplicial complex satisfying each of the following conditions:
    \begin{enumerate}
        \item The link of every vertex is nonempty and connected.
        \item If $v$ is a vertex of type $\hat s_1$, then $lk(v,X)$ has girth at least $8$.
        \item If $v$ is a vertex of type $\hat s_2$, then $lk(v,X)$ is complete bipartite, partitioned along types of vertices, and contains at least one embedded 4-cycle.
        \item If $v$ is a vertex of type $\hat s_3$, then $lk(v,X)$ has girth at least $6$.
        \item Any path in the 1-skeleton of $X$ which passes through vertices of the type seen in Figure \ref{fig:checking edge loops} is contained in a subcomplex of $X$ of the corresponding type in Figure \ref{fig:filling the edge loops}.
        \item The 10-cycles of the form in Figure \ref{fig:bad 10 cycle} are not embedded.
    \end{enumerate}
    Then the $B_3$ metric on $X$ is $\cat(1)$.
\end{restatable*}

Following this,
in Section \ref{sec:Definitions}, we will discuss the necessary background and general definitions surrounding Coxeter and Artin groups, along with some of their respective complexes. 
Section \ref{sec:geometry} includes more specific properties of these complexes in the case of type $B_n$ that will be needed later in the paper, notably including a novel proof that the Artin group of type $B_n$ embeds into type $A_{2n-1}$ in a certain well-behaved way. The majority of this section is devoted to this new proof, which is done using mapping class groups. 
Section \ref{sec:shrinking} uses new techniques to show certain paths are shrinkable, implying the spherical Deligne complex of type $B_3$ satisfies the hypotheses of Theorem \ref{thm:B3 CAT1 Criteria}, completing the proof that the metric is $\cat(1)$.

\subsection*{Acknowledgements}

The authors would like to thank Jingyin Huang, Johanna Mangahas, and Piotr Przytycki for their invaluable input. They would also like to extend their gratitude to AIM for the opportunity to start this collaboration. The first author is supported by NSF DMS-2402105.  The second author received travel support from Simons Foundation (965204, JM).

\section{A combinatorial criteria for \texorpdfstring{$\cat(1)$ $B_3$}{CAT(1) B\_3} complexes}
\label{sec:ComCrit}

Let $\Delta$ denote a spherical 2-simplex on vertex set $\{\hat s_1, \hat s_2, \hat s_3\}$, where the internal angle at $\hat s_1$ is $\pi/4$, the angle at $\hat s_2$ is $\pi/2$, and the angle at $\hat s_3$ is $\pi/3$. We will call a pure 2-dimensional simplicial complex $X$ a \emph{$B_3$ (simplicial) complex} if for every 2-simplex $\sigma$ of $X$, there is an isomorphism $m_\sigma : \sigma \to \Delta$, and if for every vertex $v$ and 2-simplices $\sigma$ and $\tau$ containing $v$, we have $m_\sigma(v) = m_\tau(v)$. (In other words, the 1-skeleton of $X$ has a coloring by $\{\hat s_1, \hat s_2, \hat s_3\}$.) If $v$ is a vertex of a 2-simplex $\sigma$, we will call $m_\sigma(v)$ the \emph{type} of the vertex $v$. There is a canonical metric on $X$ obtained by pulling back the metric of $\Delta$ along the maps $m_\sigma$, which we call the $B_3$ metric on $X$. Our main theorem is the following.

\catonecrit

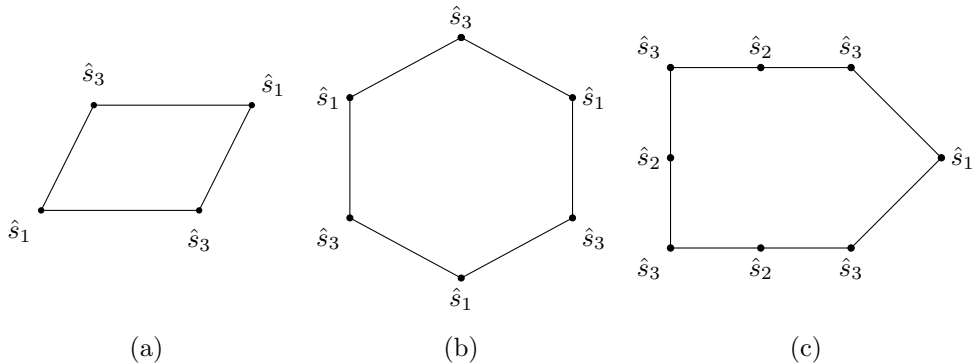
\begin{figure}[!ht]
\centering
\begin{tikzpicture}[scale=0.49]
  \matrix[matrix of nodes,column sep=-5pt,nodes={anchor=center, minimum height=0.1cm, align=flush center}]{
    \begin{tikzpicture}[scale=0.7]
        \coordinate (s3 top) at (1,2) ;
        \coordinate (s1 top) at (4,2);
        \coordinate (s1 bottom) at (0,0) ;
        \coordinate (s3 bottom) at (3,0) ;
        
        \draw (s3 top) -- (s1 top)  -- (s3 bottom) -- 
            (s1 bottom) -- (s3 top);

        \filldraw (s3 top) circle (0.05cm);
        \filldraw (s1 top) circle (0.05cm);
        \filldraw (s1 bottom) circle (0.05cm);
        \filldraw (s3 bottom) circle (0.05cm);

        \node[label={[label distance=0mm]above: {$\hat s_3$}}] 
            at (s3 top)  {} ;
        \node[above right] at (s1 top) {$\hat s_1$} ;
        \node[below left] at (s1 bottom) {$\hat s_1$} ;
        \node[label={[label distance=0mm]below: {$\hat s_3$}}] 
            at (s3 bottom)  {} ;       
    \end{tikzpicture}
    &
     \begin{tikzpicture}[scale=0.8]
    \coordinate (s1 top) at (0,2) ;
    \coordinate (s1 right) at (1.85,-1);
    \coordinate (s1 left) at (-1.85,-1) ;
    \coordinate (s2 bottom) at (0,-2) ;
    \coordinate (s2 right) at (1.85,1) ;
    \coordinate (s2 left) at (-1.85,1) ;
    
    \draw (s1 top) -- (s2 right)  -- (s1 right) -- 
        (s2 bottom) -- (s1 left) -- (s2 left) -- (s1 top);

    \filldraw (s1 top) circle (0.05cm);
    \filldraw (s2 right) circle (0.05cm);
    \filldraw (s1 right) circle (0.05cm);
    \filldraw (s2 bottom) circle (0.05cm);
    \filldraw (s1 left) circle (0.05cm);
    \filldraw (s2 left) circle (0.05cm);
    \filldraw (s1 top) circle (0.05cm);

    \node[above] at (s1 top) {$\hat s_3$} ;
    \node[right] at (s2 right) {$\hat s_1$} ;
    \node[below right] at (s1 right) {$\hat s_3$} ;
    \node[below] at (s2 bottom)  {$\hat s_1$} ;
    \node[below left] at (s1 left) {$\hat s_3$} ;
    \node[left] at (s2 left) {$\hat s_1$} ;
    \end{tikzpicture}
    &
     \begin{tikzpicture}[scale=1.2]
        \coordinate (s31) at (0,0) ;
        \coordinate (s21) at (1,0) ;
        \coordinate (s32) at (2,0);
        \coordinate (s1) at (3,-1) ;
        \coordinate (s33) at (2,-2) ;
        \coordinate (s22) at (1,-2) ;
        \coordinate (s34) at (0,-2) ;
        \coordinate (s23) at (0,-1) ;
        
        \filldraw (s31) circle (0.035cm);
        \filldraw (s21) circle (0.035cm);
        \filldraw (s32) circle (0.035cm);
        \filldraw (s1)  circle (0.035cm);
        \filldraw (s33) circle (0.035cm);
        \filldraw (s22) circle (0.035cm);
        \filldraw (s34) circle (0.035cm);
        \filldraw (s23) circle (0.035cm);

        \draw (s31) -- (s21)  -- (s32) -- (s1) --(s33) -- (s22) -- (s34)  -- (s23) -- (s31);

        \node[above left] at (s31) {$\hat s_3$} ;
        \node[above] at (s21) {$\hat s_2$} ;
        \node[above] at (s32) {$\hat s_3$} ;
        \node[right] at (s1)  {$\hat s_1$} ;
        \node[below] at (s33) {$\hat s_3$} ;
        \node[below] at (s22) {$\hat s_2$} ;
        \node[below left] at (s34) {$\hat s_3$} ;
        \node[left] at (s23) {$\hat s_2$} ;
        \end{tikzpicture}
    \\
    (a) & (b) & (c)
    \\
    };
    \end{tikzpicture}
    
    \caption{Some short closed edge paths}
    \label{fig:checking edge loops}
\end{figure}

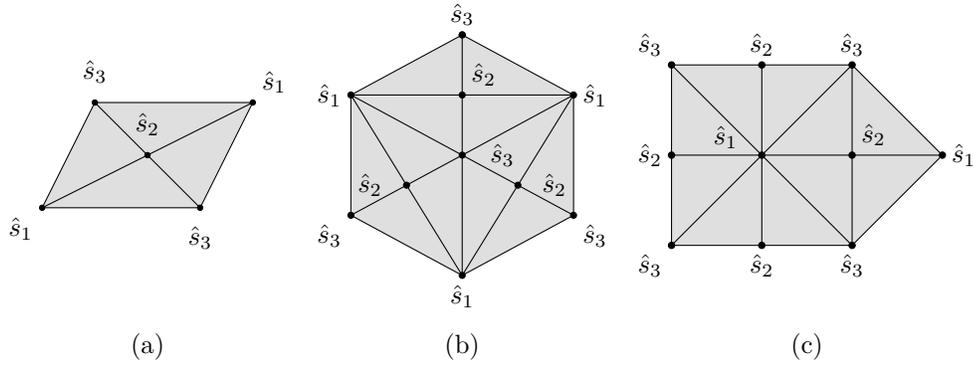
\begin{figure}[!ht]
\centering
\begin{tikzpicture}[scale=0.49]
  \matrix[matrix of nodes,column sep=-5pt,nodes={anchor=center, minimum height=0.1cm, align=flush center}]{
    \begin{tikzpicture}[scale=0.7]
        \coordinate (s3 top) at (1,2) ;
        \coordinate (s1 top) at (4,2);
        \coordinate (s1 bottom) at (0,0) ;
        \coordinate (s3 bottom) at (3,0) ;
        
        \draw[fill=gray!25] (s3 top) -- (s1 top)  -- (s3 bottom) -- 
            (s1 bottom) -- (s3 top);
        \draw[name path=A] (s3 top) -- (s3 bottom);
        \draw[name path=B] (s1 top) -- (s1 bottom);
        
        \path [name intersections={of=A and B,by=s2 middle}];

        \filldraw (s3 top) circle (0.05cm);
        \filldraw (s1 top) circle (0.05cm);
        \filldraw (s1 bottom) circle (0.05cm);
        \filldraw (s3 bottom) circle (0.05cm);
        \filldraw (s2 middle) circle (0.05cm);

        \node[label={[label distance=0mm]above: {$\hat s_3$}}] 
            at (s3 top)  {} ;
        \node[above right] at (s1 top) {$\hat s_1$} ;
        \node[below left] at (s1 bottom) {$\hat s_1$} ;
        \node[label={[label distance=0mm]below: {$\hat s_3$}}] 
            at (s3 bottom)  {} ; 
        \node[label={[label distance=0mm]90:{$\hat s_2$}}] 
            at (s2 middle) {} ;  
    \end{tikzpicture}
    &
    \begin{tikzpicture}[scale=0.8]
    \coordinate (s1 top) at (0,2) ;
    \coordinate (s1 right) at (1.85,-1);
    \coordinate (s1 left) at (-1.85,-1) ;
    \coordinate (s2 bottom) at (0,-2) ;
    \coordinate (s2 right) at (1.85,1) ;
    \coordinate (s2 left) at (-1.85,1) ;
    \coordinate (center) at (0,0);
    \coordinate (new top) at (0,1);
    \coordinate (new left) at (-0.925,-0.5);
    \coordinate (new right) at (0.925,-0.5);
    
    \draw[fill=gray!25] (s1 top) -- (s2 right)  -- (s1 right) -- 
        (s2 bottom) -- (s1 left) -- (s2 left) -- (s1 top);

    \draw (s1 top) -- (s2 bottom);
    \draw (s1 right) -- (s2 left);
    \draw (s1 left) -- (s2 right);
    \draw (s2 left) -- (s2 right);
    \draw (s2 left) -- (s2 bottom);
    \draw (s2 right) -- (s2 bottom);

    \filldraw (s1 top) circle (0.05cm);
    \filldraw (s2 right) circle (0.05cm);
    \filldraw (s1 right) circle (0.05cm);
    \filldraw (s2 bottom) circle (0.05cm);
    \filldraw (s1 left) circle (0.05cm);
    \filldraw (s2 left) circle (0.05cm);
    \filldraw (s1 top) circle (0.05cm);
    \filldraw (center) circle (0.05cm);
    \filldraw (new top) circle (0.05cm);
    \filldraw (new left) circle (0.05cm);
    \filldraw (new right) circle (0.05cm);

    \node[above] at (s1 top) {$\hat s_3$} ;
    \node[right] at (s2 right) {$\hat s_1$} ;
    \node[below right] at (s1 right) {$\hat s_3$} ;
    \node[below] at (s2 bottom)  {$\hat s_1$} ;
    \node[below left] at (s1 left) {$\hat s_3$} ;
    \node[left] at (s2 left) {$\hat s_1$} ;
    \node[right=0.25] at (center) {$\hat s_3$} ;
    \node[above right] at (new top) {$\hat s_2$};
    \node[right=0.2] at (new right) {$\hat s_2$};
    \node[left=0.2] at (new left) {$\hat s_2$};
    \end{tikzpicture}
    &
    \begin{tikzpicture}[scale=1.2]
        \coordinate (s31) at (0,0) ;
        \coordinate (s21) at (1,0) ;
        \coordinate (s32) at (2,0);
        \coordinate (s1) at (3,-1) ;
        \coordinate (s33) at (2,-2) ;
        \coordinate (s22) at (1,-2) ;
        \coordinate (s34) at (0,-2) ;
        \coordinate (s23) at (0,-1) ;

        \coordinate (s11) at (1,-1) ;
        \coordinate (s24) at (2,-1) ;
        
        \draw[fill=gray!25] (s31) -- (s21)  -- (s32) -- (s1) --(s33) -- (s22) -- (s34)  -- (s23) -- (s31);
        
        \filldraw (s31) circle (0.035cm);
        \filldraw (s21) circle (0.035cm);
        \filldraw (s32) circle (0.035cm);
        \filldraw (s1)  circle (0.035cm);
        \filldraw (s33) circle (0.035cm);
        \filldraw (s22) circle (0.035cm);
        \filldraw (s34) circle (0.035cm);
        \filldraw (s23) circle (0.035cm);
        \filldraw (s24) circle (0.035cm);
        \filldraw (s11) circle (0.035cm);

        \draw (s31) -- (s33);
        \draw (s32) -- (s34);
        \draw (s32) -- (s33);
        \draw (s21) -- (s22);
        \draw (s23) -- (s1);

        \node[above left] at (s31) {$\hat s_3$} ;
        \node[above] at (s21) {$\hat s_{2}$} ;
        \node[above] at (s32) {$\hat s_3$} ;
        \node[right] at (s1)  {$\hat s_1$} ;
        \node[below] at (s33) {$\hat s_3$} ;
        \node[below] at (s22) {$\hat s_{2}$} ;
        \node[below left] at (s34) {$\hat s_3$} ;
        \node[left] at (s23) {$\hat s_2$} ;
        \node[above right] at (s24) {$\hat s_{2}$} ;
        \node at (0.6,-0.8) {$\hat s_1$};
     \end{tikzpicture}
    \\
    (a) & (b) & (c)
    \\
    };
    \end{tikzpicture}
    
    \caption{Filled edge paths}
    \label{fig:filling the edge loops}
\end{figure}

\begin{figure}[!ht]
\centering
    \begin{tikzpicture}
    \coordinate (s31) at (-1,0   );
    \coordinate (s32) at ( 1,0   );
    \coordinate (s33) at ( 2,2   );
    \coordinate (s34) at ( 0,3.75);
    \coordinate (s35) at (-2,2   );

    \filldraw (s31) circle (0.05cm);
    \filldraw (s32) circle (0.05cm);
    \filldraw (s33) circle (0.05cm);
    \filldraw (s34) circle (0.05cm);
    \filldraw (s35) circle (0.05cm);

    \coordinate (s21) at (0,0);
    \coordinate (s22) at (1.5,1);
    \coordinate (s23) at (1,2.875);
    \coordinate (s24) at (-1,2.875);
    \coordinate (s25) at (-1.5,1);

    \filldraw (s21) circle (0.05cm);
    \filldraw (s22) circle (0.05cm);
    \filldraw (s23) circle (0.05cm);
    \filldraw (s24) circle (0.05cm);
    \filldraw (s25) circle (0.05cm);

    \draw (s31) --(s32) --(s33) --(s34) --(s35) -- (s31);

    \node[below] at (s31) {$\hat s_3$};
    \node[below] at (s32) {$\hat s_3$};
    \node[right] at (s33) {$\hat s_3$};
    \node[above] at (s34) {$\hat s_3$};
    \node[left ] at (s35) {$\hat s_3$};

    \node[below] at (s21) {$\hat s_2$};
    \node[below right] at (s22) {$\hat s_2$};
    \node[above right] at (s23) {$\hat s_2$};
    \node[above left ] at (s24) {$\hat s_2$};
    \node[below left ] at (s25) {$\hat s_2$};
\end{tikzpicture}
    \caption{A non-admissible loop}
    \label{fig:bad 10 cycle}
\end{figure}
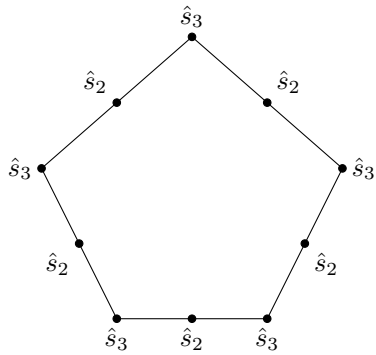

In order to show that these complexes are $\cat(1)$, we utilize the following criteria of Bowditch \cite[\S 3]{bowditch1995notes}

\begin{prop} \label{prop:bowditchcriteria}
    If $X$ is locally $\cat(1)$, then $X$ is $\cat(1)$ if and only if every rectifiable loop of length $< 2\pi$ is ``shrinkable''.
\end{prop}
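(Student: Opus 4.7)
The plan is to handle the two directions separately, with the reverse direction being the substantive one. For the forward direction, assume $X$ is $\cat(1)$. Any rectifiable loop $\gamma$ of length less than $2\pi$ has diameter strictly less than $\pi$, so every pair of points on $\gamma$ is joined by a unique geodesic, and these geodesics vary continuously with their endpoints (a standard feature of $\cat(1)$ spaces below the diameter threshold $\pi$). Fixing any $p \in \gamma$ and defining $H(t,x)$ to be the point at parameter $t$ along the unique geodesic from $p$ to $\gamma(x)$ gives a homotopy shrinking $\gamma$ to $p$; convexity of the distance function in $\cat(1)$ spaces bounds the length of every intermediate loop in terms of the length of $\gamma$, which should supply whatever quantitative control Bowditch's notion of ``shrinkable'' demands.

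For the reverse direction, assume $X$ is locally $\cat(1)$ and every rectifiable loop of length less than $2\pi$ is shrinkable. The goal is to verify the global $\cat(1)$ four-point inequality, or equivalently that every geodesic triangle of perimeter less than $2\pi$ satisfies the spherical comparison. Given such a triangle $T$, I would first invoke local $\cat(1)$ together with compactness of $T$ to cover the image of a shrinking disk for $T$ by finitely many convex balls in which the $\cat(1)$ comparison holds pointwise. The shrinking homotopy of the loop underlying $T$ (possibly concatenated with a comparison arc to a fourth point) then produces a singular disk-filling which, after refinement, decomposes into sub-regions each contained in one of the local $\cat(1)$ balls. An inductive gluing of local comparison maps, in the spirit of Reshetnyak's majorization, assembles these into a global comparison map realizing the desired inequality for $T$.

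The main obstacle is the precise bookkeeping afforded by the technical definition of ``shrinkable'': Bowditch's formulation is calibrated so that the shrinking homotopy neither exits controlled regions nor produces intermediate loops too long to trigger the local $\cat(1)$ hypothesis. Subordinating a triangulation of the homotopy disk to the local cover, and propagating the comparison inductively across adjacent small triangles without losing the length control needed at each step, is where the real technical weight lies. Once that propagation is set up, the comparison for the original triangle follows by induction on the number of simplices in the triangulation, and the global $\cat(1)$ inequality falls out in the limit as the cover is refined.
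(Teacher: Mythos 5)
The paper does not prove this proposition at all; it is quoted directly from Bowditch \cite[\S 3]{bowditch1995notes} as an external result, so there is no internal argument to compare yours against. Judged on its own merits, your forward direction is essentially correct: coning a loop of length $<2\pi$ (hence diameter $<\pi$) to a basepoint along unique geodesics, with convexity of the metric controlling the lengths of the intermediate loops, is the standard argument and does produce a homotopy through loops of length $<2\pi$.

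The reverse direction, however, has a genuine gap: it is the entire content of the theorem, and your sketch defers exactly the steps that make it a theorem. You propose to take the shrinking homotopy of (the boundary of) a triangle, triangulate the resulting singular disk subordinate to a cover by $\cat(1)$ balls, and glue local comparison maps ``in the spirit of Reshetnyak's majorization.'' But a free homotopy through short loops carries no metric control whatsoever on its interior: the intermediate loops need not be rectifiable with uniformly bounded length away from the $2\pi$ threshold in a way compatible with your cover, the sub-regions of a refinement need not be geodesic triangles (Reshetnyak gluing and the Alexandrov patchwork require geodesic triangles with perimeter below the comparison threshold, not arbitrary small singular pieces), and there is no mechanism in your outline forcing the locally defined comparison maps to agree on overlaps so as to assemble into a single majorizing map. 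You acknowledge this yourself when you write that the propagation step ``is where the real technical weight lies'' --- but that step is the proof. Bowditch's actual argument is structured quite differently: one works in the space of short loops, uses a curve-shortening/limiting process (as in Proposition \ref{prop:maintool} of this paper, which is modeled on it) to show that a non-shrinkable short loop yields a closed local geodesic of length $<2\pi$, and separately shows that the absence of such geodesics forces the global comparison inequality. As written, your reverse direction is a plan, not a proof.
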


We will call a rectifiable path \emph{short} if it has length $< 2\pi$. A loop is a closed path.
We say that rectifiable loops $\alpha$ and $\beta$ are \emph{$2\pi$-homotopic} (or \emph{short homotopic}) and write $\alpha \ssim \beta$ if there is a (free) homotopy from $\alpha$ to $\beta$ through short loops (i.e., each loop in the homotopy is a short rectifiable loop). If $\gamma \ssim 0$ (where $0$ is any constant loop), we say that $\gamma$ is \emph{shrinkable}. 

The conditions (2), (3), and (4) guarantee that the $B_3$ metric on $X$ is locally $\cat(1)$, so we only need to show that short loops are shrinkable. 
In order to show that there are no unshrinkable short loops in $X$, we first show that an arbitrary short loop is short homotopic to a loop remaining in the 1-skeleton of $X$ (an ``edge loop''), then show that all short edge loops are short homotopic to one of the loops in Figure \ref{fig:checking edge loops} or Figure \ref{fig:bad 10 cycle}. Then conditions (5) and (6) guarantee these loops are shrinkable. 

For the rest of Section \ref{sec:ComCrit}, we assume $X$ is a $B_3$ simplicial complex endowed with its $B_3$ metric, satisfying the hypotheses of Theorem \ref{thm:B3 CAT1 Criteria}.

Before continuing, we provide a quick observation.

\begin{lem}
    $X$ is a flag complex.
\end{lem}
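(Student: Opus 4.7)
The plan is to reduce flagness to a statement about triangles in the 1-skeleton and then exploit condition (3) of Theorem \ref{thm:B3 CAT1 Criteria}. Since $X$ is 2-dimensional, flagness amounts to showing that any three pairwise adjacent vertices span a 2-simplex of $X$.

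First I would observe that any edge of $X$ joins vertices of two distinct types. Since $X$ is pure 2-dimensional, each edge lies in some 2-simplex $\sigma$, and the isomorphism $m_\sigma : \sigma \to \Delta$ sends distinct vertices of $\sigma$ to the three distinctly labeled vertices $\hat s_1, \hat s_2, \hat s_3$ of $\Delta$. Consequently, the 1-skeleton of $X$ is properly 3-colored by type, and any triangle must use each of the three types exactly once.

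Given such a triangle $\{v_1, v_2, v_3\}$ with $v_i$ of type $\hat s_i$, I would pass to $lk(v_2, X)$. Both $v_1$ and $v_3$ appear as vertices of this link (of types $\hat s_1$ and $\hat s_3$, respectively). By condition (3), this link is complete bipartite with bipartition along types, so $v_1$ and $v_3$ lie in opposite parts and must be joined by an edge in $lk(v_2, X)$. Translating back to $X$, this edge is precisely the data required to conclude that $\{v_1, v_2, v_3\}$ bounds a 2-simplex of $X$.

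There is no real obstacle here; the argument is a quick consequence of purity together with condition (3). It is worth noting that conditions (2) and (4) play no role in this lemma -- they are girth bounds on the $\hat s_1$- and $\hat s_3$-links, which is a statement about the \emph{absence} of small cycles, whereas flagness requires the \emph{presence} of filling simplices, which is supplied exclusively by the bipartiteness condition on the $\hat s_2$-links.
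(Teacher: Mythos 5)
Your proof is correct and is essentially the paper's own argument: purity gives the proper $3$-coloring of the $1$-skeleton, and completeness of the bipartite $\hat s_2$-link supplies the filling $2$-simplex for any triangle. The only cosmetic difference is that the paper explicitly notes that cliques of size $>3$ cannot occur because there are only three types, whereas in your write-up this is implicit in the $3$-coloring observation.
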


\begin{proof}
    Let $\{v_1,v_2,v_3\}$ be a set of vertices of $X$ which are pairwise adjacent, with $e_{ij}$ the edge between $v_i$ and $v_j$. Since $X$ is a pure 2-dimensional complex, each $e_{ij}$ is contained in a 2-simplex $\sigma_{ij}$. By considering the maps $m_{\sigma_{ij}}$, each of $v_1$, $v_2$, and $v_3$ must have distinct types. Without loss of generality, suppose $v_i$ has type $\hat s_i$ for each $i$. Then since $lk(v_2,X)$ is complete bipartite, there is an edge between the points in the link representing $v_1$ and $v_3$; this edge represents the fact that $\{v_1,v_2,v_3\}$ spans a simplex.

    Suppose $\{v_1,v_2,v_3, \dots v_n\}$ is a set of pairwise adjacent vertices. By similar reasoning to above, each of these vertices must have distinct labels. However, there are only $3$ labels, so we cannot have $n > 3$. 
\end{proof}

\subsection{Shrinking to edge loops}

We now perform the reduction from arbitrary short loops to loops which stay in the 1-skeleton. 

The proof of the following lemma is identical to the proof of \cite[Lemma 5.5]{goldman2023cat0} given by the first author.

\begin{lem} \label{lem:intersectionofstars}
    Suppose $v_1,v_2 \in X$ are distinct vertices of type $\hat s_2$. Then $St(v_1) \cap St(v_2)$ is either empty, a single vertex, or a single (closed) edge.
\end{lem}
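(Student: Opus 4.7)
The plan is to determine the simplicial structure of $St(v_1) \cap St(v_2)$ directly, using the vertex coloring and the link conditions from Theorem \ref{thm:B3 CAT1 Criteria}. The first observation would be that since $v_1$ and $v_2$ both have type $\hat s_2$ and the $1$-skeleton of $X$ is properly colored (as in the preceding flag lemma), they are non-adjacent; so any $2$-simplex contained in $St(v_1) \cap St(v_2)$ would have to contain both $v_i$, which is impossible. Hence $St(v_1) \cap St(v_2)$ is a subcomplex of the $1$-skeleton of $X$.

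I would then identify the vertices and edges of this subcomplex in terms of links. Its vertex set is exactly $lk(v_1) \cap lk(v_2)$, consisting solely of vertices of types $\hat s_1$ and $\hat s_3$. For an edge $e$ of $X$, membership in $St(v_i)$ amounts to $\{v_i\} \cup e$ spanning a $2$-simplex (since $v_i \notin e$), which by flagness is equivalent to both endpoints of $e$ lying in $lk(v_i)$. Combined with condition (3), which makes each $lk(v_i, X)$ complete bipartite between its $\hat s_1$ and $\hat s_3$ parts, this shows that $St(v_1) \cap St(v_2)$ is the complete bipartite graph on $A := \{\hat s_1\text{-vertices in } lk(v_1) \cap lk(v_2)\}$ and $B := \{\hat s_3\text{-vertices in } lk(v_1) \cap lk(v_2)\}$.

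It then suffices to show that $|A| \le 1$, $|B| \le 1$, with at most one of them strictly positive. If $|A| \ge 1$ and $|B| \ge 2$, picking $a \in A$ and distinct $b_1, b_2 \in B$, the vertices $v_1, b_1, v_2, b_2$ form an embedded $4$-cycle in the bipartite graph $lk(a, X)$, contradicting the girth bound from condition (2). The mirror case $|A| \ge 2$, $|B| \ge 1$ is ruled out symmetrically by condition (4). So whenever $A$ and $B$ are both nonempty, the intersection is a single edge.

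The main obstacle is the remaining scenario in which one of $A, B$ is empty while the other has size $\ge 2$. Say $|A| \ge 2$ and $B = \emptyset$, with distinct $a_1, a_2 \in A$. My plan is to exploit the connectivity of $lk(a_1, X)$ from condition (1): since the $\hat s_3$-neighborhoods of $v_1$ and $v_2$ in $lk(a_1)$ are nonempty (both $lk(v_i)$ contain at least two $\hat s_3$ vertices by condition (3)) and disjoint (as $B = \emptyset$), any path between $v_1$ and $v_2$ in $lk(a_1)$ must introduce an auxiliary $\hat s_2$-vertex $v'$ adjacent to $a_1$, to some $b \in lk(v_1)$, and to some $c \in lk(v_2)$. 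I would then use the complete bipartite structure of $lk(v', X)$ from condition (3) to propagate this adjacency, aiming to produce either the edge $\{v', a_2\}$ in $X$ (which yields the forbidden $4$-cycle $v_1$--$a_1$--$v'$--$a_2$--$v_1$ in $lk(b, X)$, contradicting condition (4)) or an analogous short cycle via the symmetric auxiliary vertex in $lk(a_2, X)$. This last case analysis is the delicate part and mirrors the argument in \cite[Lemma 5.5]{goldman2023cat0}.
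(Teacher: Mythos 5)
Your reduction of $St(v_1)\cap St(v_2)$ to the complete bipartite graph on $A\sqcup B$ is correct, and your disposal of the cases $|A|\ge 1,\,|B|\ge 2$ and $|A|\ge 2,\,|B|\ge 1$ via embedded $4$-cycles in $lk(a,X)$ (girth $\ge 8$ by condition (2)) and $lk(b,X)$ (girth $\ge 6$ by condition (4)) is sound. (The paper itself only cites \cite[Lemma 5.5]{goldman2023cat0} rather than reproving the lemma, so the comparison below is against what the hypotheses actually support.)

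The gap is in the remaining case $|A|\ge 2$, $B=\varnothing$ (and its mirror). Your connectivity argument does not go through: a path from $v_1$ to $v_2$ in $lk(a_1,X)$ may be arbitrarily long, so there is no reason its intermediate $\hat s_2$-vertices are adjacent to anything in $lk(v_2)$ --- the claim that "any path \dots must introduce $v'$ adjacent to $a_1$, to some $b\in lk(v_1)$, and to some $c\in lk(v_2)$" only holds for a path of length exactly $4$, and girth bounds give no such distance bound. Moreover, even granting such a $v'$, the complete bipartite structure of $lk(v',X)$ only relates existing $\hat s_1$- and $\hat s_3$-neighbors of $v'$; it cannot "propagate" to produce the edge $\{v',a_2\}$. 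The missing ingredient is condition (5). Choose $\hat s_3$-vertices $b\in lk(v_1)$ and $c\in lk(v_2)$ (these exist by condition (3), and $b\ne c$ since $B=\varnothing$); since each $lk(v_i)$ is complete bipartite, $a_1\!-\!b\!-\!a_2\!-\!c\!-\!a_1$ is an embedded $4$-cycle of the type in Figure \ref{fig:checking edge loops}(a), so condition (5) fills it with an $\hat s_2$-vertex $v_3$ adjacent to all four. One checks $v_3\ne v_1$ (else $c\in lk(v_1)\cap lk(v_2)$, contradicting $B=\varnothing$) and $v_3\ne v_2$; then $v_1\!-\!a_1\!-\!v_3\!-\!a_2\!-\!v_1$ is an embedded $4$-cycle in $lk(b,X)$, contradicting condition (4). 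The case $|B|\ge 2$, $A=\varnothing$ is symmetric, contradicting condition (2) instead. You should replace your final paragraph with this argument.
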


Let $C(B_3)$ denote the unit 2-sphere with its standard tessellation by copies of the $B_3$ simplex $\Delta$, along with the vertex labeling induced by $\Delta$. 
Important to the forthcoming proofs is the notion of \emph{developing} a geodesic from $X$ to $C(B_3)$ \cite{charney2004deligne}. The notion of developing in the context of locally $\cat(1)$ spaces originated in \cite{elder2002curvature}. Charney was able to simplify this method by developing these geodesics onto the spherical Coxeter complex rather than an arbitrary 2-sphere.
 
Suppose $\gamma$ is a local geodesic of $X$ which does not intersect any vertices. Then the sequence of 2-simplices which intersect $\gamma$ can be mapped one-by-one to a sequence of adjacent (and distinct) 2-simplices of $C(B_3)$ such that the vertex labeling is preserved. Since adjacency is preserved, these maps may be glued to form a locally injective map from a subcomplex of $X$ onto a subcomplex of $C(B_3)$. The image of $\gamma$ under this map is a local geodesic $\overline \gamma$ in $C(B_3)$, called the \emph{development of $\gamma$ (onto $C(B_3)$)}. %

\begin{figure}[!ht]
    \centering
    \begin{tikzpicture}
        \coordinate (A) at (0,0);
        \coordinate (B) at (2,0);
        \coordinate (C) at (0,2);
        \coordinate (D) at (2,2);
        \coordinate (E) at (1,1);

        \draw[fill=gray!25] (A) to[bend right] (B) to[bend right] (D) to[bend right] (C) to[bend right] (A);

        \draw (E) -- (A);
        \draw (E) -- (B);
        \draw (E) -- (C);
        \draw (E) -- (D);

        \filldraw (A) circle (0.05cm);
        \filldraw (B) circle (0.05cm);
        \filldraw (C) circle (0.05cm);
        \filldraw (D) circle (0.05cm);
        \filldraw (E) circle (0.05cm);

        \node[below left] at (A) {$\hat s_1$};
        \node[above right] at (D) {$\hat s_1$};
        \node[above left] at (C) {$\hat s_3$};
        \node[below right] at (B) {$\hat s_3$};

        \node[right=0.2cm] at (E) {$\hat s_2$};

        \coordinate (G1) at (1.1,2.285);
        \coordinate (G2) at (1.15,-0.29);
        \draw[line width=0.05cm, color=red] (G1) to[in=100,out=-100] (G2);

        \node[color=red] at (0.8,1.75) {$\gamma$};
    \end{tikzpicture}
    \caption{Developing a geodesic through a vertex of type $\hat s_2$}
    \label{fig:quadofB}
\end{figure}
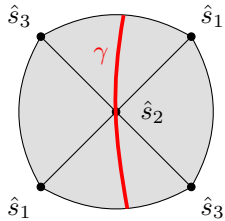

We may also develop local geodesics which intersect vertices of type $\hat s_2$ in the following way. 
If $\gamma$ is a local geodesic which passes through a vertex $v$ of type $\hat s_2$, then it intersects the $\varepsilon$-sphere of $v$ at two points which are distance at least $\pi$ apart in the round/unit/link metric on the $\varepsilon$-sphere. 
By its definition, this sphere is isometric to the link of $v$, so the points on the sphere correspond to points in the link which are distance no less than $\pi$ apart.
Since the link is complete bipartite and the lengths of edges are $\pi/2$, the diameter of the link is precisely $\pi$, so these points are exactly distance $\pi$ apart.
Moreover, these points are contained in a loop of edge length $4$, corresponding to four 2-simplices of $X$ which form a quadrilateral as in Figure \ref{fig:quadofB}. These simplices can then be mapped down to $C(B_3)$, and the image $\overline \gamma$ of $\gamma$ will still be locally geodesic at the image of $v$.
Because we can develop over a vertex of type $\hat s_2$ but not $\hat s_1$ or $\hat s_3$ in general, we will call vertices of type $\hat s_2$ \emph{nonsingular} and vertices of type $\hat s_1$ and $\hat s_3$ \emph{singular}.

\begin{lem} \label{lem:noverteximplieslength}
    If $\gamma$ is a nontrivial short closed local geodesic which intersects no singular vertices, then $\ell(\gamma) \geq \pi$.
\end{lem}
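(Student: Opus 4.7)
The proof is by contradiction, using the developing map defined in the paragraphs preceding the lemma. Assume $\ell(\gamma) < \pi$. Since $\gamma$ avoids singular vertices, its development $\bar\gamma$ onto $C(B_3) \cong S^2$ is a local geodesic, and therefore an arc of a great circle of length $\ell(\gamma)$. The closedness of $\gamma$ (with matching initial and terminal tangent vectors) gives a label-preserving simplicial isomorphism between the initial and terminal simplices of the development, which extends uniquely to an element $g \in W(B_3)$. The matching of tangent vectors forces $g$ to preserve $\bar\gamma$ as an oriented great circle, and hence to act on $\bar\gamma$ as a rotation by angle $\ell(\gamma) \pmod{2\pi}$. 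Since $W(B_3)$ is finite with rotation angles drawn from $\{0, \pi/2, 2\pi/3, \pi, 4\pi/3, 3\pi/2\}$, the assumption $0 < \ell(\gamma) < \pi$ forces $\ell(\gamma) \in \{\pi/2, 2\pi/3\}$.

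If $\ell(\gamma) = \pi/2$, the rotation axis of $g$ is a $4$-fold axis of $W(B_3)$ passing through $\hat s_1$ vertices of $C(B_3)$. The perpendicular great circle containing $\bar\gamma$ is then a wall of the Coxeter complex, composed of edges alternating between $\hat s_1$ and $\hat s_2$ vertices at spacing $\pi/4$. Any arc of length $\pi/2$ on this wall must contain an $\hat s_1$ vertex in its interior or among its endpoints, contradicting the assumption that $\gamma$ avoids singular vertices.

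The main obstacle is the case $\ell(\gamma) = 2\pi/3$, where $g$ is a $3$-fold rotation around an axis through $\hat s_3$ vertices of $C(B_3)$; the perpendicular great circle is not a wall but instead cuts transversely through the simplicial structure, passing only through $\hat s_2$ vertices at spacing $\pi/3$. An explicit computation in $C(B_3)$ shows that $\gamma$ traverses exactly two $\hat s_2$ vertices $v_1, v_2$ of $X$ and decomposes as two arcs of length $\pi/3$ between them, each crossing two simplices of $X$ that share a common $\hat s_1$--$\hat s_3$ edge. Denote by $(x_i, y_i)$ the $\hat s_1$ and $\hat s_3$ endpoints of this shared edge for arc $i \in \{1, 2\}$. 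The complete bipartite structure of $lk(v_j, X)$ (hypothesis (3) of Theorem \ref{thm:B3 CAT1 Criteria}) then forces all simplices $\{v_j, x_k, y_l\}$ with $j, k, l \in \{1, 2\}$ to exist in $X$. If $x_1 \neq x_2$, the link $lk(y_1, X)$ contains an embedded $4$-cycle $v_1 - x_1 - v_2 - x_2 - v_1$, violating hypothesis (4) (girth at least $6$); if $x_1 = x_2$ but $y_1 \neq y_2$, the link $lk(x_1, X)$ analogously contains an embedded $4$-cycle on vertices $v_1, y_1, v_2, y_2$, violating hypothesis (2) (girth at least $8$); the fully degenerate case $x_1 = x_2$ and $y_1 = y_2$ is excluded by invoking the $4$-cycle requirement of hypothesis (3) at $v_1$ to generate additional $\hat s_1$-- and $\hat s_3$-type vertices in $lk(v_1, X)$ and then rerunning the above girth argument at a singular vertex. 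In every case we obtain a contradiction, and hence $\ell(\gamma) \geq \pi$.
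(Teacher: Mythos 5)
Your holonomy argument is a genuinely different route from the paper's (which counts the stars of $\hat s_2$-vertices that $\gamma$ can meet and rules out the resulting three quadrilateral galleries using Lemma \ref{lem:intersectionofstars} and condition (5)). Most of your argument is sound: the development of $\gamma$ is an arc of a great circle $C$, the closing-up condition produces a type-preserving $g \in W(B_3)$ carrying the initial point and tangent to the terminal point and tangent, and hence $g$ preserves $C$ with its orientation and restricts to a rotation of $C$ by $\ell(\gamma)$. Your treatment of the $\pi/2$ and $2\pi/3$ cases is essentially correct.

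However, there is a genuine gap in the enumeration of possible angles. The element $g$ need not be orientation-preserving on $S^2$ (the development reverses orientation across each shared edge, so the holonomy of a gallery of odd length is improper), and an improper element can still act on its invariant great circle as a rotation. Concretely, $W(B_3)$ is the full octahedral group and contains the rotary reflections $-r$, where $r$ is a rotation by $\pm 2\pi/3$ about a $3$-fold axis; such an element acts on the great circle perpendicular to that axis as a rotation by $\pm 2\pi/3 + \pi$, i.e.\ by $\pi/3$ or $5\pi/3$. So the correct list of angles realizable on an invariant oriented great circle is $\{0,\pi/3,\pi/2,2\pi/3,\pi,4\pi/3,3\pi/2,5\pi/3\}$, and your case analysis omits $\ell(\gamma)=\pi/3$. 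That case does not survive, but it must be argued: the arc of length $\pi/3$ on the circle perpendicular to a $3$-fold axis meets at most one $\hat s_2$-vertex and crosses exactly two $2$-simplices sharing an $\hat s_1$--$\hat s_3$ edge, and closing up would force those two simplices (or two boundary edges of the quadrilateral of Figure \ref{fig:quadofB}) to coincide, contradicting simpliciality. A smaller issue: in your $2\pi/3$ analysis the ``fully degenerate'' sub-case $x_1=x_2$, $y_1=y_2$ is not really excluded by the argument you sketch; it is excluded more simply because it would force the two $2$-simplices on either side of $v_2$ to coincide, contradicting the fact that $\gamma$ is locally geodesic at $v_2$ (the entry and exit directions would then lie in a single edge of $lk(v_2)$, at distance $\le \pi/2 < \pi$).
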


\begin{figure}[!ht]
    \begin{tikzpicture}[scale=0.49]
     \matrix[matrix of nodes,column sep=-5pt,nodes={anchor=center, minimum height=0.1cm, align=flush center}]{
    \begin{tikzpicture}[scale=0.5]
        \coordinate (left s3 top) at (-1.3,1.5);
        \coordinate (left s1 left) at (-2.3,-0.5);
    
        \coordinate (middle s1 top) at (1,2) ;
        \coordinate (middle s3 left) at (0,0);
        \coordinate (middle s3 right) at (2,0) ;
        \coordinate (middle s1 bottom) at (1,-2) ;

        \coordinate (right s3 top) at (3.3,1.5) ;
        \coordinate (right s1 bottom) at (4.3,-0.5) ;

        \coordinate (left dashed) at (-1.8,0.5) ;
        \coordinate (right dashed) at (3.8,0.5) ;

        \draw (middle s1 top) -- (left s3 top) -- 
            (left s1 left) --(middle s3 left);
        
        \draw (middle s1 top) -- (middle s3 left)  -- (middle s1 bottom) -- 
            (middle s3 right) -- (middle s1 top);

        \draw (middle s1 top) -- (right s3 top) --
            (right s1 bottom) -- (middle s3 right);

        \draw[dashed] (left dashed) -- (right dashed);

        \filldraw (left s3 top) circle (0.05cm);
        \filldraw (left s1 left) circle (0.05cm);

        \filldraw (middle s1 top) circle (0.05cm);
        \filldraw (middle s3 left) circle (0.05cm);
        \filldraw (middle s3 right) circle (0.05cm);
        \filldraw (middle s1 bottom) circle (0.05cm);

        \filldraw (right s3 top) circle (0.05cm);
        \filldraw (right s1 bottom) circle (0.05cm);

        \node[above] at (left s3 top) {$\hat s_3$} ;
        \node[left] at (left s1 left) {$\hat s_1$} ;

        \node[label={ [label distance=0mm]above: {$\hat s_1$} }] 
            at (middle s1 top)  {} ;
        \node[below left] at (middle s3 left) {$\hat s_3$} ;
        \node[below right] at (middle s3 right) {$\hat s_3$} ;
        \node[label={[label distance=0mm]below: {$\hat s_1$}}] 
            at (middle s1 bottom)  {} ;          

        \node[above] at (right s3 top) {$\hat s_3$} ;
        \node[right] at (right s1 bottom) {$\hat s_1$} ;
        
    \end{tikzpicture}

    &

    \begin{tikzpicture}[scale=0.49]
        \coordinate (left s3 top) at (-1.3,1.5);
        \coordinate (left s1 left) at (-2.3,-0.5);
    
        \coordinate (middle s1 top) at (1,2) ;
        \coordinate (middle s3 left) at (0,0);
        \coordinate (middle s3 right) at (2,0) ;
        \coordinate (middle s1 bottom) at (1,-2) ;

        \coordinate (right s1 top) at (4.3,0.5) ;
        \coordinate (right s3 right) at (3.3,-1.5) ;

        \coordinate (left dashed) at (-1.55,1);
        \coordinate (right dashed) at (3.55,0-1);

        \draw (middle s1 top) -- (left s3 top) -- 
            (left s1 left) --(middle s3 left);
        
        \draw (middle s1 top) -- (middle s3 left)  -- (middle s1 bottom) -- 
            (middle s3 right) -- (middle s1 top);

        \draw (middle s3 right) -- (right s1 top) --
            (right s3 right) -- (middle s1 bottom);

        \draw[dashed] (left dashed) -- (right dashed);

        \filldraw (left s3 top) circle (0.05cm);
        \filldraw (left s1 left) circle (0.05cm);

        \filldraw (middle s1 top) circle (0.05cm);
        \filldraw (middle s3 left) circle (0.05cm);
        \filldraw (middle s3 right) circle (0.05cm);
        \filldraw (middle s1 bottom) circle (0.05cm);

        \filldraw (right s1 top) circle (0.05cm);
        \filldraw (right s3 right) circle (0.05cm);

        \node[above] at (left s3 top) {$\hat s_3$} ;
        \node[left] at (left s1 left) {$\hat s_1$} ;

        \node[label={[label distance=0mm]above: {$\hat s_1$}}] 
            at (middle s1 top)  {} ;
        \node[below left] at (middle s3 left) {$\hat s_3$} ;
        \node[above right] at (middle s3 right) {$\hat s_3$} ;
        \node[label={[label distance=0mm]below: {$\hat s_1$}}] 
            at (middle s1 bottom)  {} ;          

        \node[above] at (right s1 top) {$\hat s_1$} ;
        \node[right] at (right s3 right) {$\hat s_3$} ;
        
    \end{tikzpicture} 
    
    &

    \begin{tikzpicture}[scale=0.49]
        \coordinate (left s3 top) at (-1.3,1.5);
        \coordinate (left s1 left) at (-2.3,-0.5);
    
        \coordinate (middle s1 top) at (1,2) ;
        \coordinate (middle s3 left) at (0,0);
        \coordinate (middle s3 right) at (2,0) ;
        \coordinate (middle s1 bottom) at (1,-2) ;

        \coordinate (bottom s3 bottom) at (-1.1,-2) ;

        \draw (middle s1 top) -- (left s3 top) -- 
            (left s1 left) --(middle s3 left);
        
        \draw (middle s1 top) -- (middle s3 left)  -- (middle s1 bottom) -- 
            (middle s3 right) -- (middle s1 top);

        \draw (middle s1 bottom) -- (bottom s3 bottom) -- (left s1 left);

        \filldraw (left s3 top) circle (0.05cm);
        \filldraw (left s1 left) circle (0.05cm);

        \filldraw (middle s1 top) circle (0.05cm);
        \filldraw (middle s3 left) circle (0.05cm);
        \filldraw (middle s3 right) circle (0.05cm);
        \filldraw (middle s1 bottom) circle (0.05cm);

        \filldraw (bottom s3 bottom) circle (0.05cm);

        \node[above] at (left s3 top) {$\hat s_3$} ;
        \node[left] at (left s1 left) {$\hat s_1$} ;

        \node[label={[label distance=0mm]above: {$\hat s_1$}}] 
            at (middle s1 top)  {} ;
        \node[below left] at (middle s3 left) {$\hat s_3$} ;
        \node[above right] at (middle s3 right) {$\hat s_3$} ;
        \node[label={[label distance=0mm]below: {$\hat s_1$}}] 
            at (middle s1 bottom)  {} ;          

        \node[below] at (bottom s3 bottom) {$\hat s_3$} ;
        
    \end{tikzpicture} \\
    };
    \end{tikzpicture}
    
    \caption{Quadrilateral galleries of length three}
    \label{fig:possible galleries of quadrilaterals}
    \end{figure}
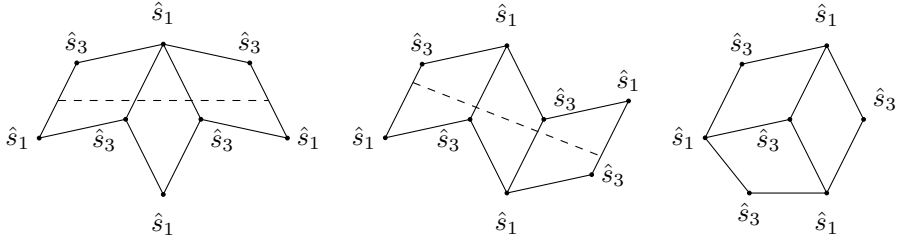

\begin{proof}
    If $\ell(\gamma) < \pi$, then there are no more than 3 (not necessarily distinct) vertices $v_1,v_2,v_3$ of type $\hat s_2$ such that $\gamma \cap st(v_i) \not=\varnothing$. 
    (Otherwise if $\gamma$ intersects 4 or more stars of distinct $\hat s_2$-type vertices, then by developing, we see that $\ell(\gamma) > \pi$.) 
    We know $|\{v_1,v_2,v_3\}| \not= 1$, since this would contradict the fact that $X$ is simplicial, and we know $|\{v_1,v_2,v_3\}| \not= 2$ since this would contradict Lemma \ref{lem:intersectionofstars}. 
    Thus each vertex $v_1$, $v_2$, and $v_3$ are distinct. By developing, this gives one of the three galleries of quadrilaterals in Figure \ref{fig:possible galleries of quadrilaterals}. In this figure, the dashed lines are examples of geodesics, and one should think of these quadrilaterals as ``filled'' as in 
    Figure \ref{fig:checking edge loops}(a), where the central vertices are the images of $v_1$, $v_2$, and $v_3$. 

    In $C(B_3)$, the development $\overline \gamma$ is a local (hence global) geodesic of length $< \pi$, so cannot be closed since $C(B_3)$ is $\cat(1)$. Thus $\overline \gamma$ is a geodesic segment in one of these galleries with endpoints on the boundary, and the edges/vertices containing these endpoints are identified in $X$.
    If any edges or vertices in the left-most or right-most diagrams are identified in $X$, this would contradict Lemma \ref{lem:intersectionofstars}, so $\gamma$ cannot develop onto galleries of these forms. 
    
    Consider the middle gallery. None of the vertices of the middle quadrilateral of this gallery can be identified with any other vertices in the gallery, so the only possible identifications are left-most vertices (or edge) with right-most vertices (resp., edge) of the diagram. But in each case, this induces a loop of length 4 of the type in Figure \ref{fig:checking edge loops}(a) which can be filled by a vertex $v_4$ of type $\hat s_2$, by condition (5). If $v_4 = v_i$ for some $i = 1,2,3$, then this contradicts Lemma \ref{lem:intersectionofstars}, since some pair $v_i$ and $v_j$ ($i,j = 1,2,3$ distinct) would have $St(v_i) \cap St(v_j)$ either the union of an edge and a (disjoint) vertex, or the union of two (distinct) edges. If $v_4 \not= v_i$ for each $i = 1,2,3$, then this again contradicts Lemma \ref{lem:intersectionofstars}: $St(v_4)$ will intersect one of the $St(v_i)$ ($i = 1,2,3$) in two distinct edges, depending on the labeling of the quadrilaterals. Regardless, we know $\gamma$ cannot develop onto a gallery of this shape, and thus $\ell(\gamma) \geq \pi$.
\end{proof}

\begin{lem} \label{lem:novertextovertex}
    Suppose $\gamma$ is a short closed local geodesic. Then there is a short closed loop $\gamma' \ssim \gamma$ such that $\gamma'$ intersects a singular vertex of $X$.
\end{lem}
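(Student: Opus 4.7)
The plan is to push $\gamma$ perpendicular to itself through a continuous family of short loops, arguing that this push must either terminate by colliding with a singular vertex (yielding $\gamma'$ directly) or shrink $\gamma$ all the way to a point (in which case $\gamma$ is short-homotopic to any constant loop, including one at a singular vertex).

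If $\gamma$ already intersects a singular vertex, take $\gamma' = \gamma$; otherwise Lemma \ref{lem:noverteximplieslength} gives $\ell(\gamma) \geq \pi$. Since $\gamma$ avoids singular vertices, each of its points has a neighborhood in $X$ that is locally isometric to a piece of $C(B_3) = S^2$ (at a nonsingular $\hat s_2$-vertex, this piece is the four-simplex quadrilateral of Figure \ref{fig:quadofB}). I would develop $\gamma$ to an arc $\overline\gamma$ of a great circle in $C(B_3)$ as in the discussion preceding Lemma \ref{lem:noverteximplieslength}, choose a unit normal field $N$ along $\overline\gamma$, and lift it back to a normal field along $\gamma$. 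For each $t \in [0, \pi/2)$, define
\[
\gamma_t(s) = \exp_{\gamma(s)}\bigl(t\, N(s)\bigr).
\]
The standard spherical computation for equidistant curves gives $\ell(\gamma_t) = \cos(t) \cdot \ell(\gamma) < 2\pi$, so each $\gamma_t$ is short and $\{\gamma_t\}$ is a short homotopy.

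Let $T$ be the supremum of $t \in [0, \pi/2]$ for which $\gamma_s$ is defined and avoids every singular vertex for all $s < t$. If $T = \pi/2$, the curves $\gamma_t$ degenerate to a single point as $t \to \pi/2$, so $\gamma$ is shrinkable; since the component of $X$ containing $\gamma$ must contain singular vertices (every 2-simplex does), we may take $\gamma'$ to be a constant loop at such a vertex. If instead $T < \pi/2$, then continuity of the exponential map forces the limit $\gamma_T$ to pass through a singular vertex, and we take $\gamma' = \gamma_T$, with the short homotopy provided by $\{\gamma_t\}_{t \in [0, T]}$.

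The main obstacle is justifying that the perpendicular push produces a genuinely \emph{closed} loop in $X$, rather than merely a path. Two subtleties arise: (a) at a nonsingular $\hat s_2$-vertex on $\gamma$, the space $X$ may fail to be a local manifold (the link can have many pairs of antipodal vertices), so the push across such a vertex must be defined via the developed quadrilateral of Figure \ref{fig:quadofB}; and (b) the monodromy identifying the two endpoints of the developed arc $\overline\gamma$ in $C(B_3)$ must be compatible with the chosen normal direction $N$, so that $\gamma_t(0) = \gamma_t(\ell(\gamma))$ in $X$ for every $t$. This second point is where the fine structure of the spherical $B_3$ tessellation, together with the fact that the monodromy preserves the tangent direction of $\gamma$ at its endpoints, enters the argument.
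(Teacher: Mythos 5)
Your strategy of pushing the whole loop along a normal field is genuinely different from the paper's argument, and it has a gap that you flag yourself but do not close, namely point (b): closure of the pushed curves. The development of $\gamma$ identifies a neighborhood of $\overline\gamma(\ell(\gamma))$ with a neighborhood of $\overline\gamma(0)$ via a label-preserving isometry $g$ of $C(B_3)$, i.e.\ an element of $W(B_3)\subset O(3)$. Preserving the basepoint and the forward tangent of $\overline\gamma$ only determines $g$ up to the reflection fixing the great circle containing $\overline\gamma$; if $g$ contains that reflection factor (and $W(B_3)$ does contain such elements --- e.g.\ the longest element acts as $-I$), then $g$ sends $N(\ell(\gamma))$ to $-N(0)$, the gallery is a ``M\"obius-like'' neighborhood of $\gamma$, and $\gamma_t(0)\neq\gamma_t(\ell(\gamma))$ in $X$ for every $t>0$. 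Your construction then produces paths, not loops, and the whole homotopy collapses. Ruling this out is not a routine compatibility check; it would require an argument about which holonomies can arise for closed local geodesics in a $B_3$ complex, and nothing in the hypotheses hands you orientability of the gallery. There are secondary issues as well: $\exp_{\gamma(s)}(tN(s))$ is not well defined in a non-manifold complex without first developing an entire annular (and eventually disk-like) neighborhood of $\gamma$, and the $T=\pi/2$ endgame needs the swept hemisphere to develop without obstruction.

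The paper avoids all of this by moving only a subarc rather than the whole loop. Using Lemma \ref{lem:noverteximplieslength} it picks parameters $a,b$ with $d(\gamma(a),\gamma(b))=\pi$, so that the developed points are antipodal in $C(B_3)$, and then rotates the developed segment $\overline\gamma|_{[a,b]}$ about the axis through these two antipodes until it first meets a singular vertex. Because the two endpoints are honest fixed points of $X$ and the rest of $\gamma$ is left untouched, closure is automatic and no holonomy enters; and because every intermediate arc is a great semicircle between the same antipodal pair, every loop in the homotopy has exactly the same length as $\gamma$, hence is short. If you want to salvage your normal-flow idea, the cleanest fix is essentially to adopt this: anchor the homotopy at two points of $\gamma$ at distance $\pi$ and sweep the lune between the two semicircles, rather than flowing the free loop.
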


\begin{proof}
    Of course, if $\gamma$ already intersects a singular vertex, there is nothing to show. So, assume $\gamma$ intersects no singular vertices of $X$.
    By Lemma \ref{lem:noverteximplieslength}, we know $\ell(\gamma) \geq \pi$, so let $a$ and $b$ be two values such that $d(\gamma(a), \gamma(b)) = \pi$. 
    Develop $\gamma$ to a local geodesic $\overline \gamma$ in $C(B_3)$.
    Then $\overline\gamma(a)$ and $\overline\gamma(b)$ are antipodal points of the unit sphere $C(B_3)$.
    There is a natural homotopy of the segment $\overline\gamma|_{[a,b]}$ in $C(B_3)$ obtained by rotating relative to its endpoints until the segment intersects a singular vertex of $C(B_3)$. Each curve in the homotopy (until the final curve) contains no singular vertices in its interior, and thus can be lifted back to $X$ to obtain a homotopy of $\gamma|_{[a,b]}$ relative to its endpoints through segments of length $\pi$ to a segment which contains a vertex. (The final curve can be added by taking a pointwise limit of this homotopy.) By concatenating this homotopy with the rest of $\gamma$, we obtain a short homotopy from $\gamma$ to a loop containing a vertex.
\end{proof}

\begin{figure}[!ht]
    \centering
    
        \begin{tikzpicture}[scale=0.9,every node/.style={scale=0.9}]
            \matrix[matrix of nodes,column sep=-2pt,nodes={anchor=center, minimum height=0.1cm, align=flush center}]{
    \begin{tikzpicture}[x=1cm,y=1cm,rotate=90]

        \begin{scope}
          \draw[even odd rule,inner color=white,outer color=gray!45]
          (0,0) 
          arc [start angle=45, end angle=135, radius=4] 
          arc [start angle=-135, end angle=-45, radius=4] 
          ;
    
        \path
        (0,0)
        arc [start angle=45, end angle=135, radius=4]
        node[pos=0, circle, fill=black, inner sep=1pt] (right s1) {}
        node[pos=0.5, circle, fill=black, inner sep=1pt] (top s2) {}
        node[pos=0.25, circle, fill=black, inner sep=1pt] (top right s3) {}
        node[pos=0.75, circle, fill=black, inner sep=1pt] (top left s3) {}
        node[pos=1, circle, fill=black, inner sep=1pt] (left s1) {}
        ;
    
        \path
        (left s1)
        arc [start angle=-135, end angle=-45, radius=4]
        node[pos=0.5, circle, fill=black, inner sep=1pt] (bottom s1) {}
        node[pos=0.2, circle, fill=black, inner sep=1pt] (bottom left s2) {}
        node[pos=0.8, circle, fill=black, inner sep=1pt] (bottom right s2) {}
        ;
       
        \end{scope}
    
        \node[below] at (left s1) {$\hat s_1$};
        \node[above] at (right s1) {$\hat s_1$};
        \node[left] at (top s2) {$\hat s_2$};
        \node[left] at (top right s3) {$\hat s_3$};
        \node[left] at (top left s3) {$\hat s_3$};
        \node[right] at (bottom s1) {$\hat s_1$};
        \node[right] at (bottom left s2) {$\hat s_2$};
        \node[right] at (bottom right s2) {$\hat s_2$};
    
        \draw (top s2) -- (bottom s1);
        \draw (bottom s1) to[out=135,in=-60] (top left s3);
        \draw (bottom s1) to[out=45,in=240] (top right s3);
        \draw (top left s3) to[out=240,in=100] (bottom left s2);
        \draw (top right s3) to[out=-60,in=80] (bottom right s2);
    
        \draw[line width=0.05cm, color=red] (right s1) to[out=160,in=20] 
            node[pos=0.4, right] (path){$\gamma_0$} (left s1);
    
    \end{tikzpicture}
    
        &
    \begin{tikzpicture}[x=1cm,y=1cm,rotate=90]

        \begin{scope}
    
          \draw[even odd rule,inner color=white,outer color=gray!45] 
          (0,0) 
          arc [start angle=30, end angle=150, radius=3.3] 
          arc [start angle=-150, end angle=-30, radius=3.3] 
          ;
    
        \path
        (0,0)
        arc [start angle=30, end angle=150, radius=3.3]
        node[pos=0, circle, fill=black, inner sep=1pt] (right s2) {}
        node[pos=0.5, circle, fill=black, inner sep=1pt] (top s1) {}
        node[pos=0.18, circle, fill=black, inner sep=1pt] (top right s3) {}
        node[pos=0.82, circle, fill=black, inner sep=1pt] (top left s3) {}
        node[pos=1, circle, fill=black, inner sep=1pt] (left s2) {}
        ;
    
        \path
        (left s2)
        arc [start angle=-150, end angle=-30, radius=3.3]
        node[pos=0.5, circle, fill=black, inner sep=1pt] (bottom s2) {}
        node[pos=0.225, circle, fill=black, inner sep=1pt] (bottom left s1) {}
        node[pos=0.775, circle, fill=black, inner sep=1pt] (bottom right s1) {}
        ;
       
        \end{scope}

        \node[below] at (left s2) {$\hat s_2$};
        \node[above] at (right s2) {$\hat s_2$};
        \node[left] at (top s1) {$\hat s_1$};
        \node[left] at (top right s3) {$\hat s_3$};
        \node[left] at (top left s3) {$\hat s_3$};
        \node[right] at (bottom s2) {$\hat s_2$};
        \node[right] at (bottom left s1) {$\hat s_1$};
        \node[right] at (bottom right s1) {$\hat s_1$};
    
        \draw (top s1) -- (bottom s2);
        \draw (bottom left s1) to[out=125,in=-110] (top left s3);
        \draw (bottom right s1) to[out=55,in=290] (top right s3);
        \draw [name path=curve 1] (bottom left s1) to[out=80,in=210] (top s1);
        \draw [name path=curve 4] (bottom right s1) to[out=100,in=-30] (top s1);
        \draw [name path=curve 3] (bottom left s1) to[out=15,in=220] (top right s3);
        \draw [name path=curve 2] (bottom right s1) to[out=165,in=-40] (top left s3);
    
        \fill[name intersections={of=curve 1 and curve 2, by={middle left s2}}]
            (middle left s2) circle (1.5pt); 
        \node[anchor=-170,outer sep=3pt] at (middle left s2) {$\hat s_2$};
    
        \fill[name intersections={of=curve 3 and curve 2, by={middle s2}}]
            (middle s2) circle (1.5pt); 
        \node[anchor=-150,outer sep=3pt] at (middle s2) {$\hat s_3$};
        
        \fill[name intersections={of=curve 4 and curve 3, by={middle right s2}}]
            (middle right s2) circle (1.5pt); 
        \node[anchor=165,outer sep=3pt] at (middle right s2) {$\hat s_2$};

        \draw[line width=0.05cm, color=red] (right s2) to[out=140,in=40]    
            node[pos=0.4, right] (path){$\gamma_0$} (left s2);
    
    \end{tikzpicture}
    &
    
    \begin{tikzpicture}[x=1cm,y=1cm,rotate=90]

        \begin{scope}
    
          \draw[even odd rule,inner color=white,outer color=gray!45] 
          (0,0) 
          arc [start angle=38, end angle=142, radius=3.6] 
          arc [start angle=-142, end angle=-38, radius=3.6] 
          ;
    
        \path
        (0,0)
        arc [start angle=38, end angle=142, radius=3.6]
        node[pos=0, circle, fill=black, inner sep=1pt] (right s3) {}
        node[pos=0.4, circle, fill=black, inner sep=1pt] (top s3) {}
        node[pos=0.16, circle, fill=black, inner sep=1pt] (top s2) {}
        node[pos=0.8, circle, fill=black, inner sep=1pt] (top s1) {}
        node[pos=1, circle, fill=black, inner sep=1pt] (left s3) {}
        ;
    
        \path
        (left s1)
        arc [start angle=-142, end angle=-38, radius=3.6]
        node[pos=0.4, circle, fill=black, inner sep=1pt] (bottom s3) {}
        node[pos=0.16, circle, fill=black, inner sep=1pt] (bottom s2) {}
        node[pos=0.8, circle, fill=black, inner sep=1pt] (bottom s1) {}
        ;
       
        \end{scope}
    
        \node[below] at (left s3) {$\hat s_3$};
        \node[above] at (right s3) {$\hat s_3$};
        \node[left] at (top s3) {$\hat s_3$};
        \node[left] at (top s2) {$\hat s_2$};
        \node[left] at (top s1) {$\hat s_1$};
        \node[right] at (bottom s3) {$\hat s_3$};
        \node[right] at (bottom s2) {$\hat s_2$};
        \node[right] at (bottom s1) {$\hat s_1$};
    
        \draw (bottom s2) to[out=110,in=235] (top s1);    
        \draw (bottom s1) to[out=55,in=-70] (top s2);    
        \draw (bottom s3) to[out=140,in=-80] (top s1);
        \draw (bottom s1) to[out=100,in=-40] (top s3);
        \draw [name path=curve 1] (bottom s3) to[out=45,in=260] (top s3);
        \draw [name path=curve 2] (bottom s1) to[out=135,in=-10] (top s1);
    
        \fill[name intersections={of=curve 1 and curve 2, by={middle s2}}]
            (middle s2) circle (1.5pt); 
        \node[anchor=-160,outer sep=3pt] at (middle s2) {$\hat s_2$};

        \draw[line width=0.05cm, color=red] (right s3) to[out=150,in=30] 
            node[pos=0.7, right] (path){$\gamma_0$} (left s3);

    \end{tikzpicture}
    
    \\
    (1) & (2) & (3) \\
    };
    \end{tikzpicture}
    \caption{The lunes with terminal vertices of type $\hat s_1$, $\hat s_2$, and $\hat s_3$, resp.}
    \label{fig:lunes}
\end{figure}
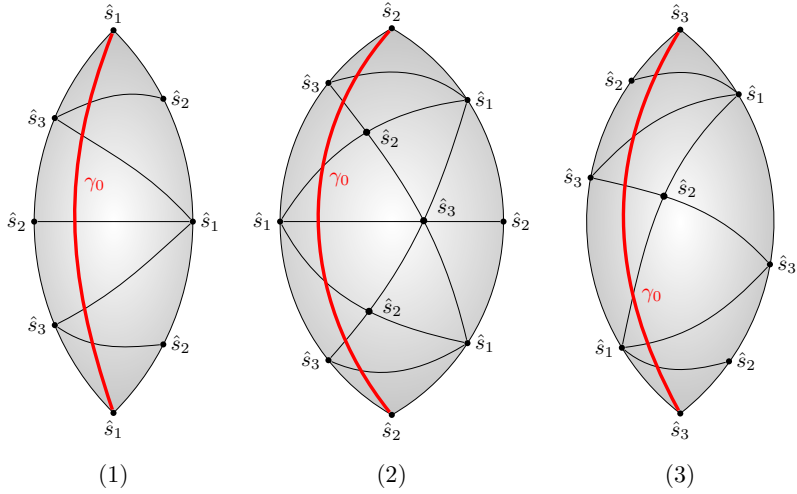

\begin{lem} \label{lem:vertextoedgepath}
    Suppose $\gamma$ is a short closed local geodesic which intersects a singular vertex. Then there is a short closed edge path $\gamma'$ with $\gamma' \ssim \gamma$.
\end{lem}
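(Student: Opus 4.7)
Decompose $\gamma$ at the singular vertices it meets. Let $v_1, \ldots, v_n$ (with $n \geq 1$) be those vertices in cyclic order; they split $\gamma$ into sub-arcs $\gamma_1, \ldots, \gamma_n$ with each $\gamma_i$ running from $v_i$ to $v_{i+1}$ (indices mod $n$) and meeting only nonsingular vertices in its interior. The goal is to short homotope each $\gamma_i$ rel endpoints to an edge path $\gamma'_i$ in $X$ and concatenate, yielding $\gamma \ssim \gamma'_1 \cdot \gamma'_2 \cdots \gamma'_n$, which is the required short closed edge path.

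For each $i$, develop $\gamma_i$ onto $C(B_3)$ to obtain a locally geodesic great-circle arc $\overline{\gamma_i}$ from a singular vertex $\overline{v_i}$ to a singular vertex $\overline{v_{i+1}}$. Since singular vertices of $C(B_3)$ cannot lie in the interior of a local geodesic and the antipode of any singular vertex of $C(B_3)$ is a singular vertex of the same type, $\ell(\gamma_i) = \ell(\overline{\gamma_i}) \leq \pi$, with equality exactly when $\overline{v_{i+1}} = -\overline{v_i}$. When $\ell(\gamma_i) = \pi$, the arc $\overline{\gamma_i}$ is a diameter of a closed lune $L_i$ of the form in Figure \ref{fig:lunes}(1) or (3), chosen by the type of $v_i$; when $\ell(\gamma_i) < \pi$, the arc lies in a sub-gallery of one of these lunes.

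Next, the girth hypotheses (2) and (4) (at least $8$ at a type $\hat s_1$ vertex, at least $6$ at a type $\hat s_3$ vertex) should be applied to show that the $4$ (resp.\ $3$) consecutive 2-simplices of $L_i$ at each singular endpoint lift to $4$ (resp.\ $3$) \emph{distinct} 2-simplices at $v_i$ and $v_{i+1}$ in $X$; these glue along shared edges to extend the developing map to a simplicial immersion $L_i \to X$. Within the lifted lune, one then rotates $\overline{\gamma_i}$ rel endpoints through a continuous family of arcs ending at an edge path in the 1-skeleton of $L_i$: when $\ell(\gamma_i) = \pi$, the family consists of length-$\pi$ great semicircles, so the total loop length is preserved; when $\ell(\gamma_i) < \pi$, a straight-line-style deformation across the relevant sub-gallery suffices, and deforming a single sub-arc at a time (using $\ell(\gamma) < 2\pi$ and individual sub-arc lengths at most $\pi$) keeps the intermediate total loop length below $2\pi$ throughout. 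The main obstacle is the lune-lifting step: the girth conditions must be used carefully to ensure the fans of 2-simplices at $v_i$ and $v_{i+1}$ assemble in $X$ as they do in $C(B_3)$, with no accidental self-identifications that would obstruct the rotation homotopy; bookkeeping the intermediate loop length in the sub-case $\ell(\gamma_i) < \pi$ is a secondary but strictly less delicate point.
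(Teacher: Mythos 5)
Your proposal follows the paper's proof almost exactly: decompose $\gamma$ at its singular vertices, develop each non-edge sub-arc onto a lune of $C(B_3)$ as in Figure \ref{fig:lunes}, rotate the arc rel endpoints to the boundary edge path of the lune, and lift the rotation back to $X$. Two remarks on where you deviate. First, the sub-case $\ell(\gamma_i) < \pi$, which is the one place your argument is genuinely vague (a ``straight-line-style deformation across the relevant sub-gallery'' is not a length-controlled homotopy to the 1-skeleton), does not actually occur: any two non-antipodal singular vertices of $C(B_3)$ lie on a common wall of the reflection arrangement (one checks this directly for the vertices $\pm e_i$ and $(\pm 1,\pm 1,\pm 1)/\sqrt 3$), so the unique geodesic between them runs along edges, contradicting the assumption that $\gamma_i$ contains no edges in its interior. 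This is exactly why the paper can assert that the endpoints of each development are at distance $\pi$ and treat only the rotation case. Second, the girth hypotheses are not needed for the lune-lifting step: the development already identifies the union of closed 2-simplices meeting $\gamma_i$ with the full lune via a locally injective simplicial map, and the rotation homotopy can be pushed back through this identification simplex by simplex regardless of whether the lune embeds in $X$; no ``no accidental self-identification'' condition is required, since the resulting edge path need not be embedded.
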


\begin{proof}
    Decompose $\gamma$ into a concatenation $\gamma_0\gamma_1\dots\gamma_n$ of geodesic segments which contain no singular vertices in their interior. Notice that for each $i$, this means either $\gamma_i$ is an edge path or contains no edges in its interior. Suppose $\gamma_i$ contains no edges. We can develop $\gamma_i$ onto $C(B_3)$, resulting in one of the diagrams (1) or (3) of Figure \ref{fig:lunes}. (The development can pass through the vertex of type $\hat s_2$ with no issues.) The endpoints of this development are at distance $\pi$ from each other, so we may rotate the development to lay on the boundary of its respective lune, and lift this rotation to get a length-preserving homotopy of $\gamma_i$ to an edge path. 
\end{proof}

The following lemma holds in much greater generality than our current setting, so we provide this more general proof here for reference.
 
\begin{prop} \label{prop:maintool}
    Suppose $Y$ is a locally $\cat(1)$ piecewise spherical simplicial complex with $\mathrm{Shapes}(Y)$ finite. Let $\gamma$ be a short loop based at $x_0 \in Y$. Then either $\gamma \ssim 0$, or there exists a closed local geodesic $\gamma'$ based at $x_0$ such that $\gamma \ssim \gamma'$.
\end{prop}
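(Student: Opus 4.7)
The plan is a standard minimize-then-straighten argument carried out within the short-homotopy class of $\gamma$. Set
\[
L := \inf\{\ell(\alpha) : \alpha \ssim \gamma \text{ and } \alpha \text{ is based at } x_0\}.
\]
I would first dispose of the degenerate case $L = 0$: pick a representative $\alpha \ssim \gamma$ based at $x_0$ with length less than the local $\cat(1)$ injectivity radius at $x_0$. Since every point on such an $\alpha$ lies within $\ell(\alpha)/2$ of $x_0$, the loop $\alpha$ is contained in a geodesically convex ball about $x_0$, where a radial contraction through loops of length $\le \ell(\alpha) < 2\pi$ yields $\alpha \ssim 0$, and hence $\gamma \ssim 0$.

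Now assume $L > 0$. Choose a sequence $\alpha_n \ssim \gamma$ based at $x_0$ with $\ell(\alpha_n) \to L$, reparameterized with constant speed, so the $\alpha_n$ are uniformly Lipschitz maps $S^1 \to Y$. The hypothesis that $\mathrm{Shapes}(Y)$ is finite gives uniform lower bounds on simplex diameters and on the injectivity radius of each simplex, so each short loop can be covered by a uniformly bounded number of small balls of definite radius. This forces all the $\alpha_n$ into a common compact subcomplex $Y_0 \subseteq Y$, at which point Arzel\`a--Ascoli produces a uniformly convergent subsequence with limit $\gamma' : S^1 \to Y_0$ based at $x_0$, with $\ell(\gamma') \le L$ by lower semicontinuity of length. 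For large $n$, local $\cat(1)$ supplies a canonical short homotopy from $\alpha_n$ to $\gamma'$ by pointwise geodesic interpolation: all intermediate loops have length arbitrarily close to $L < 2\pi$ and are thus short. Hence $\gamma' \ssim \gamma$ and $\ell(\gamma') = L$.

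It remains to show $\gamma'$ is a local geodesic. If it failed at some $t \in S^1$, then in a small locally $\cat(1)$ neighborhood $U$ of $\gamma'(t)$ I would replace a short arc of $\gamma'$ through $t$ by the unique $\cat(1)$ geodesic between its endpoints, producing a strictly shorter loop $\alpha'$ with $\alpha' \ssim \gamma'$ via a short homotopy supported in $U$. When $\gamma'(t) \ne x_0$ this immediately contradicts the definition of $L$. The case $\gamma'(t) = x_0$ is the main delicate point: the shortcut there could push the loop off $x_0$. I would handle it by a quantitative estimate — the length saved by shortening a corner of angle strictly less than $\pi$ is linear in the arc length cut out, and strictly exceeds the length of any small ``detour'' to $x_0$ that one tacks on to keep the loop based at $x_0$. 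This again contradicts minimality and forces $\gamma'$ to be a local geodesic.

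The main obstacle I anticipate is the compactness extraction in the second paragraph — converting finiteness of shapes into a usable compact subcomplex containing the whole minimizing sequence — together with the basepoint issue in the final step. The remainder of the argument is a standard length-minimization in a locally $\cat(1)$ space.
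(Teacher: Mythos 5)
Your overall strategy (minimize length within the short-homotopy class of $\gamma$, extract a limiting loop, argue it is locally geodesic) is the same as the paper's, but two steps do not go through as written. The first is the compactness extraction: from a uniformly bounded number of covering balls you conclude that all the $\alpha_n$ lie in a common compact subcomplex $Y_0$. This is false in general. The complexes to which the proposition is applied (spherical Deligne complexes) are locally infinite, so even the ball of radius $\pi$ about $x_0$ is not contained in any compact subcomplex, and a minimizing sequence of based loops can run through infinitely many distinct simplices adjacent to $x_0$; Arzel\`a--Ascoli therefore cannot be applied directly. The paper's resolution is exactly the idea your sketch is missing: it first replaces each $\alpha_n$ by a piecewise geodesic with a uniformly bounded number of break points (using uniform local $\cat(1)$-ness, which does follow from finiteness of $\mathrm{Shapes}(Y)$), so that each $\alpha_n$ lies in a finite subcomplex $K_n$ with a uniformly bounded number of cells; finiteness of $\mathrm{Shapes}(Y)$ then yields a subsequence along which the $K_n$ are pairwise isometrically isomorphic, and the break points are transported by these isomorphisms into a single compact model, where the convergence and lower-semicontinuity argument is carried out.

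The second gap is the basepoint corner, and here your quantitative claim is backwards. If $\gamma'$ has a corner of angle $\theta < \pi$ at $x_0$ and you cut it over arcs of length $r$ on each side, the comparison estimates give a saving of roughly $2r\bigl(1-\sin(\theta/2)\bigr)$, while the cheapest detour reattaching the shortcut to $x_0$ costs roughly $2r\cos(\theta/2)$; since $\cos(\theta/2) > 1 - \sin(\theta/2)$ for every $\theta \in (0,\pi)$, the re-based loop is \emph{longer}, not shorter, and minimality of $L$ is not contradicted. This is precisely why a length-minimizer among based loops is in general only a geodesic loop, with a possible corner at the basepoint. (The paper's own treatment of this point is also terse --- it shortcuts ``segments'' without discussing whether the segment contains $x_0$ --- and in the downstream application what is actually used is local geodesy away from the singular vertices at which the loop is subsequently decomposed, the basepoint being one of them.) As written, however, your argument for local geodesy at $x_0$ does not work.
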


\begin{proof}
    Let $\Omega = \Omega(Y, x_0, 2\pi)$ denote the space of loops in $Y$ based at $x_0$ with length $< 2\pi$, equipped with the supremum metric $d_{\Omega}(\alpha,\beta) = \sup d_X(\alpha(t), \beta(t))$. Note that the length function $\ell : \Omega \to [0,\infty)$ is lower semi-continuous with respect to this metric. 
    Let $\Omega'$ denote the connected component of $\Omega$ containing $\gamma$, 
    and let $I = \inf \ell(\Omega')$. 
    Choose a sequence $\{\gamma_n\}$ in $\Omega'$ such that $\lim \ell(\gamma_n) \to I$.

    Since $Y$ is locally $\cat(1)$ and $\mathrm{Shapes}(Y)$ is finite, $Y$ is actually ``uniformly locally $\cat(1)$'', meaning that for sufficiently small $\varepsilon$, given any point $y \in Y$, the $\varepsilon$-neighborhood about $x$ is $\cat(1)$. (This follows quickly from Theorem I.7.39 and Lemma I.7.56 from \cite{bridson2013metric}.) This implies we can shrink the elements of the sequence $\{\gamma_n\}$ to piecewise geodesic with a uniformly bounded number of geodesic subsegments. In more detail:
    fix $n$, and let $U_{n,0}$ denote the $\varepsilon$ neighborhood about $x_{n,0} \coloneqq x_0$. Choose a point $x_{n,1}$ which is distance $> \varepsilon/2$ from $x_0$ and in the same connected component of $U_{n,0} \cap \gamma_n$ as $x_0$. Let $U_{n,1}$ denote the $\varepsilon$ ball around $x_{n,1}$. Continuing in this fashion, we have a finite set of points $\{x_{n,0}, \dots, x_{n,k_n}\}$ on $\gamma_n$ and $\varepsilon$-balls $\{U_{n,0},\dots,U_{n,k_n}\}$ covering $\gamma_n$, with $U_{n,i}$ centered at $x_{n,i}$. But since the lengths of the $\gamma_n$ are uniformly bounded, the sequence $\{k_n\}$ must be bounded as well, say $k_n \leq M$. For any $n$, if $i > k_n$, define $x_{n,i} = x_0$ and $U_{n,i} = U_{n,0}$.
    Notice that by our definition, the segment of $\gamma_n$ between $x_{n,i}$ and $x_{n,i+1}$ is contained in $U_{n,i}$, which is $\cat(1)$. Hence this segment can be shrunken to the (unique) geodesic segment between $x_{n,i}$ and $x_{n,i+1}$. Doing this for each $i$ yields a piecewise geodesic loop $\overline \gamma_n$ which is short homotopic to $\gamma_n$. Note also that the number of points at which the $\gamma_n$ are not geodesic (which we will call the \emph{break points}) is uniformly bounded. By abuse of notation, we replace $\gamma_n$ with this piecewise geodesic $\overline\gamma_n$.

    Now since $\gamma_n$ is piecewise geodesic, it is contained in a finite subcomplex $K_n \subseteq Y$.
    Because the number of break points of the $\gamma_n$ are uniformly bounded and $\mathrm{Shapes}(Y)$ is finite, there is a uniform bound on the number of cells in the $K_n$.  
    Then since $\mathrm{Shapes}(Y)$ is finite, 
    this implies there is a subsequence $\{\gamma_{n_i}\}$ such that $K_{n_i}$ is isometrically isomorphic to $K_{n_j}$ for all $i$ and $j$, via some map $\varphi_{ij} : K_{n_i} \to K_{n_j}$.
    For each $i$ and $j$, let $\gamma_{ij} = \varphi_{ij}(\gamma_{n_i})$, and for $k = 0,\dots,M$, let $x_{ij}^{(k)} = \varphi_{ij}(x_{i,k})$, so that $\{x_{ij}^{(0)},\dots, x_{ij}^{(M)}\}$ forms the set of break points of $\gamma_{ij}$. Then for $n \geq 0$ and $k = 0, \dots, M$, the sequence $\{x_{in}^{(k)}\}_{i=1}^\infty$ has a convergent subsequence (it is contained in the compact space $K_j$). In particular, we can find a common subsequence $\{x_{i_jn}^{(k)}\}_{j=1}^\infty$ which converges for each $k = 0,\dots,M$. Let $x_n^{(k)} = \lim_{j\to\infty} x_{i_jn}^{(k)}$ for each $k = 0,\dots,M$, and let $\hat \gamma_n$ be the concatenation of the (unique) geodesic segments from $x_n^{(k)}$ to $x_n^{(k+1)}$.

    Choose $N$ large enough so that $d_X( x_{i_jN}^{(k)}, x_N^{(k)} ) < \varepsilon/2$ for all $j \geq N$ and all $k = 0,\dots,M$.
    Let $K = K_N$ and consider the sequence $\{\gamma_{i_jN}\}_{j=N}^\infty$.
    Then, for all $j \geq N$ and each $k = 0,\dots,M$, since $d_X( x_{i_{j}N}^{(k)}, x_{i_{j}N}^{(k+1)} ) < \varepsilon/2$ (by our original definition of the piecewise geodesics $\gamma_n$), and $d_X( x_{i_jN}^{(k)}, x_N^{(k)} ) < \varepsilon/2$, it follows that $d_X(x_{i_{j}N}^{(k+1)}, x_N^{(k)}) < \varepsilon$. 
    Note that we also have $d_X(x_N^{(k)}, x_N^{(k+1)}) < \varepsilon/2$. In summary, the points $x_{i_{j}n}^{(k)}, x_{i_{j}N}^{(k+1)}$, and $x_N^{(k+1)}$ are contained in the $\varepsilon$-ball centered at $x_N^{(k)}$. 
    In particular, this ball also contains the geodesics between respective pairs of these points. Since each such ball is $\cat(1)$, the convergence of the break points of the $\gamma_{i_jN}$ implies uniform convergence of the sequence $\{\gamma_{i_jN}\}$ to $\hat \gamma_N$ in $X$ (i.e., convergence in the sup metric). Then since the length function is lower semicontinuous, 
    \[
        I = \liminf \ell(\gamma_{i_jN}) \geq \ell(\lim \gamma_{i_jN}) = \ell(\hat\gamma_N) \geq I,
    \]
    so $\ell(\hat\gamma_N) = I = \inf \ell(\Omega_0)$. 
    But in addition, again since this $\varepsilon$-ball is $\cat(1)$, the homotopy from $x_{i_{j}N}^{(k)}$ to $x_N^{(k)}$ along the geodesic segment between them, and the homotopy from $x_{i_{j}N}^{(k+1)}$ to $x_N^{(k+1)}$ along the geodesic segment between them, together induce a short (free) homotopy from the (unique) geodesic segment between  $x_{i_{j}N}^{(k)}$ and $x_{i_{j}N}^{(k+1)}$ to the (unique) geodesic segment between $x_N^{(k)}$ and $x_N^{(k+1)}$. In other words, for $j \geq N$, $\gamma_{i_jN} \ssim \hat \gamma_N$. This means $\hat\gamma_N \ssim \gamma$.

    If $I = 0$, then we must have that $\hat\gamma_N$ is a constant loop, so $\gamma \ssim 0$. Suppose then that $\gamma$ is not shrinkable, i.e., that $I > 0$. Assume also that $\hat\gamma_N$ is not locally geodesic. Then there are points $p,q$ on $\hat\gamma_N$ so that one of the segments of $\hat\gamma_N$ between $p$ and $q$ is not a geodesic. Replacing this segment with the unique geodesic between $p$ and $q$ would give a loop in $\Omega_0$ which has strictly shorter length than $\hat\gamma_N$, contradicting the fact that $\ell(\hat\gamma_N) = \inf \ell(\Omega_0)$. Thus $\hat\gamma_N$ is locally geodesic.
\end{proof}

\begin{thm}\label{prop:edgereduction}
    If $\gamma$ is a short loop, then there is a short closed edge path $\gamma'$ with $\gamma \sim \gamma'$.
\end{thm}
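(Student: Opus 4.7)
The plan is to chain the three preceding reductions. First apply Proposition~\ref{prop:maintool} to $\gamma$: either $\gamma \ssim 0$, in which case the constant loop is a (trivial) short closed edge path, or $\gamma \ssim \gamma_0$ for some short closed local geodesic $\gamma_0$. If $\gamma_0$ meets a singular vertex, Lemma~\ref{lem:vertextoedgepath} immediately produces the desired edge path and we are done. Otherwise, Lemma~\ref{lem:novertextovertex} gives a short loop $\gamma_1 \ssim \gamma_0$ which passes through a singular vertex.

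In general $\gamma_1$ need not be a local geodesic, since the rotation inside a lune used in the proof of Lemma~\ref{lem:novertextovertex} can introduce corners where the rotated segment is spliced back into the rest of $\gamma_0$. I therefore apply Proposition~\ref{prop:maintool} a second time, now to $\gamma_1$: either $\gamma_1 \ssim 0$ (done), or we obtain a closed local geodesic $\gamma_2 \ssim \gamma_1$. Provided $\gamma_2$ still intersects a singular vertex, Lemma~\ref{lem:vertextoedgepath} applied to $\gamma_2$ furnishes the desired short closed edge path $\gamma' \ssim \gamma_2 \ssim \gamma_1 \ssim \gamma_0 \ssim \gamma$.

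The main obstacle is to justify that $\gamma_2$ must pass through a singular vertex, so that Lemma~\ref{lem:vertextoedgepath} really does apply at the end. Were this to fail, Lemma~\ref{lem:noverteximplieslength} would force $\ell(\gamma_2) \geq \pi$, and since $\gamma_2$ realizes the infimum length in the short-homotopy class of $\gamma$ (to which $\gamma_0$ also belongs, with $\ell(\gamma_0) \geq \pi$ by the same lemma), all of $\ell(\gamma_0)$, $\ell(\gamma_1)$, $\ell(\gamma_2)$ would be pinned to $\pi$. One can then rule out this configuration by inspecting the development onto $C(B_3)$ of a length-$\pi$ closed local geodesic avoiding singular vertices---it is a half great circle whose antipodal endpoints are identified in $X$---and exploiting the freedom in the choice of rotation axis (hence of singular vertex encountered) in Lemma~\ref{lem:novertextovertex} to arrange that at least one singular vertex intersection survives the subsequent geodesic straightening. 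Alternatively one iterates the two-step procedure and argues termination from the finiteness of $\mathrm{Shapes}(X)$ and the rigid combinatorial structure forced by length $\pi$.
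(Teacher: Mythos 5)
You have correctly identified the crux --- after straightening $\gamma_1$ (the loop through a singular vertex produced by Lemma~\ref{lem:novertextovertex}) into a closed local geodesic $\gamma_2$, one must know that $\gamma_2$ still meets a singular vertex before Lemma~\ref{lem:vertextoedgepath} can be invoked --- but your proposed justifications for this do not constitute a proof. The length-pinning observation ($\ell(\gamma_0)=\ell(\gamma_1)=\ell(\gamma_2)=\pi$) is fine as far as it goes, but the two escape routes you sketch are only gestures: ``exploiting the freedom in the choice of rotation axis \dots\ to arrange that at least one singular vertex intersection survives the subsequent geodesic straightening'' gives you no actual control over what the second application of Proposition~\ref{prop:maintool} does to the loop, and the alternative of ``iterating and arguing termination'' has no monovariant, since all the lengths are already equal to $\pi$ and nothing strictly decreases. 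As written, the argument has a hole exactly at the step you flagged.

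The fix is already contained in the statement of Proposition~\ref{prop:maintool}: the closed local geodesic it produces is \emph{based at} the chosen basepoint $x_0$, because the entire minimization takes place in the component of the based loop space $\Omega(Y,x_0,2\pi)$. So you should apply the proposition to $\gamma_1$ with basepoint equal to the singular vertex $v$ that $\gamma_1$ passes through; the resulting local geodesic $\gamma_2$ is then based at $v$, hence automatically intersects a singular vertex, and Lemma~\ref{lem:vertextoedgepath} applies with no further argument. (The same device handles the case where the original $\gamma$ already meets a singular vertex: base the first application of Proposition~\ref{prop:maintool} there and skip Lemma~\ref{lem:novertextovertex} entirely.) This is precisely the route the paper takes; with that one-line correction your chain of reductions is the intended proof.
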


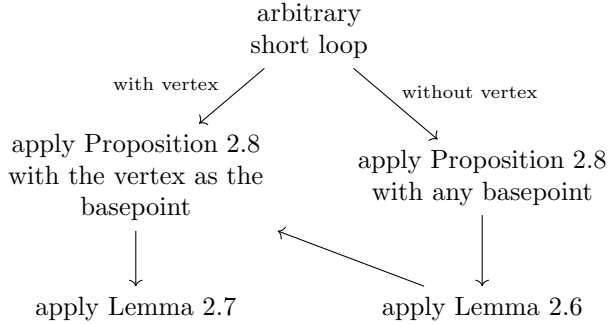
\begin{figure}[!ht]
\centering
\begin{tikzcd}[row sep=0.75cm, column sep=-1cm]
    & \parbox{2cm}{\centering arbitrary short loop}
            \arrow[ld, swap, "\text{with vertex }"]
            \arrow[rd, "\text{without vertex}"] 
            \\
        \parbox{4cm}{\centering apply Proposition \ref{prop:maintool} with the vertex as the basepoint}
            \arrow[d] 
        & & \parbox{4cm}{\centering apply Proposition \ref{prop:maintool} with any basepoint} 
            \arrow[d]
            \\
        \parbox{4cm}{\centering apply Lemma \ref{lem:vertextoedgepath}}
        & & \parbox{4cm}{\centering apply Lemma \ref{lem:novertextovertex}} 
            \arrow[llu]
            \\
\end{tikzcd}
\caption{A flow chart for the proof of Theorem \ref{prop:edgereduction}}
\end{figure}

\begin{proof}
    If $\gamma \ssim 0$, there is nothing to show, so suppose $\gamma$ is not shrinkable.
    Suppose then that $\gamma$ intersects some singular vertex $v$ of $X$. Then we may apply Proposition \ref{prop:maintool} with basepoint $v$ to obtain a short closed local geodesic $\gamma_1$ based at $v$ with $\gamma \ssim \gamma_1$. By Lemma \ref{lem:vertextoedgepath}, there is an edge path $\gamma'$ with $\gamma' \ssim \gamma_1$, and so $\gamma \ssim \gamma'$. 

    Now suppose $\gamma$ does not intersect a singular vertex of $X$. Then apply Proposition \ref{prop:maintool} (with any basepoint) to obtain a short closed local geodesic $\gamma_1$ with $\gamma_1 \ssim \gamma$. Lemma \ref{lem:novertextovertex} gives a short loop $\gamma_2$ with $\gamma_1 \ssim \gamma_2$ such that $\gamma_2$ intersects some singular vertex $v \in D(B_3)$. Applying Proposition \ref{prop:maintool} again, this time based at $v$, gives a short closed geodesic $\gamma_3$ based at $v$ with $\gamma_3 \ssim \gamma_2$. We can then apply Lemma \ref{lem:vertextoedgepath} to get an edge path $\gamma'$ with $\gamma' \ssim \gamma_3$, and in particular, $\gamma' \ssim \gamma$. 
\end{proof}

\subsection{Further edge path reduction}

Next, we work to reduce arbitrary short closed edge paths to the list in Figure \ref{fig:checking edge loops}.

Let $s_i$ denote the edge of $\Delta$ opposite $\hat s_i$ for $i=1,2,3$.  
Then let $\alpha$ be the length of $s_1$, $\beta$ the length of $s_2$, and $\delta$ the length of $s_3$. 
Then 
\begin{align*}
    \alpha &= \arccos\left( \frac{\sqrt{2}}{\sqrt{3}} \right) \approx 0.615 \\
    \beta  &= \arccos\left( \frac{1}{\sqrt{3}} \right) \approx 0.955 \\
    \delta &= \arccos\left( \frac{1}{\sqrt{2}} \right) \approx 0.785.
\end{align*}
The maps $m_\sigma$ give rise an edge labeling of $X$. We will call an edge $e$ \emph{type $s_i$} whenever $m_\sigma(e)$ is the edge $s_i$ in $\Delta$.

We can rule out certain combinations of edges as possible loops immediately.
For example, since $X$ is a flag complex, any loop of edge length 3 must be filled by a simplex and thus is shrinkable.
Similarly, $X$ is simplicial, so it contains no digons and every edge is embedded. Hence we may restrict ourselves to loops of edge length $\geq 4$.
We may also restrict our consideration to embedded edge paths; if a path is not embedded, we may reparameterize it to be a union of embedded closed edge paths which we may shrink separately. Thus in the following, all closed edge paths are assumed to be embedded unless stated otherwise.

For the following further reduction, for $i = 1$ or $3$, say that a segment in the 1-skeleton of $X$ is ``type $2s_i$'' if it is the union of a pair of (distinct) edges of type $s_i$ which meet at a vertex of type $\hat s_2$. 

\begin{lem} \label{lem: first edge restriction}
    Any closed edge path of $X$ can be shrunken to a closed edge path consisting only of (distinct) segments of type $2s_i$ for $i = 1,3$, and an even number of edges of type $s_2$. 
\end{lem}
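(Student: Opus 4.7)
The plan is to iteratively \emph{cut corners} at $\hat s_2$-vertices where the path makes an $s_1$-to-$s_3$ turn, replacing the offending pair of edges with the unique $s_2$-edge that completes the spanned 2-simplex. After finitely many such moves, every visit to an $\hat s_2$-vertex will use two edges of the same type, the remaining edges will all have type $s_2$, and a parity count will force their number to be even.

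For the corner-cutting move itself, suppose $\gamma$ traverses consecutive vertices $u, v, w$ where $v$ has type $\hat s_2$ and $u, w$ have types $\hat s_3, \hat s_1$ respectively (the case with $u, w$ swapped is symmetric). In $lk(v, X)$ the points representing the incident edges $uv$ and $vw$ lie on opposite sides of the bipartition from condition (3) of Theorem \ref{thm:B3 CAT1 Criteria}, so they span an edge in the link; since $X$ is a flag complex, this edge corresponds to a 2-simplex $\sigma \subseteq X$ containing $u, v, w$. The third edge of $\sigma$ joins the $\hat s_1$-vertex $w$ to the $\hat s_3$-vertex $u$ and hence has type $s_2$. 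Because $\beta < \alpha + \delta$, replacing the subsegment $uv \cup vw$ with this $s_2$-edge across $\sigma$ produces a strictly shorter loop, and a standard straight-line homotopy within the (small, convex) spherical 2-simplex $\sigma$ realizes this replacement so that every intermediate loop has length at most $\ell(\gamma)$, hence remains short. The new loop is therefore short-homotopic to $\gamma$.

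Each such move strictly reduces the combinatorial length, so after finitely many iterations we reach a closed edge path $\gamma'$ in which no $\hat s_2$-vertex carries an $s_1$-to-$s_3$ turn. Any edge of type $s_1$ or $s_3$ on $\gamma'$ has an $\hat s_2$-endpoint, at which the path must continue with an edge of the same type; hence every such edge is part of a $2s_i$ segment and all remaining edges of $\gamma'$ have type $s_2$. Writing $\gamma'$ as a cyclic sequence of chunks (each chunk being a single $s_2$-edge or a $2s_i$ segment), the $s_2$-edges flip endpoint type between $\hat s_1$ and $\hat s_3$, while each $2s_i$ chunk preserves the endpoint type ($\hat s_3$ for $i = 1$, $\hat s_1$ for $i = 3$). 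Closure of $\gamma'$ then forces an even number of type-flips, i.e., an even number of $s_2$-edges. If $\gamma'$ happens to fail embeddedness, split it at any repeated vertex into a union of shorter embedded closed edge paths (as allowed by the convention stated just before the lemma) and apply the argument to each piece.

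The main technical point I anticipate is verifying that the corner-cutting homotopy really does remain through short loops. I would handle this by using the monotonicity of the total path length under the ``slide $v$ toward the midpoint of $uw$ along the median of $\sigma$'' homotopy; convexity of the spherical distance function on the small 2-simplex $\sigma$ makes $|uv_s| + |v_sw|$ a decreasing function of the homotopy parameter $s$, so the length of the full loop only decreases during the move and stays below $\ell(\gamma) < 2\pi$.
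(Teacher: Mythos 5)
Your proposal is correct and follows essentially the same route as the paper: cut each $s_1$-to-$s_3$ corner at an $\hat s_2$-vertex using the complete bipartite link to produce the $s_2$-edge of a spanning 2-simplex, iterate until only $2s_1$, $2s_3$, and $s_2$ pieces remain, and then run the type-flip parity count on endpoints to get an even number of $s_2$-edges. The only difference is that you explicitly verify the replacement homotopy stays short via $\beta < \alpha + \delta$ and convexity within the simplex, a point the paper leaves implicit.
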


\begin{proof}
    Let $\gamma$ be an edge path. Suppose there are adjacent edges $e_1$ and $e_3$ in $\gamma$ of type $s_1$ and $s_3$, respectively. These edges meet at a vertex $v$ in $\gamma$ of type $\hat s_2$, and give rise to a vertex of type $\hat s_3$ and $\hat s_1$, resp., in the link of $v$. Since the link of a type $\hat s_2$ vertex is complete bipartite, there is an edge in the link connecting these vertices, which corresponds to a 2-simplex containing $e_1$ and $e_3$. Let $e_2$ be the other edge of this 2-simplex, necessarily of type $s_2$. Then $\gamma$ can be shrunken by replacing the concatenation of $e_1$ and $e_3$ with $e_2$. In other words, we may assume that for any closed edge path, the two edges adjacent to an edge of type $s_1$ must be type $s_1$ and type $s_2$, and similarly the two edges adjacent to an edge of type $s_3$ must be type $s_3$ and type $s_2$. Since there are no digons, this implies such a path contains only edges of type $2s_1$, $s_2$, and $2s_3$. 

    Note that the endpoints of a segment of type $2s_1$ are vertices of type $\hat s_3$, and the endpoints of a segment of type $2s_3$ are vertices of type $\hat s_1$. But the segments of type $s_2$ have exactly one endpoint which is a vertex of type $\hat s_1$ and exactly one endpoint which is a vertex of type $\hat s_3$. Since $\hat s_i$ vertices cannot be adjacent to $\hat s_i$ vertices (for $i = 1,2,3$), this means each possible edge loop must have an even number of edges of type $s_2$. 
\end{proof}

In particular, this means every closed edge path is shrinkable to a closed edge path with an even number of edges, so we may restrict our attention to such paths. With these restrictions, we list the remaining possible edge paths. 

Suppose $\gamma$ has $2n_\alpha$ edges of type $s_1$, $2n_\beta$ edges of type $s_2$, and $2n_\delta$ edges of type $s_3$. Then $\ell(\gamma) < 2\pi$ if and only if $2n_\alpha \alpha + 2n_\beta \beta + 2n_\delta \delta < 2\pi$.  
The triples $(n_\alpha,n_\beta,n_\delta)$ of non-negative integers which provide solutions to the latter inequality are shown in Table \ref{tab:first list}.
\begin{table}[!ht]
    \centering
    \begin{align*}
    (i,0,0), &\ i = 2,3,4,5 &
    (0,i,0), &\ i = 2,3 &
    (0,0,i), &\ i = 2,3 \\
    (0,1,i), &\ i = 1,2 &
    (0,2,i), &\ i = 1  &
    (1,0,i), &\ i = 1,2,3 \\
    (1,1,i), &\ i = 0,1 &
    (1,2,0), & &
    (2,0,i), &\ i = 1,2 \\
    (2,1,i), &\ i = 0,1 &
    (3,0,1), & &
    (3,1,0)
\end{align*}
    \caption{$(n_\alpha,n_\beta,n_\delta)$ satisfying $2n_\alpha \alpha + 2n_\beta \beta + 2n_\delta \delta < 2\pi$}
    \label{tab:first list}
\end{table}

A segment of type $2s_1$ cannot be adjacent to a segment of type $2s_3$ (and vice versa), so the triples $(i,0,j)$ for any $i > 0$ and $j > 0$ cannot correspond to an embedded loop. 
By Lemma \ref{lem:intersectionofstars}, there are no embedded loops consisting of exactly two segments of type $2s_1$ or two segments of type $2s_3$. 
This shortens the list to the triples in Table \ref{tab:second shortened list}.
\begin{table}[!ht]
    \centering
\begin{align*}
    (i,0,0), &\ i = 3,4,5 &
    (0,i,0), &\ i = 2,3 &
    (0,0,3), \\
    (0,1,i), &\ i = 1,2  &
    (0,2,i), &\ i = 1   &
    (1,1,i), &\ i = 0,1 \\
    (1,2,0), & &
    (2,1,i), &\ i = 0,1 &
    (3,1,0)
\end{align*}
    \caption{A reduced list of triples $(n_\alpha,n_\beta,n_\delta)$}
    \label{tab:second shortened list}
\end{table}

We can now show how certain cycles may be shrunken to those in Figure \ref{fig:checking edge loops}. First, the 4-cycles.
The only short embedded 4-cycles which do not appear in Figure \ref{fig:checking edge loops} are those appearing in Figure \ref{fig:remaining 4-cycles}.

\begin{figure}[!ht]
    \begin{tikzpicture}[scale=0.7]
        \coordinate (s3 top)    at (0,2) ;
        \coordinate (s1 left)   at (-2,0);
        \coordinate (s2 middle) at ( 0,0) ;
        \coordinate (s1 right)  at ( 2,0) ;
        \coordinate (label)     at (0,-1.25) ;
        
        \draw (s3 top) -- (s1 left)  -- (s2 middle) -- 
            (s1 right) -- (s3 top);

        \filldraw (s3 top)    circle (0.05cm);
        \filldraw (s1 left)   circle (0.05cm);
        \filldraw (s2 middle) circle (0.05cm);
        \filldraw (s1 right)  circle (0.05cm);

        \node[label={[label distance=0mm]above: {$\hat s_3$}}] 
            at (s3 top)  {} ;
        \node[left] at (s1 left) {$\hat s_1$} ;
        \node[below] at (s2 middle) {$\hat s_2$} ;
        \node[label={[label distance=0mm]right: {$\hat s_1$}}] 
            at (s1 right)  {} ;

        \node at (label) {(a)};          
    \end{tikzpicture}
    \begin{tikzpicture}[scale=0.7]
        \coordinate (s3 top)    at (0,2) ;
        \coordinate (s1 left)   at (-2,0);
        \coordinate (s2 middle) at ( 0,0) ;
        \coordinate (s1 right)  at ( 2,0) ;
        \coordinate (label)     at (0,-1.25) ;
        
        \draw (s3 top) -- (s1 left)  -- (s2 middle) -- 
            (s1 right) -- (s3 top);

        \filldraw (s3 top)    circle (0.05cm);
        \filldraw (s1 left)   circle (0.05cm);
        \filldraw (s2 middle) circle (0.05cm);
        \filldraw (s1 right)  circle (0.05cm);

        \node[label={[label distance=0mm]above: {$\hat s_1$}}] 
            at (s3 top)  {} ;
        \node[left] at (s1 left) {$\hat s_3$} ;
        \node[below] at (s2 middle) {$\hat s_2$} ;
        \node[label={[label distance=0mm]right: {$\hat s_3$}}] 
            at (s1 right)  {} ;

        \node at (label) {(b)};          
    \end{tikzpicture}
    
    \caption{Remaining 4-cycles}
    \label{fig:remaining 4-cycles}
\end{figure}
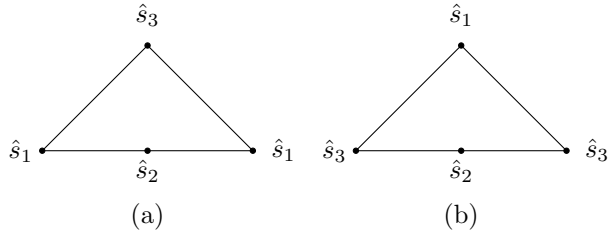

\begin{figure}[!ht]
    \begin{tikzpicture}[scale=0.7]
        \coordinate (s3 top)    at (0,2) ;
        \coordinate (s1 left)   at (-2,0);
        \coordinate (s2 middle) at ( 0,0) ;
        \coordinate (s1 right)  at ( 2,0) ;
        \coordinate (label)     at (0,-1.25) ;
        
        \draw[fill=gray!25] (s3 top) -- (s1 left)  -- (s2 middle) -- 
            (s1 right) -- (s3 top);
        \draw (s2 middle) -- (s3 top);

        \filldraw (s3 top)    circle (0.05cm);
        \filldraw (s1 left)   circle (0.05cm);
        \filldraw (s2 middle) circle (0.05cm);
        \filldraw (s1 right)  circle (0.05cm);

        \node[label={[label distance=0mm]above: {$\hat s_3$}}] 
            at (s3 top)  {} ;
        \node[left] at (s1 left) {$\hat s_1$} ;
        \node[below] at (s2 middle) {$\hat s_2$} ;
        \node[label={[label distance=0mm]right: {$\hat s_1$}}] 
            at (s1 right)  {} ;

        \node at (label) {(a)};          
    \end{tikzpicture}
    \begin{tikzpicture}[scale=0.7]
        \coordinate (s3 top)    at (0,2) ;
        \coordinate (s1 left)   at (-2,0);
        \coordinate (s2 middle) at ( 0,0) ;
        \coordinate (s1 right)  at ( 2,0) ;
        \coordinate (label)     at (0,-1.25) ;
        
        \draw[fill=gray!25] (s3 top) -- (s1 left)  -- (s2 middle) -- 
            (s1 right) -- (s3 top);
        \draw (s2 middle) -- (s3 top);

        \filldraw (s3 top)    circle (0.05cm);
        \filldraw (s1 left)   circle (0.05cm);
        \filldraw (s2 middle) circle (0.05cm);
        \filldraw (s1 right)  circle (0.05cm);

        \node[label={[label distance=0mm]above: {$\hat s_1$}}] 
            at (s3 top)  {} ;
        \node[left] at (s1 left) {$\hat s_3$} ;
        \node[below] at (s2 middle) {$\hat s_2$} ;
        \node[label={[label distance=0mm]right: {$\hat s_3$}}] 
            at (s1 right)  {} ;

        \node at (label) {(b)};          
    \end{tikzpicture}
    
    \caption{Filling the 4-cycles}
    \label{fig:filled 4-cycle path}
\end{figure}
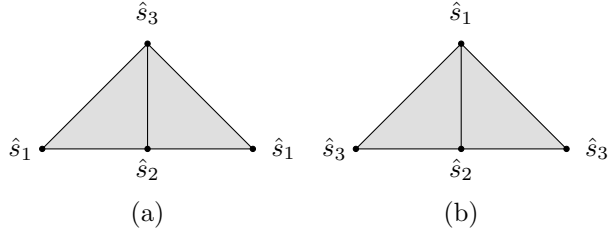

\begin{lem} \label{lemma:filling 4 cycles}
    Loops of the type in Figure \ref{fig:remaining 4-cycles} are contained in subcomplexes of the form in Figure \ref{fig:filled 4-cycle path}.
\end{lem}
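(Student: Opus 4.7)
The plan is to prove both cases by first showing that the central $\hat s_2$-vertex of the 4-cycle must be adjacent to the opposite singular vertex in $X$, and then invoking flagness (already established) to conclude that the two triangles in Figure \ref{fig:filled 4-cycle path} exist as 2-simplices of $X$. I would treat case (a) in detail, with case (b) obtained by a symmetric swap of the roles of $\hat s_1$ and $\hat s_3$.

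For case (a), label the 4-cycle as $\hat s_3^t \to \hat s_1^l \to \hat s_2^m \to \hat s_1^r \to \hat s_3^t$. The sole obstruction to producing the desired subcomplex is the edge from $\hat s_2^m$ to $\hat s_3^t$; once that edge is present, flagness gives the two required 2-simplices. If the edge is already there we are done, so assume it is not. Then $\hat s_3^t$ is not represented in $lk(\hat s_2^m, X)$; but by condition (3) this link is complete bipartite with at least one embedded 4-cycle, hence contains at least two $\hat s_3$-type vertices, and we may choose one of them, call it $z$, with $z \neq \hat s_3^t$. By the complete bipartite structure, $z$ is adjacent in $X$ to both $\hat s_1^l$ and $\hat s_1^r$.

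The key step is to form the embedded 4-cycle $\hat s_3^t \to \hat s_1^l \to z \to \hat s_1^r \to \hat s_3^t$, whose alternating $\hat s_3,\hat s_1,\hat s_3,\hat s_1$ vertex types match Figure \ref{fig:checking edge loops}(a). Embeddedness is immediate: the two $\hat s_1$-vertices differ by hypothesis on the original 4-cycle, and $z \neq \hat s_3^t$ by construction. Condition (5) then gives a subcomplex of the form in Figure \ref{fig:filling the edge loops}(a) containing this 4-cycle, and in particular a central $\hat s_2$-type vertex $w$ adjacent in $X$ to all four of $\hat s_3^t, \hat s_1^l, z, \hat s_1^r$.

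The forced identification $w=\hat s_2^m$ is the crux, and this is where Lemma \ref{lem:intersectionofstars} enters: both $w$ and $\hat s_2^m$ are $\hat s_2$-vertices whose closed stars contain both $\hat s_1^l$ and $\hat s_1^r$, and these two $\hat s_1$-vertices are non-adjacent (no two vertices of the same type are adjacent). If $w \neq \hat s_2^m$, then $St(w) \cap St(\hat s_2^m)$ would contain two non-adjacent vertices, which is impossible by Lemma \ref{lem:intersectionofstars}. Hence $w = \hat s_2^m$, so $\hat s_2^m$ is adjacent to $\hat s_3^t$, and flagness produces the two 2-simplices of Figure \ref{fig:filled 4-cycle path}(a). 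For case (b), the identical argument applies after exchanging $\hat s_1 \leftrightarrow \hat s_3$, using that $lk(\hat s_2^m,X)$ also has at least two $\hat s_1$-type vertices. The main subtlety to check carefully is that the auxiliary vertex $z$ produced from the link can always be chosen distinct from the given opposite singular vertex, which relies precisely on the ``at least one embedded 4-cycle'' clause of condition (3) forcing at least two vertices of each type in the link.
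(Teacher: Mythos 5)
Your proof is correct and follows essentially the same route as the paper's: produce a second $\hat s_3$-type vertex $z$ in the complete bipartite link of the $\hat s_2$-vertex, fill the resulting alternating 4-cycle on $\hat s_3^t, \hat s_1^l, z, \hat s_1^r$ using condition (5), and force the new central $\hat s_2$-vertex to coincide with the original one via Lemma \ref{lem:intersectionofstars}. Your explicit care in choosing $z \neq \hat s_3^t$ (via the embedded-4-cycle clause of condition (3), which forces at least two vertices of each type in the link) is a small refinement of a point the paper's proof leaves implicit.
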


\begin{proof}
    Suppose $\gamma$ is a(n embedded) loop of type (a). (The type (b) case is identical, switching the labeling of $\hat s_1$ and $\hat s_3$.) Let $v$ denote the vertex of type $\hat s_2$, $u$ the vertex of type $\hat s_2$, and $w_1,w_2$ the distinct vertices of type $\hat s_1$. 
    \[
    \begin{tikzpicture}[scale=0.7]
        \coordinate (s3 top)    at (0,2) ;
        \coordinate (s1 left)   at (-2,0);
        \coordinate (s2 middle) at ( 0,0) ;
        \coordinate (s1 right)  at ( 2,0) ;
        
        \draw (s3 top) -- (s1 left)  -- (s2 middle) -- 
            (s1 right) -- (s3 top);

        \filldraw (s3 top)    circle (0.05cm);
        \filldraw (s1 left)   circle (0.05cm);
        \filldraw (s2 middle) circle (0.05cm);
        \filldraw (s1 right)  circle (0.05cm);

        \node[label={[label distance=0mm]above: {$u$}}] 
            at (s3 top)  {} ;
        \node[left] at (s1 left) {$w_1$} ;
        \node[below] at (s2 middle) {$v$} ;
        \node[label={[label distance=0mm]right: {$w_2$}}] 
            at (s1 right)  {} ;        
    \end{tikzpicture}
    \]
    Since $lk(v)$ is complete bipartite, we can find a vertex $z$ of type $\hat s_3$ adjacent to $v$, $w_1$, and $w_2$:
    \[
    \begin{tikzpicture}[scale=0.7]
        \coordinate (s3 top)    at (0,2) ;
        \coordinate (s1 left)   at (-2,0);
        \coordinate (s2 middle) at ( 0,0) ;
        \coordinate (s1 right)  at ( 2,0) ;
        \coordinate (s3 bottom) at (0,-2);
        
        \draw (s3 top) -- (s1 left)  -- (s2 middle) -- 
            (s1 right) -- (s3 top);
        \draw[fill=gray!25] (s3 bottom) -- (s1 left)  -- (s2 middle) -- 
            (s1 right) -- (s3 bottom);
        \draw (s2 middle) -- (s3 bottom);

        \filldraw (s3 top)    circle (0.05cm);
        \filldraw (s1 left)   circle (0.05cm);
        \filldraw (s2 middle) circle (0.05cm);
        \filldraw (s1 right)  circle (0.05cm);
        \filldraw (s3 bottom)  circle (0.05cm);

        \node[label={[label distance=0mm]above: {$u$}}] 
            at (s3 top)  {} ;
        \node[left] at (s1 left) {$w_1$} ;
        \node[below right] at (s2 middle) {$v$} ;
        \node[label={[label distance=0mm]right: {$w_2$}}] 
            at (s1 right)  {} ;        
        \node[label={[label distance=0mm]below: {$z$}}] 
            at (s3 bottom)  {} ;   
    \end{tikzpicture}
    \]
    Now the induced cycle on the vertices $u$, $w_1$, $z$, and $w_2$ is an edge path of the type in Figure \ref{fig:checking edge loops}(a), thus can be filled by a vertex $v'$ of type $\hat s_2$. If $v \not= v'$, this would contradict Lemma \ref{lem:intersectionofstars}. Thus $\gamma$ can be completed to 
    \[
    \begin{tikzpicture}[scale=0.7]
        \coordinate (s3 top)    at (0,2) ;
        \coordinate (s1 left)   at (-2,0);
        \coordinate (s2 middle) at ( 0,0) ;
        \coordinate (s1 right)  at ( 2,0) ;
        \coordinate (s3 bottom) at (0,-2);
        
        \draw[fill=gray!25] (s3 top) -- (s1 left)  -- (s2 middle) -- 
            (s1 right) -- (s3 top);
        \draw[fill=gray!25] (s3 bottom) -- (s1 left)  -- (s2 middle) -- 
            (s1 right) -- (s3 bottom);
        \draw (s3 top) -- (s3 bottom);

        \filldraw (s3 top)    circle (0.05cm);
        \filldraw (s1 left)   circle (0.05cm);
        \filldraw (s2 middle) circle (0.05cm);
        \filldraw (s1 right)  circle (0.05cm);
        \filldraw (s3 bottom)  circle (0.05cm);

        \node[label={[label distance=0mm]above: {$u$}}] 
            at (s3 top)  {} ;
        \node[left] at (s1 left) {$w_1$} ;
        \node[below right] at (s2 middle) {$v$} ;
        \node[label={[label distance=0mm]right: {$w_2$}}] 
            at (s1 right)  {} ;        
        \node[label={[label distance=0mm]below: {$z$}}] 
            at (s3 bottom)  {} ;   
    \end{tikzpicture} \qedhere
    \]
\end{proof}

\begin{lem} \label{lem:all 6 cycles are short hom}
    Every short embedded closed edge path in $D(B_3)$ of edge-length $6$ is short-homotopic to an embedded closed edge path of the type in Figure \ref{fig:checking edge loops}(b).
\end{lem}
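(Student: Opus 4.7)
The plan is to enumerate the triples $(n_\alpha,n_\beta,n_\delta)$ from Table \ref{tab:second shortened list} with $n_\alpha+n_\beta+n_\delta=3$, i.e., those giving short embedded edge 6-cycles, and to reduce each one short-homotopically to the case $(0,3,0)$, which is precisely the hexagon of Figure \ref{fig:checking edge loops}(b). The relevant triples are $(3,0,0)$, $(0,3,0)$, $(0,0,3)$, $(2,1,0)$, $(1,2,0)$, $(0,2,1)$, $(0,1,2)$, and $(1,1,1)$; only the last seven require work.

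The principal tool is a \emph{segment substitution}. Given a segment $v_0 - v_1 - v_2$ of type $2s_1$ in the cycle (so $v_1$ has type $\hat s_2$ and $v_0, v_2$ have type $\hat s_3$), condition (3) of Theorem \ref{thm:B3 CAT1 Criteria} makes $lk(v_1, X)$ complete bipartite, so any $\hat s_1$-vertex $u$ of the link is automatically adjacent there to both of the $\hat s_3$-vertices representing $v_0$ and $v_2$. This produces two 2-simplices $\{v_0,v_1,u\}$ and $\{v_1,v_2,u\}$ glued along the diagonal $v_1 - u$. Sweeping the path $v_0 - v_1 - v_2$ across this quadrilateral, parameterized by a point $p_t$ moving from $v_1$ to $u$ along the diagonal, yields a homotopy to the path $v_0 - u - v_2$, whose two edges are both of type $s_2$. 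A direct application of the spherical law of cosines, using that the internal angle at $v_1$ is $\pi/2$, shows that each of $|v_0 - p_t|$ and $|p_t - v_2|$ increases monotonically from $\alpha$ to $\beta$, so the path length during the homotopy increases monotonically from $2\alpha$ to $2\beta$. The symmetric move handles segments of type $2s_3$.

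Iteratively applying this substitution to every $2s_1$ and $2s_3$ segment decreases $n_\alpha$ and $n_\delta$ to zero while increasing $n_\beta$ correspondingly, and since each substitution replaces two edges with two edges the edge count remains six throughout. The final loop is therefore a 6-cycle of type $(0,3,0)$; because $s_2$-edges connect $\hat s_1$-vertices to $\hat s_3$-vertices, its vertices alternate between the two singular types, matching Figure \ref{fig:checking edge loops}(b) exactly. The monotonicity of each substitution guarantees that no intermediate loop is longer than the final length $6\beta \approx 5.73 < 2\pi$, so the entire homotopy stays short.

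The main obstacle will be verifying that the final hexagon is embedded. Lemma \ref{lem:intersectionofstars}, together with the bipartite link structure at the intermediate $\hat s_2$-vertices, rules out most potential coincidences among the new vertices $u$ and the original vertices, following the same pattern as in the proof of Lemma \ref{lemma:filling 4 cycles}. If a degenerate case does arise (for instance, when two newly introduced $\hat s_1$-vertices coincide), the non-embedded hexagon splits into an embedded 4-cycle of the type in Figure \ref{fig:checking edge loops}(a) together with a backtracking segment; the 4-cycle is shrinkable by condition (5) via Figure \ref{fig:filling the edge loops}(a), so the original 6-cycle is itself shrinkable. It is then short-homotopic to any hexagon of type (b) present in $X$, all of which are themselves shrinkable by condition (5) via Figure \ref{fig:filling the edge loops}(b), by transitivity of $\ssim$.
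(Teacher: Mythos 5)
Your proposal is correct and follows essentially the same route as the paper: the key move in both is to replace each $2s_1$ or $2s_3$ segment by the opposite pair of $s_2$-edges across the filled quadrilateral at the intermediate $\hat s_2$-vertex, with the homotopy monotone in length, terminating at the $(0,3,0)$ hexagon. Your explicit law-of-cosines verification of monotonicity and your discussion of embeddedness of the resulting hexagon are welcome additions of detail that the paper leaves implicit (modulo the harmless slip that $(3,0,0)$, not $(0,3,0)$, is among the seven triples requiring work).
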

\begin{proof}
    Let $\delta$ denote a segment which is the union of two adjacent edges of $X$ meeting at a vertex $v$ of type $\hat s_2$, such that the remaining two vertices $w_1$ and $w_2$ are both type $\hat s_1$ or both type $\hat s_3$. Then there is a vertex $v'$ in the link of $v$ which is adjacent to both $w_1$ and $w_2$ and has the opposite type (that being type $\hat s_3$ or $\hat s_1$, respectively). Let $\delta'$ denote the edge path between the vertices $w_1$, $v'$, and $w_2$.
    This results in $\delta$ and $\delta'$ bounding a subcomplex of $X$ of the form in Figure 
    \ref{fig:filled 4-cycle path}(b) or Figure 
    \ref{fig:filled 4-cycle path}(c).
    In either case, it is clear that $\delta$ is homotopic to $\delta'$ through a homotopy which is monotone in the lengths of its paths. In other words, we may replace an ``$\hat s_1$-$\hat s_2$-$\hat s_1$'' (or, an ``$\hat s_3$-$\hat s_2$-$\hat s_3$'') segment with an ``$\hat s_1$-$\hat s_3$-$\hat s_1$'' (resp., an ``$\hat s_3$-$\hat s_1$-$\hat s_3$'') segment in a ``monotone'' fashion.
    
    As a consequence of this, if $\gamma$ is a short edge loop containing a subpath $\delta$ as above, then $\gamma$ is short homotopic to the path $\gamma'$ obtained by replacing $\delta$ with $\delta'$ in $\gamma$. Any 6-cycle $\gamma$ in Table \ref{tab:second shortened list} which is not the 6-cycle in Figure \ref{fig:checking edge loops} contains a subpath $\delta$ of this form. After repeating this procedure on each such subpath, we see that $\gamma$ is short homotopic to a 6-cycle of the form in Figure \ref{fig:checking edge loops}.
\end{proof}

The only edge paths of length 8  that don't appear in Figure \ref{fig:checking edge loops} are those of the type seen in Figure \ref{fig:short 8 cycles}.

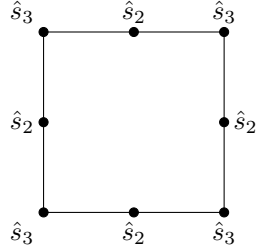
\begin{figure}[!ht]
    \centering
   \begin{tikzpicture}[scale=1.2]
        \coordinate (s31) at (0,0) ;
        \coordinate (s21) at (1,0) ;
        \coordinate (s32) at (2,0);
        \coordinate (s1) at (2,-1) ;
        \coordinate (s33) at (2,-2) ;
        \coordinate (s22) at (1,-2) ;
        \coordinate (s34) at (0,-2) ;
        \coordinate (s23) at (0,-1) ;
        
        \filldraw (s31) circle (0.05cm);
        \filldraw (s21) circle (0.05cm);
        \filldraw (s32) circle (0.05cm);
        \filldraw (s1) circle (0.05cm);
        \filldraw (s33) circle (0.05cm);
        \filldraw (s22) circle (0.05cm);
        \filldraw (s34) circle (0.05cm);
        \filldraw (s23) circle (0.05cm);

        \draw (s31) -- (s21)  -- (s32) -- (s1) --(s33) -- (s22) -- (s34)  -- (s23) -- (s31);

        \node[above left] at (s31) {$\hat s_3$} ;
        \node[above] at (s21) {$\hat s_2$} ;
        \node[above] at (s32) {$\hat s_3$} ;
        \node[right] at (s1)  {$\hat s_2$} ;
        \node[below] at (s33) {$\hat s_3$} ;
        \node[below] at (s22) {$\hat s_2$} ;
        \node[below left] at (s34) {$\hat s_3$} ;
        \node[left] at (s23) {$\hat s_2$} ;
    \end{tikzpicture}
    \caption{The other 8-cycle}
    \label{fig:short 8 cycles}
\end{figure}

The proof of the following Lemma is identical to the proof of Lemma \ref{lem:all 6 cycles are short hom}.
\begin{lem} \label{lem:8-cycle shrink}
    An embedded 8-cycle of the form in Figure \ref{fig:short 8 cycles} is short-homotopic to an embedded 8-cycle of the form in Figure \ref{fig:checking edge loops}.
\end{lem}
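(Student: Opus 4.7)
The plan is to reuse verbatim the ``monotone replacement'' trick from the proof of Lemma \ref{lem:all 6 cycles are short hom}, now applied to the 8-cycle of Figure \ref{fig:short 8 cycles}. Since that cycle alternates $\hat s_3$ and $\hat s_2$ all the way around (with all eight edges of type $s_1$), any consecutive triple of vertices yields a subpath $\delta = w_1 - v - w_2$ with $v$ of type $\hat s_2$ and $w_1, w_2$ both of type $\hat s_3$, so I would just fix one such $\delta$ and leave the rest of the cycle alone.

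Next I would run the replacement construction. By condition (3), $lk(v)$ is complete bipartite, so there is a vertex $v'$ of type $\hat s_1$ in $lk(v)$ that is adjacent to both $w_1$ and $w_2$ in $X$. Condition (5), together with the filling shown in Figure \ref{fig:filling the edge loops}(a), then guarantees that the 4-cycle $w_1 - v - w_2 - v'$ bounds a subcomplex of the corresponding filled form. Exactly as in Lemma \ref{lem:all 6 cycles are short hom}, inside this filled 4-cycle the segment $\delta$ (of length $2\alpha$) is homotopic rel endpoints to $\delta' = w_1 - v' - w_2$ (of length $2\beta$) through a family whose lengths interpolate monotonically between $2\alpha$ and $2\beta$. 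Substituting $\delta'$ for $\delta$ in the original 8-cycle $\gamma$ thus yields a loop $\gamma'$ with $\gamma \ssim \gamma'$, provided every intermediate loop is short.

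The one thing I actually need to check by hand is that length bound: the intermediate loops have length at most $\max(\ell(\gamma), \ell(\gamma')) = 6\alpha + 2\beta$, and using $\alpha \approx 0.615$, $\beta \approx 0.955$ this is approximately $5.60 < 2\pi$, so the homotopy is indeed short. Finally, I would identify the resulting cycle: $\gamma'$ has vertex sequence $\hat s_3, \hat s_1, \hat s_3, \hat s_2, \hat s_3, \hat s_2, \hat s_3, \hat s_2$, which after a cyclic rotation by two steps is exactly the labeling of Figure \ref{fig:checking edge loops}(c). Embeddedness of $\gamma'$ is automatic: the new vertex $v'$ has type $\hat s_1$ while every other vertex of $\gamma$ has type $\hat s_2$ or $\hat s_3$, so $v'$ cannot coincide with any of them, and the remaining vertices are unchanged from the (embedded) loop $\gamma$.

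I do not expect any real obstacle, since the argument is structurally identical to Lemma \ref{lem:all 6 cycles are short hom} as the paper itself advertises; the only step requiring care is the short length bookkeeping in the middle paragraph, which comfortably succeeds because $6\alpha + 2\beta$ sits strictly below $2\pi$.
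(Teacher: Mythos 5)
Your proposal is correct and is exactly the argument the paper intends: the paper's ``proof'' of this lemma is literally the statement that it is identical to the proof of Lemma \ref{lem:all 6 cycles are short hom}, i.e., one monotone replacement of an ``$\hat s_3$--$\hat s_2$--$\hat s_3$'' subpath by an ``$\hat s_3$--$\hat s_1$--$\hat s_3$'' subpath inside a filled 4-cycle (Lemma \ref{lemma:filling 4 cycles}). Your explicit length check $6\alpha + 2\beta < 2\pi$ and the type-based verification of embeddedness are correct details that the paper leaves implicit.
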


Finally, the only short 10-cycle is the one appearing in Figure \ref{fig:bad 10 cycle}. However, we have assumed that such a cycle is never embedded. Thus we can complete the proof of Theorem \ref{thm:B3 CAT1 Criteria}.

\begin{proof}[Proof (of Theorem \ref{thm:B3 CAT1 Criteria})]
    Let $\gamma_1$ be an arbitrary embedded short loop. Using Theorem \ref{prop:edgereduction}, we can find an embedded short edge loop $\gamma_2$ such that $\gamma_1 \ssim \gamma_2$. Then using Lemmas \ref{lemma:filling 4 cycles}, \ref{lem:all 6 cycles are short hom}, and \ref{lem:8-cycle shrink}, we can find an embedded short loop $\gamma_3$ of the form in Figure \ref{fig:checking edge loops} with $\gamma_2 \ssim \gamma_3$. Condition (5) guarantees that $\gamma_3 \ssim 0$, therefore $\gamma_1 \ssim 0$. 
\end{proof}

\section{Background on Artin groups and Coxeter groups}
\label{sec:Definitions}

We can now focus on the main application of this result: the Deligne complex of an Artin group of type $B_3$. In this section, we introduce the notation that will be used throughout the rest of the paper.
 
A \emph{Coxeter-Dynkin diagram} is a finite simplicial graph $\Gamma$ equipped with a labeling $m : E(\Gamma) \to \mathbb{Z}_{\geq 3} \cup \{\infty\}$ of edges. Since $\Gamma$ is simplicial, every edge $e \in E(\Gamma)$ can be written as an unordered pair of vertices $\{s,t\}$, and sometimes we write $m(s,t) = m(t,s) = m(\{s,t\}) = m(e)$. If $\{s,t\} \not\in E(\Gamma)$, define $m(s,t) = m(t,s) = 2$. When illustrating a Coxeter-Dynkin diagram, it is common to omit labels on edges $e$ with $m(e) = 3$.

Such a diagram defines a Coxeter group
\[
    W(\Gamma) = \langle\, s \in V(\Gamma) \mid (st)^{m(s,t)} = 1 \text{ if } m(s,t) \not= \infty, s^2 = 1 \,\rangle
\]
and an Artin group
\[
    A(\Gamma) = \langle\, s \in V(\Gamma) \mid \underbrace{sts\ldots}_{m(s,t) \text{ letters}} = \underbrace{tst\ldots}_{m(s,t) \text{ letters}} \text{ if } m(s,t) \not= \infty \,\rangle.
\]
We call a Coxeter-Dynkin diagram $\Gamma$ \emph{spherical} and an Artin group $A(\Gamma)$ \emph{spherical-type} if $W(\Gamma)$ is finite.

We will write $\Lambda \leq \Gamma$ to denote a full subgraph $\Lambda$ of $\Gamma$ (i.e., if two vertices $x,y \in V(\Lambda)$ are adjacent in $\Gamma$, then they are also adjacent in $\Lambda$) which inherits the edge labeling of $\Gamma$. Then $A(\Lambda)$ is an Artin group which is isomorphic to the subgroup of $\Gamma$ generated by $V(\Lambda)$ by sending the generators of $A(\Lambda)$ to the generators coming from the same vertex viewed inside $\Gamma$ \cite{lek1983homotopy}. By abuse of notation, we will identify these two groups. A similar statement holds for the Coxeter groups $W(\Lambda)$ and $W(\Gamma)$.
A particular type of full subgraphs are the \emph{maximal} full subgraphs. If $s \in V(\Gamma)$, then we let $\hat s$ denote the full subgraph of $\Gamma$ on vertex set $V(\Gamma) \setminus s$. By abusing notation, we sometimes let $\hat s = A(\hat s)$ or $\hat s = W(\hat s)$; the meaning will be obvious by context.

For any Coxeter-Dynkin diagram $\Gamma$, we may associate a number of relevant cell complexes, two of which are the \emph{Deligne complex} and the \emph{Davis-Moussong complex}. Let $G = A(\Gamma)$ or $W(\Gamma)$. 
We define 
\begin{align*}
\mathcal S^f &= \{\,\Lambda \leq \Gamma : \Lambda \text{ is spherical} \,\}, and \\
G\mathcal S^f &= \{\, g A(\Lambda) : g \in G, \Lambda \in \mathcal S^f \,\},
\end{align*}
both partially ordered by standard inclusion. Since $G\mathcal S^f$ is a poset, we can form its derived poset $(G\mathcal S^f)'$, the set of linearly ordered subsets (or \emph{chains}) of $G\mathcal S^f$, itself ordered by inclusion. Then $(G\mathcal S^f)'$ is an abstract simplicial complex, so we may form its geometric realization $|(G\mathcal S^f)'|$. When $G = W(\Gamma)$, we call this the \emph{Davis-Moussong complex} $\Sigma(\Gamma)$, and when $G = A(\Gamma)$, we call this the \emph{Deligne complex} $\Phi(\Gamma)$. (Note that in some literature, $\Phi(\Gamma)$ is called the \emph{modified Deligne complex}.) In either case, $G$ acts on $|(G\mathcal S^f)'|$ with strict fundamental domain isomorphic to $|(\mathcal S^f)'|$. A discussion of various natural metrics on $\Phi(\Gamma)$ and $\Sigma(\Gamma)$ are discussed in \cite{cd1995k}. We will be interested in the \emph{Moussong metric}, which we will not define explicitly here for reasons that will become evident. (The full definition can be found in \cite[\S 4.4]{cd1995k}.)

The next of the complexes are the \emph{Artin complex} and the \emph{Coxeter complex}. Again, let $G$ denote either $A(\Gamma)$ or $W(\Gamma)$, and define a simplicial complex $\Delta = \Delta(\Gamma)$ as follows. The vertex set of $\Delta$ is 
\[
\{\, g\hat s : g \in G, s \text{ a vertex of } \Gamma \,\},
\]
which is the collection of cosets of maximal standard parabolic subgroups of $G$. Note that every vertex $v$ has a well-defined ``type'', that is, a \emph{unique} vertex $s \in V(\Gamma)$ such that $v = g \hat s$; when this equality holds, we will say that $v$ is \emph{type $\hat s$}. 
A collection $\{g_0\hat s_1, \dots, g_n \hat s_n\}$ of vertices span an $n$-simplex of $\Delta$ if and only if 
\[
\bigcap g_i \hat s_i \not= \varnothing.
\]
When $G = A(\Gamma)$, we call $\Delta$ the \emph{Artin complex} and denote it by $D(\Gamma)$. (Note that in some literature, $D(\Gamma)$ is called the \emph{spherical Deligne complex}, or simply the \emph{Deligne complex}. However, we reserve the latter name for our $\Phi(\Gamma)$.) When $G = W(\Gamma)$, we call $\Delta$ the \emph{Coxeter complex} and denote it by $C(\Gamma)$.

The group $G$ (for $G = A(\Gamma)$ or $W(\Gamma)$) acts naturally on $\Delta$ by left multiplication.
This action has a strict fundamental domain of a $(|V(\Gamma)|-1)$-simplex with exactly one vertex of each type.
If $\Gamma$ is spherical, then $C(\Gamma)$ has a metric which makes it isometric to a standard round sphere in $\mathbb{R}^{|V(\Gamma)|-1}$ (hence the terminology ``spherical'' Coxeter-Dynkin diagram). Since the fundamental domains of the actions on $D(\Gamma)$ and $C(\Gamma)$ are isomorphic, when $\Gamma$ is spherical we may endow $D(\Gamma)$ with a metric under which this isomorphism of fundamental domains becomes an isometry. By slight abuse of notation, we will call this metric on $D(\Gamma)$ the \emph{(spherical) Moussong metric}. The key relationship between these various notions of Moussong metric is the following.

\begin{prop} \label{prop:Moussong reduction} \textup{\cite{cd1995k}}
    The Moussong metric on $\Phi(\Gamma)$ is $\cat(0)$ if and only if for every connected $\Lambda \in \mathcal S^f_\Gamma$, the spherical Moussong metric on $D(\Lambda)$ is $\cat(1)$.
\end{prop}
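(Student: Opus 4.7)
The plan is to combine the Cartan-Hadamard theorem with a careful analysis of the vertex links of $\Phi(\Gamma)$. First, I would take as a separate input that $\Phi(\Gamma)$ is simply connected; this is a classical consequence of how $\Phi(\Gamma)$ is built from copies of its strict fundamental domain $|(\mathcal S^f)'|$ (indeed, the construction admits a van Kampen-style analysis in which the cosets $gA(\Lambda)$ give exactly the relations of $A(\Gamma)$, and the attaching maps are over contractible subcomplexes). Granted simple connectivity, the Cartan-Hadamard theorem \cite{bridson2013metric} reduces the Moussong metric on $\Phi(\Gamma)$ being $\cat(0)$ to its being locally $\cat(0)$. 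Since $\Phi(\Gamma)$ is a piecewise Euclidean simplicial complex with only finitely many isometry types of simplices (bounded by the finite poset $(\mathcal S^f)'$), Bridson--Haefliger then equates local $\cat(0)$-ness with every vertex link being $\cat(1)$ in its induced piecewise spherical metric.

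Next I would identify those links. Vertices of $\Phi(\Gamma)$ are cosets $gA(\Lambda)$ with $\Lambda \in \mathcal S^f$. Using the poset structure on $A(\Gamma)\mathcal S^f$, the link of $gA(\Lambda)$ splits as a spherical join of a ``descending'' piece, indexed by chains of cosets $ghA(\Lambda'') \subsetneq gA(\Lambda)$ with $h \in A(\Lambda)$ and $\Lambda'' \subsetneq \Lambda$, and an ``ascending'' piece, indexed by chains $gA(\Lambda) \subsetneq gA(\Lambda')$ with $\Lambda \subsetneq \Lambda' \in \mathcal S^f$. The descending piece, equipped with the induced piecewise spherical metric, is canonically isometric to $D(\Lambda)$ with its spherical Moussong metric; the ascending piece is the order complex on the poset of strictly larger spherical subdiagrams containing $\Lambda$, and one checks it is finite, piecewise spherical, and $\cat(1)$ for combinatorial reasons (the analogous calculation in the Coxeter group setting, via the spherical Coxeter complex of a residue, carries over).

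Now I would invoke the standard fact that a spherical join $X * Y$ is $\cat(1)$ if and only if both $X$ and $Y$ are $\cat(1)$. Combined with the link decomposition, this reduces local $\cat(0)$-ness of $\Phi(\Gamma)$ to $\cat(1)$-ness of $D(\Lambda)$ for each $\Lambda \in \mathcal S^f$. Finally, if $\Lambda$ has connected components $\Lambda_1, \dots, \Lambda_k$, then $A(\Lambda) \cong A(\Lambda_1) \times \cdots \times A(\Lambda_k)$ and the Artin complex decomposes as a spherical join $D(\Lambda) \cong D(\Lambda_1) * \cdots * D(\Lambda_k)$, so $\cat(1)$-ness again reduces to the connected factors, which is where the ``connected $\Lambda$'' hypothesis comes from. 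The main obstacle is making the link identification precise: verifying that the induced piecewise spherical metric on the descending piece of the link of $gA(\Lambda)$ really coincides with the spherical Moussong metric on $D(\Lambda)$. Essentially all of the geometric content of the statement is packed into this verification, and it requires unpacking the Moussong metric on $\Phi(\Gamma)$ cell by cell; the calculation appears in \cite[\S 4.4]{cd1995k}, which I would follow.
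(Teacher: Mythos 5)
The paper does not prove this proposition itself — it is quoted directly from Charney--Davis \cite{cd1995k} — and your outline is essentially a faithful reconstruction of the argument given there: simple connectivity plus Cartan--Hadamard, the join decomposition of the link of $gA(\Lambda)$ into the ascending part (handled by Moussong's lemma, which is the precise content behind your ``combinatorial reasons'') and the descending part isometric to $D(\Lambda)$, and the further join splitting of $D(\Lambda)$ over the connected components of $\Lambda$. Your approach is correct and coincides with the cited proof, with the genuinely technical step (the metric identification of the descending link with the spherical Moussong metric on $D(\Lambda)$) correctly located in \cite[\S 4.4]{cd1995k}.
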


If $\Gamma$ is a diagram with the property that every $\Lambda \in \mathcal S^f_\Gamma$ has $|V(\Lambda)| \leq n$, then we call each of $\Gamma$, $W_\Gamma$, and $A_\Gamma$  \emph{$n$-dimensional}. (In this case, the Deligne complex and Davis-Moussong complex are $n$-dimensional simplicial complexes.)

\section{Geometry of \texorpdfstring{$D(B_n)$}{D(B\_n)}}
\label{sec:geometry}

To fix notation, we let $A_n$ denote the Coxeter-Dynkin diagram 
\[
\begin{tikzpicture}
    \coordinate (t1) at (0,0);
    \coordinate (t2) at (1,0);
    \coordinate (t3) at (2,0);
    \coordinate (t4) at (3.5,0);
    
    \filldraw (t1) circle (0.05cm);
    \draw (t1) -- (2.3,0);
    \filldraw (t2) circle (0.05cm);
    \filldraw (t3) circle (0.05cm);
    \node at (2.75,0) {. . .};
    \filldraw(t4) circle (0.05cm);
    \draw (3.25,0) -- (t4);

    \node[below] at (t1) {$t_1$};
    \node[below] at (t2) {$t_2$};
    \node[below] at (t3) {$t_3$};
    \node[below] at (t4) {$t_n$};
\end{tikzpicture}
\]
and let $B_n$ denote the Coxeter-Dynkin diagram
\[
\begin{tikzpicture}
    \coordinate (s1) at (0,0);
    \coordinate (s2) at (1,0);
    \coordinate (s3) at (2.5,0);
    \coordinate (s4) at (3.5,0);
    
    \filldraw (s1) circle (0.05cm);
    \filldraw (s2) circle (0.05cm);
    \filldraw (s3) circle (0.05cm);
    \filldraw (s4) circle (0.05cm);
    \draw (s1) -- (s2);
    \draw (s2) -- (1.3,0);
    \draw (2.25,0) -- (s3);
    \draw (s3) -- node[above] {$4$} (s4);

    \node at (1.75,0) {. . .};
    
    \node[below] at (s1) {$s_1$};
    \node[below] at (s2) {$s_2$};
    \node[below] at (s3) {$s_{n-1}$};
    \node[below] at (s4) {$s_n$};
\end{tikzpicture}
\]
We define a partial order on the vertices of $D(A_n)$ and $D(B_n)$ as follows. Let $u = s$ or $t$. For cosets $g_1 \hat u_{i_1}$ and $g_2 \hat u_{i_2}$, we say $g_1 \hat u_{i_1} \leq g_2 \hat u_{i_2}$ if $g_1 \hat u_{i_1} \cap g_2 \hat u_{i_2} \not= \varnothing$ and $i_1 \leq i_2$. 
To either vertex set we may formally adjoin a minimal element $0$ satisfying $0 < g \hat u_i$ for all $g$ and all $i$; we will denote the resulting poset by $D_0(A_n)$ and $D_0(B_n)$, respectively. One of the key technical lemmas we need is the following.

\begin{prop} \emph{\cite[Prop.~6.6]{haettel2023lattices}} \label{6.6}
    The poset $D_0(B_n)$ is a graded semi-lattice of rank $n$ with minimum $0$, such that any pairwise upper bounded subset of $D_0(B_n)$ has a join.
\end{prop}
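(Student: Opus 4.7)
The plan is to verify three things: (i) the relation on $D_0(B_n)$ is a well-defined graded partial order with rank function $\mathrm{rk}(g\hat s_i) = i$ (and $\mathrm{rk}(0) = 0$); (ii) every pair of elements admits a meet; (iii) every pairwise upper bounded subset admits a join. Reflexivity and antisymmetry of the relation are immediate, but transitivity is not: if $g_1\hat s_{i_1} \cap g_2 \hat s_{i_2}$ and $g_2\hat s_{i_2} \cap g_3 \hat s_{i_3}$ are nonempty with $i_1 \leq i_2 \leq i_3$, we must show $g_1 \hat s_{i_1} \cap g_3 \hat s_{i_3}$ is nonempty. This should follow from the general structural fact that the collection of parabolic cosets in a spherical-type Artin group is closed under nonempty intersection, combined with the extra control afforded by the linear ordering of the vertices of the $B_n$ diagram. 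Gradedness would then be verified by exhibiting explicit maximal chains of length $i_2 - i_1$ between any two comparable elements of ranks $i_1 < i_2$, using intermediate parabolic cosets obtained from the intersection structure.

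For properties (ii) and (iii), the cleanest route is to leverage the embedding $A(B_n) \hookrightarrow A(A_{2n-1})$ that the paper establishes later in this section via mapping class groups. This embedding identifies $A(B_n)$ with the fixed subgroup of an involution $\iota$ of $A(A_{2n-1})$ induced by the folding automorphism of the $A_{2n-1}$ diagram. Accordingly, $D_0(B_n)$ should be identified with the subposet of $D_0(A_{2n-1})$ arising from $\iota$-invariant parabolic cosets. The corresponding statement for $A(A_{2n-1})$ is geometrically transparent: since $A(A_{2n-1})$ is the braid group on $2n$ strands, cosets of maximal parabolic subgroups correspond to isotopy classes of essential simple arcs in a $2n$-marked disk, with nesting giving the order; meets correspond to canonical "smallest arc containing both configurations" operations, and joins of jointly realizable (disjoint) families come from union. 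One then checks that the meet in $D_0(A_{2n-1})$ of two $\iota$-invariant cosets is itself $\iota$-invariant (by uniqueness of the meet), and similarly for joins, so the semi-lattice structure restricts to the fixed subposet.

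The main obstacle I expect is the rank bookkeeping. An element of $D_0(B_n)$ of rank $i$ does not in general correspond to an element of $D_0(A_{2n-1})$ of the same rank, because the two ends of the $B_n$ diagram behave asymmetrically: $\hat s_1$ yields a $B_{n-1}$ subdiagram (folded from a connected subdiagram of $A_{2n-1}$ through the axis of $\iota$) while $\hat s_n$ yields an $A_{n-1}$ subdiagram (folded from a symmetric pair of disjoint $A_{n-1}$'s in $A_{2n-1}$). Consequently one must carefully match $\iota$-invariant arc configurations in the $2n$-marked disk — distinguishing those that cross the axis of symmetry of $\iota$ from those that appear as symmetric pairs — to the correct rank in $D_0(B_n)$, and verify that maximal $\iota$-invariant chains in $D_0(A_{2n-1})$ correspond to maximal chains of length exactly $n$ in $D_0(B_n)$. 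This case analysis, rather than the abstract lattice argument, is where the bulk of the technical work lies.
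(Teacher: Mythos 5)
First, a point of comparison: the paper does not prove this proposition at all --- it is quoted verbatim from \cite[Prop.~6.6]{haettel2023lattices}, and the paper only extracts from that proof the maps $\phi$, $\psi$, and $\sigma$ for later use. Your overall strategy of reducing to $D_0(A_{2n-1})$ via the folding embedding and then restricting the lattice operations to the image of $\psi$ does match the strategy of the cited proof, so at the level of architecture you are on the right track.

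There is, however, a concrete error in how you propose to carry out the restriction. The image of $\psi$ is \emph{not} the subposet of $\sigma$-invariant cosets: the involution $\sigma$ is \emph{order-reversing} on $D_0(A_{2n-1})$ and sends a vertex of type $\hat t_i$ to one of type $\hat t_{2n-i}$, so for $i<n$ no vertex in the image of $\psi$ is fixed by $\sigma$. The correct characterization, recorded in this paper as Lemma \ref{cor:lessiffimage}, is that $g\hat t_i$ lies in the image of $\psi$ if and only if $g\hat t_i \le \sigma(g\hat t_i)$. Your ``by uniqueness of the meet'' argument therefore does not apply as stated; it must be replaced by the observation that $\sigma$, being an order-reversing bijection, exchanges joins and meets, so that $u\le\sigma(u)$ and $v\le\sigma(v)$ give $u\vee v\le\sigma(u)\wedge\sigma(v)=\sigma(u\vee v)$, together with a check that the type of $u\vee v$ is at most $n$. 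More importantly, establishing the characterization itself is where the real content lies: the converse direction requires Garside normal forms in the positive monoid and the identification of the elements $g$ with $\sigma(g)=g$ as the image of $\phi$ (\cite[Thm.~4]{crisp2000symmetrical}); see the proof of Lemma \ref{cor:lessiffimage}. Your proposal replaces this with an appeal to uniqueness that gives nothing here. Finally, the ``rank bookkeeping'' you flag as the main obstacle is a non-issue: $\psi$ sends type $\hat s_i$ to type $\hat t_i$ for every $i\le n$, so ranks are preserved on the nose, and the asymmetry between $\hat s_1$ and $\hat s_n$ as subdiagrams plays no role.
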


Within the proof of this Proposition, three important maps are defined, the first of which is the morphism $\phi : A(B_n) \to A(A_{2n-1})$ given by
\begin{align*}
    s_i &\mapsto t_it_{2n-i} \text{ for } i \leq n-1, \\
    s_n &\mapsto t_n.
\end{align*}

The second important map defined within the proof of Proposition \ref{6.6} is the extension of $\phi$ to the Deligne complexes $\psi : D(B_n) \to D(A_{2n-1})$ by defining $\psi(g\hat s_i) = \phi(g)\hat t_i$ on the vertices. (This also induces a map $D_0(B_n) \to D_0(B_n)$ by sending $0$ to $0$.) This map is well-defined and preserves the ordering on the respective Deligne complexes.

The final map that we need that is defined within the proof of Proposition \ref{6.6} is the involution $\sigma : A(A_{2n-1}) \to A(A_{2n-1})$ by $t_i \mapsto t_{2n-i}$. 
Then $\sigma$ extends to an order-reversing involution on $D(A_{2n-1})$ by $\sigma(g \hat t_i) = \sigma(g)\hat t_{2n-i}$. 
Notice that for $g \in A(B_n)$,  $\sigma(\psi(g \hat s_i)) = \phi(g) \hat t_{2n-i}$.

Haettel notes that $\phi$ is injective.  Previous proofs of this use normal forms and Garside theory to initially show that $\phi$ restricted to positive monoids is injective.  (See, for example, \cite{crisp1997injective}.)  We provide a novel proof using mapping class groups.  The authors thank Dan Margalit for the idea and Johanna Mangahas for helping with the technical details.  

\begin{prop}\label{gggg2}
 The map $\phi: A(B_n)\to A(A_{2n-1})$ is a an injective homomorphism.  
\end{prop}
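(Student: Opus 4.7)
The statement has two parts: $\phi$ is a homomorphism, and $\phi$ is injective. The homomorphism part reduces to a direct verification that the images $\phi(s_i) = t_i t_{2n-i}$ (for $i < n$) and $\phi(s_n) = t_n$ satisfy the defining relations of $A(B_n)$ inside $A(A_{2n-1})$; this is a handful of short computations using only the braid relations among the $t_k$, so the real content is the injectivity assertion.

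To prove injectivity, the plan is to realize $\phi$ topologically via mapping class groups of surfaces together with a branched cover, rather than via Garside theory as in \cite{crisp1997injective}. First, identify $A(A_{2n-1})$ with the mapping class group $\mathcal M(D, P)$ of a closed disk $D$ carrying $2n$ marked points $P = \{p_1, \dots, p_{2n}\}$ placed symmetrically about the center $o$ on a horizontal diameter, so that $t_i$ corresponds to the standard half-twist exchanging $p_i$ and $p_{i+1}$. Then introduce the orientation-preserving involution $\tau\colon D \to D$ given by rotation by $\pi$ about $o$; it fixes $o$, sends $p_i \mapsto p_{2n+1-i}$, and its conjugation action on $\mathcal M(D, P)$ recovers the involution $\sigma$ described preceding Proposition~\ref{gggg2}.

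Next, pass to the branched double cover $\pi\colon D \to \bar D := D/\tau$; the quotient $\bar D$ is topologically a disk carrying $n$ ordinary marked points $\bar p_i := \pi(\{p_i, p_{2n+1-i}\})$ and one order-$2$ cone point at $\bar o := \pi(o)$. The orbifold mapping class group $\mathcal M^{orb}(\bar D)$ is naturally isomorphic to $A(B_n)$ under the identification sending $s_1, \dots, s_{n-1}$ to half-twists between consecutive $\bar p_i$ and $s_n$ to the half-twist exchanging $\bar p_n$ with the cone point $\bar o$; the length-$4$ braid relation in the $B_n$ diagram is precisely the relation forced by the cone point having order $2$. A version of the Birman--Hilden theorem then provides an injective lifting map $\mathcal M^{orb}(\bar D) \hookrightarrow \mathcal M(D, P)$ obtained by lifting each mapping class to its unique $\tau$-equivariant representative, with image the $\sigma$-fixed subgroup of $\mathcal M(D, P)$. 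Tracing through generators: the half-twist $s_i$ ($i<n$) lifts to the commuting product $t_i t_{2n-i}$, while the cone-point half-twist $s_n$ lifts uniquely to the single central half-twist $t_n$. This composition $A(B_n) \cong \mathcal M^{orb}(\bar D) \hookrightarrow \mathcal M(D, P) = A(A_{2n-1})$ agrees with $\phi$ on generators, and injectivity follows.

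The main obstacle is invoking Birman--Hilden so that the lifting map has trivial kernel. The classical statement in this setting produces a short exact sequence in which the kernel is generated by $[\tau]$, and on a disk with boundary fixed pointwise this class is nontrivial (it is essentially a boundary half-twist). The plan is to set up the boundary conventions carefully — for instance by allowing rotation of $\partial D$, or equivalently by working in the braid-group version of the mapping class group where the appropriate boundary twist is declared trivial — so that $[\tau]$ itself becomes trivial and the Birman--Hilden map is a genuine injection. Once this technicality is in place, verifying that the resulting composition equals $\phi$ is a matter of unwinding the definitions on generators.
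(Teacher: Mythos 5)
Your plan is, in outline, the same as the paper's: identify $A(A_{2n-1})$ with $\mathrm{Mod}(\mathbb{D}_{2n})$, identify $A(B_n)$ with the mapping class group of the quotient of the disk by the rotation $\tau$ through $\pi$ (the paper phrases this via the mixed braid group $\mathscr{B}_{1,n}$ and a distinguished marked point rather than an orbifold cone point, but these give the same group), and deduce injectivity of $\phi$ from the Birman--Hilden property of the branched double cover. The generator computation also matches the paper's, with one caveat: $s_n$ does not descend to a ``half-twist exchanging $\bar p_n$ with the cone point'' --- no homeomorphism of the quotient can exchange an ordinary marked point with the branch point --- but rather to the full loop of $\bar p_n$ around $\bar o$, i.e.\ the generator $\alpha$ of $\mathscr{B}_{1,n}$, which is indeed covered by the single half-twist $t_n$.

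The step that would fail as written is your proposed resolution of the kernel issue. Declaring the boundary twist trivial (equivalently, letting $\partial D$ rotate) does not kill $[\tau]$: the induced permutation of the $2n$ marked points is $i \mapsto 2n+1-i$, which is nontrivial, so $[\tau]$ remains a nontrivial mapping class under any boundary convention; moreover, relaxing the boundary condition replaces $B_{2n} \cong A(A_{2n-1})$ by a proper quotient, so the diagram you need no longer has the right target. The correct move is the opposite one, and it is the one the paper makes: require all homeomorphisms and isotopies to fix $\partial D$ pointwise. Then $\tau$ is not a mapping class at all (it moves boundary points), the deck group contributes nothing to the kernel, and a lifted isotopy witnessing a relation must terminate at the identity rather than at the involution --- this is precisely the content of the footnote in the paper's proof of Proposition \ref{prop:MCG homomorphism}. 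With that correction your argument closes and coincides with the paper's.
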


We break up this proof into the following parts:

    \begin{itemize}
        \item[1.] \begin{adjustwidth}{0pt}{-6pt} Define convenient homeomorphisms representing the generators of Mod$(\mathbb{D}_{1,n})$ and Mod$(\mathbb{D}_{1,2n})$, \end{adjustwidth}
        \item[2.]  Prove that $\Phi: \text{Mod}(\mathbb{D}_{1,n}) \to \text{Mod}(\mathbb{D}_{2n})$ is a homomorphism,
        \item[3.] Prove that $\Phi: \text{Mod}(\mathbb{D}_{1,n}) \to \text{Mod}(\mathbb{D}_{2n})$ is injective, 
        \item[4.] Define the \textit{mixed braid group} and recall some standard facts about the relationship between Artin groups, braid groups, and mapping class groups,
        \item[5.] Define a series of maps that connect $\phi: A(B_n)\to A(A_{2n-1})$ to\\ 
            $\Phi: \text{Mod}(\mathbb{D}_{1,n}) \to \text{Mod}(\mathbb{D}_{2n})$, and 
        \item[6.] Complete the proof that $\phi: A(B_n)\to A(A_{2n-1})$ is injective. 
    \end{itemize}

\vspace*{5mm}
\noindent \underline{Part 1}: \\

We begin by defining mapping classes that will be used to prove that $\Phi: \text{Mod}(\mathbb{D}_{1,n}) \to \text{Mod}(\mathbb{D}_{2n})$ is a homomorphism.  Define $\mathbb{D}_{2n,1}$ to be the closed disk with $2n$ punctures and one marked point in the middle as shown in the figure for part 1.a.  Define $\mathbb{D}_{1,n}$ to be the closed disk with $n$ punctures and one marked point at the end as shown in the figure for part 2.a.  We will define specific mapping classes in $\mathbb{D}_{1,2n}$ and $\mathbb{D}_{1,n}$ and a quotient map that takes the defined mapping classes in $\mathbb{D}_{1,2n}$ to those in $\mathbb{D}_{1,n}$.

    \begin{itemize}
        \item[1.]  For $1\leq i\leq n-1$ \\

            \begin{itemize}
                \item[a.] Define $\tilde f_i: \mathbb{D}_{1,2n} \to \mathbb{D}_{1,2n}$ to 
                    be the two half Dehn twists shown below where $\times$ denotes the marked point and the punctures are numbered.

\vspace*{5mm}
{
\begin{adjustwidth*}{}{-25mm}
\scalebox{0.5}
{
\begin{tikzpicture}[ele/.style={fill=black,circle,minimum width=0.8pt,inner sep=1pt},every fit/.style={ellipse,draw,inner sep=-2pt}]
    \node[ele,label=below:$1$] (a1) at (-8,5) {}; 
    \node[ele,label=below:$2$] (a2) at (-7,5) {}; 
    \node at (-6.3,5) {$\cdots$};
    \node[ele,label=below:$i$] (ai) at (-5,5) {}; 
    \node[ele,label=below:$i+1$] (ai+1) at (-4,5) {}; 
    \node at (-2.7,5) {$\cdots$};
    \node[ele,label=below:$n-1$] (an-1) at (-2,5) {}; 
    \node[ele,label=below:$n$] (an) at (-1,5) {}; 

  \node[] (x1) at (0,5) {$\times$};    
  \node[ele,label=below:$n\!+\!1$] (an+1) at (1,5) {};    
  \node[ele,label=below:$n\!+\!2$] (an+2) at (2,5) {};
  \node (cdots1) at (2.8,5) {$\cdots$};
  \node[ele,label=below:$2n-i$] (a2n-i) at (4.5,5) {};
  \node[ele,label=below:$2n-i+1$] (a2n-i+1) at (6.5,5) {};
  \node (cdots2) at (8.2,5) {$\cdots$};
  \node[ele,label=below:$2n-1$] (a2n-1) at (9.5,5) {};
  \node[ele,label=below:$2n$] (a2n) at (10.5,5) {};

    \node[draw, fit={($(ai.west)+(-10pt,0)$) ($(ai+1.east)+(10pt,0)$)}, minimum height=2cm]     (a) {};
    \draw[color=magenta, -, thick] (-5.68,5) to (ai);     
    \draw[color=cyan, -, thick] (-3.32,5) to (ai+1);     

    \node[draw, fit={($(a2n-i.west)+(-15pt,0)$) ($(a2n-i+1.east)+(15pt,0)$)}, minimum height=2cm]     (a) {};
    \draw[color=magenta, -, thick] (3.36,5) to (a2n-i);     
    \draw[color=cyan, -, thick] (7.64,5) to (a2n-i+1);     

  \node[draw,fit= {($(a2.west)+(20pt,0)$) ($(a2n-1.east)+(-20pt,0)$)},minimum height=4cm]  
        (A) {} ;

    \node[ele,label=below:$1$] (b1) at (-8,-2) {}; 
    \node[ele,label=below:$2$] (b2) at (-7,-2) {}; 
    \node at (-6.3,-2) {$\cdots$};
    \node[ele,label=below:$i+1$] (bi+1) at (-5,-2) {}; 
    \node[ele,label=below:$i$] (bi) at (-4,-2) {}; 
    \node at (-2.7,-2) {$\cdots$};
    \node[ele,label=below:$n-1$] (bn-1) at (-2,-2) {}; 
    \node[ele,label=below:$n$] (bn) at (-1,-2) {}; 

  \node[] (x2) at (0,-2) {$\times$};    
  \node[ele,label=below:$n\!+\!1$] (bn+1) at (1,-2) {};    
  \node[ele,label=below:$n\!+\!2$] (bn+2) at (2,-2) {};
  \node (cdots1) at (2.8,-2) {$\cdots$};
  \node[ele,label=below:$2n-i+1$] (b2n-i+1) at (4.5,-2) {};
  \node[ele,label=below:$2n-i$] (b2n-i) at (6.5,-2) {};
  \node (cdots2) at (8.2,-2) {$\cdots$};
  \node[ele,label=below:$2n-1$] (b2n-1) at (9.5,-2) {};
  \node[ele,label=below:$2n$] (b2n) at (10.5,-2) {};

    \node[draw, fit={($(bi+1.west)+(-10pt,0)$) ($(bi.east)+(10pt,0)$)}, minimum height=2cm]     (b) {};
    \draw[color=magenta, -, thick] (-5.68,-2) to[bend left] (bi);     
    \draw[color=cyan, -, thick] (-3.32,-2) to[bend left] (bi+1);     

    \node[draw, fit={($(b2n-i+1.west)+(-15pt,0)$) ($(b2n-i.east)+(15pt,0)$)}, minimum height=2cm]     (b) {};
    \draw[color=magenta, -, thick] (3.36,-2) to[bend left] (b2n-i);     
    \draw[color=cyan, -, thick] (7.64,-2) to[bend left] (b2n-i+1);     
  \node[ele,label=below:$2n-i+1$] (b2n-i+1) at (4.5,-2) {};
  \node[ele,label=below:$2n-i$] (b2n-i) at (6.5,-2) {};

  \node[draw,fit= {($(b2.west)+(20pt,0)$) ($(b2n-1.east)+(-20pt,0)$)},minimum height=4cm]  
        (B) {} ;

    \draw [-latex] (A.-90) to[bend left] (B.90);
    \node (fi) at (2.25,1.5) {$\tilde f_i$};
  
 \end{tikzpicture}
 }
\end{adjustwidth*}
}

\vspace*{5mm}
                
        \item[b.] Define $f_i: \mathbb{D}_{1,n} \to \mathbb{D}_{1,n}$ to be the half Dehn 
            twist shown below where $\times$ denotes the marked point and the punctures are numbered. 

\vspace*{5mm}
 {       
\begin{center}
{
\scalebox{0.7}
{
\begin{tikzpicture}[ele/.style={fill=black,circle,minimum width=0.8pt,inner sep=1pt},every fit/.style={ellipse,draw,inner sep=-2pt}]
  \node[ele,label=below:$1$] (a1) at (0,5) {};    
  \node[ele,label=below:$2$] (a2) at (1,5) {};
  \node (cdots1) at (2,5) {$\cdots$};
  \node[ele,label=below:$i$] (ai) at (3,5) {};
  \node[ele,label=below:$i+1$] (ai+1) at (4,5) {};
  \node (cdots2) at (5,5) {$\cdots$};
  \node[ele,label=below:$n-1$] (an-1) at (6,5) {};
  \node[ele,label=below:$n$] (an) at (7,5) {};
  \node[] (x1) at (8,5) {$\times$};    

    \node[draw, fit={($(ai.west)+(-10pt,0)$) ($(ai+1.east)+(10pt,0)$)}, minimum height=2cm]     (a) {};
    \draw[color=magenta, -, thick] (2.32,5) to (ai);     
    \draw[color=cyan, -, thick] (4.68,5) to (ai+1);     
  \node[draw,fit= {($(a2.west)+(-10pt,0)$) ($(an.east)+(10pt,0)$)},minimum height=3cm]  
        (A) {} ;

  \node[ele,label=below:$1$] (b1) at (0,0) {};    
  \node[ele,label=below:$2$] (b2) at (1,0) {};
  \node (cdots3) at (2,0) {$\cdots$};
  \node[ele,label=below:$i+1$] (bi+1) at (3,0) {};
  \node[ele,label=below:$i$] (bi) at (4,0) {};
  \node (cdots4) at (5,0) {$\cdots$};
  \node[ele,label=below:$n-1$] (bn-1) at (6,0) {};
  \node[ele,label=below:$n$] (bn) at (7,0) {};
  \node[] (x2) at (8,0) {$\times$};    

    \node[draw, fit={($(bi+1.west)+(-10pt,0)$) ($(bi.east)+(10pt,0)$)}, minimum height=2cm]     (b) {};
    \draw[color=magenta, -, thick] (2.32,0) to[bend left] (bi);     
    \draw[color=cyan, -, thick] (4.68,0) to[bend left] (bi+1);     
    
  \node[draw,fit= {($(b2.west)+(-10pt,0)$) ($(bn.east)+(10pt,0)$)},minimum height=3cm] 
        (B) {} ;

    \draw [-latex] (A.-90) to[bend left] (B.90);
    \node (fi) at (4.75,2.5) {$f_i$};
  
 \end{tikzpicture}
 }
}
\end{center}}
\vspace*{5mm}

        \item[c.] Next we justify our notation by defining a double branched cover 
            $q_\times: \mathbb{D}_{1,2n} \to \mathbb{D}_{1,n}$.  Consider $\mathbb{D}_{n+1}$ and $\mathbb{D}_{2n+1}$ as $\mathbb{D}_{1,n}$ and $\mathbb{D}_{1,2n}$ punctured at their marked points, respectively. Define $q: \mathbb{D}_{2n+1} \to \mathbb{D}_{n+1}$ as the double cover obtained by quotienting $\mathbb{D}_{2n+1}$ by a 180-degree rotation about the central puncture---formerly the marked point.  Now we define $q_\times$ as follows:  $q_\times=q$ on $\mathbb{D}_{2n+1}\subseteq \mathbb{D}_{1,2n}$; and $q_\times$ takes the marked point of $\mathbb{D}_{1,2n}$ to the marked point of $\mathbb{D}_{1,n}$.  Thus, we see that $f_iq_\times=q_\times\tilde f_{i}$, which makes $\tilde f_{i}$ a lift of $f_i$.\\

    \end{itemize}

        \item[2.] For $i=n$ in $\mathbb{D}_{1,2n}$.
            \begin{itemize}
                \item[a.] Define $\tilde f_n: \mathbb{D}_{1,2n} \to \mathbb{D}_{1,2n}$ to be the (full) Dehn twist about the annulus shown below. Observe that $\tilde f_n$ is a representative of the mapping class corresponding to the half Dehn twist swapping punctures $n$ and $n+1$.

\vspace*{5mm}
{
\begin{center}
\scalebox{0.6}
{
\begin{tikzpicture}[ele/.style={fill=black,circle,minimum width=0.8pt,inner sep=1pt},every fit/.style={ellipse,draw,inner sep=-2pt}]
    \node[ele,label=below:$1$] (a1) at (-6,5) {}; 
    \node[ele,label=below:$2$] (a2) at (-5,5) {}; 
    \node at (-4,5) {$\cdots$};
    \node[ele,label=below:$n-1$] (an-1) at (-3,5) {}; 
    \node[ele,label=below:$n$] (an) at (-1.5,5) {}; 

  \node[] (x1) at (0,5) {$\times$};    
  \node[ele,label=below:$n\!+\!1$] (an+1) at (1.5,5) {};    
  \node[ele,label=below:$n\!+\!2$] (an+2) at (3,5) {};
  \node (cdots1) at (4,5) {$\cdots$};
  \node[ele,label=below:$2n-1$] (a2n-1) at (5,5) {};
  \node[ele,label=below:$2n$] (a2n) at (6,5) {};

    \node[fill=gray!8, draw, fit={($(an.west)+(-9pt,0)$) ($(an+1.east)+(9pt,0)$)}, 
        minimum height=2.3cm] (a) {};
    \draw[fill=white] (0,5) ellipse (0.8 and 0.3);

    \draw[color=magenta, -, thick] (-2.55,5) to (-0.8,5);     
    \draw[color=cyan, -, thick] (2.55,5) to (0.8,5);     
    \node[ele,label=below:$n$] (an) at (-1.5,5) {}; 

  \node[] (x1) at (0,5) {$\times$};    
  \node[ele,label=below:$n\!+\!1$] (an+1) at (1.5,5) {};    

  \node[draw,fit= {($(a1.west)+(20pt,0)$) ($(a2n.east)+(-20pt,0)$)},minimum height=4cm]  
        (A) {} ;

    \node[ele,label=below:$1$] (b1) at (-6,-2) {}; 
    \node[ele,label=below:$2$] (b2) at (-5,-2) {}; 
    \node at (-4,-2) {$\cdots$};
    \node[ele,label=below:$n-1$] (bn-1) at (-3,-2) {}; 
    \node[ele,label=below:$n$] (bn) at (-1.5,-2) {}; 

  \node[] (x2) at (0,-2) {$\times$};    
  \node[ele,label=below:$n\!+\!1$] (bn+1) at (1.5,-2) {};    
  \node[ele,label=below:$n\!+\!2$] (bn+2) at (3,-2) {};
  \node (cdots1) at (4,-2) {$\cdots$};
  \node[ele,label=below:$2n-1$] (b2n-1) at (5,-2) {};
  \node[ele,label=below:$2n$] (b2n) at (6,-2) {};

    \node[fill=gray!8, draw, fit={($(bn.west)+(-9pt,0)$) ($(bn+1.east)+(9pt,0)$)}, 
        minimum height=2.3cm] (b) {};

    \draw[fill=white] (0,-2) ellipse (0.8 and 0.3);
    \draw [color=magenta, -, thick] plot [smooth, tension=1.1] coordinates { (-2.55,-2) (-0.2,-1.1) (1.5,-2) (0,-2.6) (-0.82,-2)};
    \draw [color=cyan, -, thick] plot [smooth, tension=1.1] coordinates { (2.55,-2) (0.2,-2.9) (-1.5,-2) (0,-1.4) (0.82,-2)};    
    \node[ele,label=below:$n$] (bn) at (-1.5,-2) {}; 

  \node[] (x2) at (0,-2) {$\times$};    
  \node[ele,label=below:$n\!+\!1$] (bn+1) at (1.5,-2) {};    

  \node[draw,fit= {($(b1.west)+(20pt,0)$) ($(b2n.east)+(-20pt,0)$)},minimum height=4cm]  
        (B) {} ;

    \draw [-latex] (A.-90) to[bend left] (B.90);
    \node (fi) at (1,1.5) {$\tilde f_n$};
  
 \end{tikzpicture}
}

 \end{center}
}

\vspace*{5mm}

        \item[b.] Note that map $\tilde f_n$ commutes with the 180-degree rotation used in 
            defining $q_x$.  Therefore there exists $f_n:\mathbb{D}_{1,n} \to \mathbb{D}_{1,n}$ such that $f_nq_x=q_x\tilde f_n$.  Therefore $\tilde f_n$ is a lift of $f_n$. \\
         \end{itemize}

        \item[3.] Define $\Phi$ on generators such that $\Phi([f_i])= \text{\textit{forget}}([\tilde f_{i}])$, where $[g]$ is the mapping class of $g$ and the \textit{forget} map forgets the marked point in Mod$(\mathbb{D}_{1,2n}$).  
    \end{itemize}

\vspace*{5mm}
\noindent \underline{Part 2}: \\

We are now ready to prove that map $\Phi$ is a homomorphism.

\begin{prop} \label{prop:MCG homomorphism}
    Map $\Phi: \text{Mod}(\mathbb{D}_{n,1})\to \text{Mod}(\mathbb{D}_{2n})$ is a homomorphism.  
\end{prop}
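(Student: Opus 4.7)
The plan is to verify that $\Phi$, as defined on the generating set $\{[f_1], \ldots, [f_n]\}$ of $\mathrm{Mod}(\mathbb{D}_{1,n})$, extends to a well-defined homomorphism by checking that the defining relations are respected. Since $\mathrm{Mod}(\mathbb{D}_{1,n})$ is isomorphic to the Artin group $A(B_n)$ via $f_i \leftrightarrow s_i$, it suffices to verify three families of relations for the images $g_i := \Phi([f_i])$ in $\mathrm{Mod}(\mathbb{D}_{2n})$: (i) commutation $g_i g_j = g_j g_i$ for $|i-j| \geq 2$; (ii) braid relations $g_i g_{i+1} g_i = g_{i+1} g_i g_{i+1}$ for $1 \leq i \leq n-2$; and (iii) the $B_n$ relation $g_{n-1} g_n g_{n-1} g_n = g_n g_{n-1} g_n g_{n-1}$.

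The first step is to identify the images concretely. From the constructions of $\tilde f_i$ in Part 1 and the behavior of the forget map on Dehn twists around curves that enclose only the marked point (together with bounded punctures), one has $g_i = \sigma_i \sigma_{2n-i}$ for $1 \leq i \leq n-1$ (the product of the two commuting half-twists making up $\tilde f_i$), while $g_n = \sigma_n$ is a single half-twist, because forgetting the marked point identifies the Dehn twist about the annulus encircling it with the standard half-twist of the two bounded punctures. Here $\sigma_k$ denotes the standard half-twist generator of $\mathrm{Mod}(\mathbb{D}_{2n}) \cong B_{2n}$ swapping punctures $k$ and $k+1$.

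Relations (i) and (ii) then follow quickly from disjoint-support arguments together with the standard braid relations in $B_{2n}$. For (i), when $|i-j| \geq 2$ the neighborhoods supporting $g_i$ and $g_j$ consist of pairwise disjoint puncture pairs, so the classes commute. For (ii), expanding $g_i g_{i+1} g_i$ and $g_{i+1} g_i g_{i+1}$ as words in the $\sigma_k$, the pairs $(\sigma_i,\sigma_{i+1})$ and $(\sigma_{2n-i-1},\sigma_{2n-i})$ each satisfy the ordinary braid relation, while the cross pairs commute by disjointness, and these combine to give the desired equality.

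The main obstacle lies in (iii). After substitution, the relation becomes the identity
\[
\sigma_{n-1}\sigma_{n+1}\sigma_n\sigma_{n-1}\sigma_{n+1}\sigma_n \;=\; \sigma_n\sigma_{n-1}\sigma_{n+1}\sigma_n\sigma_{n-1}\sigma_{n+1}
\]
in $B_{2n}$. This is established by an explicit chain of braid moves, alternating the adjacent-generator braid relations $\sigma_{n-1}\sigma_n\sigma_{n-1} = \sigma_n\sigma_{n-1}\sigma_n$ and $\sigma_n\sigma_{n+1}\sigma_n = \sigma_{n+1}\sigma_n\sigma_{n+1}$ with the commutation $\sigma_{n-1}\sigma_{n+1} = \sigma_{n+1}\sigma_{n-1}$. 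This identity is precisely the algebraic shadow of the folding $B_n \to A_{2n-1}$ underlying $\phi$, and is the technical heart of the proposition. Once (i)--(iii) are verified, the universal property of the Artin presentation of $\mathrm{Mod}(\mathbb{D}_{1,n})$ as $A(B_n)$ yields $\Phi$ as a well-defined homomorphism.
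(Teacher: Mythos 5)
Your proof is correct, but it takes a genuinely different route from the paper. You verify well-definedness algebraically: taking as input the standard presentation $\mathrm{Mod}(\mathbb{D}_{1,n}) \cong A(B_n)$ on the generators $[f_i]$, you identify the images $\Phi([f_i]) = \sigma_i\sigma_{2n-i}$ for $i \le n-1$ and $\Phi([f_n]) = \sigma_n$ in $\mathrm{Mod}(\mathbb{D}_{2n}) \cong \mathscr{B}_{2n}$, and check the commutation, braid, and $m=4$ relations by hand; the only nontrivial computation is $(\sigma_{n-1}\sigma_{n+1}\sigma_n)^2 = (\sigma_n\sigma_{n-1}\sigma_{n+1})^2$, which does follow from an alternating application of the braid and commutation relations (this is the usual algebraic folding of $A_{2n-1}$ onto $B_n$, and is essentially the content of the map $\varphi$ the paper defines in Part 5). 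The paper instead argues topologically and presentation-free: given any word in the $[f_i]$ that is trivial in $\mathrm{Mod}(\mathbb{D}_{1,n})$, the trivializing isotopy is lifted through the double branched cover $q_\times$ via the homotopy lifting property, yielding an isotopy of the corresponding product of the $\tilde f_i$ to the identity upstairs, hence triviality after the forget map. The paper's argument needs no presentation and handles all relations uniformly (at the cost of the small check that the lifted isotopy terminates at the identity rather than at the deck involution), whereas yours requires the presentation of $\mathrm{Mod}(\mathbb{D}_{1,n})$ as $A(B_n)$ as an external input (a standard fact the paper itself invokes in Part 4), but in exchange is completely explicit and in effect also establishes the commutativity $I_2\varphi = \Phi\iota_2$ needed later in Part 5.
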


\begin{proof}
It suffices to show that if a product of generators $\prod[f_{\sigma(i)}]$ is the identity in Mod$(\mathbb{D}_{1,n})$, then $\Phi(\prod[f_{\sigma(i)}])$ is the identity in Mod$(\mathbb{D}_{2n})$.  
Suppose $\prod [f_{\sigma(i)}]$ is the identity in Mod$(\mathbb{D}_{1,n})$.  In particular, this means that there is an isotopy $h_t$ from $\prod f_{\sigma(i)}$ to the identity that fixes the marked point in $\mathbb{D}_{1,n}$.    

Let $h_t$ be a homotopy at time $t$ from $\mathbb{D}_{n+1}$ to $\mathbb{D}_{n+1}$.  Then the homotopy lifting property applies to $h_t\circ q: \mathbb{D}_{2n+1}\to \mathbb{D}_{n+1}$ and fixes the marked puncture in $\mathbb{D}_{n+1}$, which gives us a unique homotopy $\tilde h_t: \mathbb{D}_{2n+1}\to \mathbb{D}_{2n+1}$ that fixes the marked puncture in $\mathbb{D}_{2n+1}$.  Therefore, we can restrict $\tilde h_t$ to go from $\mathbb{D}_{1,2n}$ to $\mathbb{D}_{1,2n}$.  

Because $\prod [f_{\sigma(i)}]$ is the identity in Mod$(\mathbb{D}_{1,n})$, we have that $\tilde h_t$ is a homotopy from $\prod \tilde f_{\sigma(i)}$ to the identity in Mod$(\mathbb{D}_{1,2n})$\footnote{Note that $\tilde h_t$ cannot be a homotopy from $\prod \tilde f_{\sigma(i)}$ to the involution in Mod$(\mathbb{D}_{1,2n})$ because the boundary points must be fixed.}.  This means that $[\prod \tilde f_{\sigma(i)}]$ and \textit{forget}$\big([\prod \tilde f_{\sigma(i)}]\big)$ both equal the identity in Mod$(\mathbb{D}_{2n})$.  Therefore, the homomorphism $\Phi$ is well defined.
\end{proof}

\vspace*{5mm}
\noindent \underline{Part 3}: \\

Now that we know $\Phi$ is a homomorphism, we can prove that it is also injective.  But first, we need to define some terminology that is needed for the proof.

\begin{defn} 
    \cite{margalit2021braid}
    Let $q: S\to X$ be a covering map of surfaces.  Map $\tilde f: S\to S$ is \textit{fiber preserving} if there exists a map $f:X\to X$ such that the below diagram commutes:

    \begin{figure}[H]
        \centering
        \begin{tikzcd}[row sep=2em, column sep=1em]
        S \arrow[swap]{d}{q} \arrow{rr} {\tilde f} & & 
                S \arrow{d} {q} \\
        X \arrow{rr} {f} & & X 
    \end{tikzcd}
        \label{fig:fiber-preserving def}
    \end{figure}
\end{defn}

\begin{defn} \cite{margalit2021braid} Covering map $q$ has the \textit{Birman-Hilden property} if every fiber-preserving homeomorphism that is homotopic to the identity is also homotopic to the identity through fiber-preserving homeomorphisms. 

\end{defn}

\begin{thm} \label{thm:BHMH}
    (Birman-Hilden-Maclachlan–Harvey as cited in \cite{margalit2021braid})
    Let $q : S \to X$ be a finite-sheeted regular branched covering map where $S$ is a hyperbolic surface. Then $q$ has the Birman–Hilden property.
\end{thm}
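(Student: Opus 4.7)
The plan is to construct a canonical fiber-preserving isotopy by exploiting the uniqueness of hyperbolic geodesics. First I would equip $S$ with a hyperbolic metric on which the deck group $G = \mathrm{Deck}(q)$ acts by isometries. This is possible since $S$ is hyperbolic and $G$ is finite: either by invoking Kerckhoff's solution of the Nielsen realization problem applied to $G$, or by starting with a hyperbolic orbifold structure on $X$ and pulling it back through $q$. With this metric fixed, $X$ becomes a hyperbolic $2$-orbifold and $q$ is its finite orbifold cover, so we have a short exact sequence $1 \to \pi_1(S) \to \pi_1^{\mathrm{orb}}(X) \to G \to 1$ of groups acting on $\mathbb{H}^2$.

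Given a fiber-preserving $\tilde f\colon S \to S$ isotopic to the identity, I would lift to the universal cover to obtain $F \colon \mathbb{H}^2 \to \mathbb{H}^2$. Because $\tilde f \simeq \mathrm{id}$ in $\mathrm{Homeo}(S)$, the lift $F$ can be chosen to commute with the $\pi_1(S)$-action and to extend to the identity on $\partial \mathbb{H}^2$. Because $\tilde f$ is fiber-preserving, it descends to a homeomorphism $f$ of $X$, and $F$ is simultaneously a lift of $f$ through $\mathbb{H}^2 \to X$; hence $F$ normalizes the action of $\pi_1^{\mathrm{orb}}(X)$ via some automorphism $\sigma$. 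Elements of $\pi_1^{\mathrm{orb}}(X)$ are determined by their boundary action on $\partial \mathbb{H}^2$, and since $F|_{\partial \mathbb{H}^2} = \mathrm{id}$, the automorphism $\sigma$ must be inner; after adjusting $F$ by a covering translation we may take $\sigma = \mathrm{id}$, so $F$ is fully $\pi_1^{\mathrm{orb}}(X)$-equivariant.

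Having achieved this equivariance, I would define the straight-line isotopy $F_t \colon \mathbb{H}^2 \to \mathbb{H}^2$ by sending each $x$ along the unique hyperbolic geodesic from $x$ to $F(x)$, parameterized so that $F_0 = \mathrm{id}$ and $F_1 = F$. Because geodesics in $\mathbb{H}^2$ are unique and depend continuously on their endpoints, $F_t$ inherits every symmetry of the pair $(x, F(x))$; in particular, $F_t$ is $\pi_1^{\mathrm{orb}}(X)$-equivariant for every $t$. It therefore descends to a continuous isotopy $\tilde f_t$ from $\mathrm{id}$ to $\tilde f$ on $S$, and each $\tilde f_t$ is simultaneously a lift of a homeomorphism $f_t$ of $X$, making the isotopy fiber-preserving throughout.

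The step I expect to be the main obstacle is controlling $F$ near the branch locus and verifying that $\sigma$ is trivial rather than just inner. At a preimage $p$ of a cone point of $X$, the stabilizer $H \leq \pi_1^{\mathrm{orb}}(X)$ is nontrivial cyclic; equivariance forces $F(p) = p$, so the constant geodesic at $p$ is automatically $H$-invariant and the straight-line homotopy fixes $p$, but this has to be checked in a neighborhood to confirm the descended isotopy is continuous across the branch set. The subtler issue is the boundary-action argument for triviality of $\sigma$: one must carefully exploit that the isotopy $\tilde f \simeq \mathrm{id}$ in $\mathrm{Homeo}(S)$ (not merely in the fiber-preserving subgroup) nevertheless constrains $F$ sharply enough on $\partial \mathbb{H}^2$ to kill any outer twist coming from $G$, and this is precisely where the hyperbolicity of $S$ is used in an essential way.
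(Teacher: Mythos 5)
This theorem is not proved in the paper: it is quoted as a black box from the Margalit--Winarski survey, where it is attributed to Birman--Hilden and MacLachlan--Harvey. So there is no in-paper argument to compare against; what you have written is a sketch of the standard hyperbolic-geometry proof from that literature, and most of it is sound. Pulling back an orbifold metric from $X$ (or invoking Nielsen realization) to make the deck group act by isometries is correct; choosing the lift $F$ of $\tilde f$ that commutes with $\pi_1(S)$ and extends to the identity on $\partial\mathbb{H}^2$ is correct for the compact case; and the conclusion that the induced automorphism $\sigma$ of $\pi_1^{\mathrm{orb}}(X)$ is \emph{trivial} (not merely inner) follows immediately, since $F\gamma F^{-1}$ and $\gamma$ are deck transformations with the same boundary action and a Fuchsian element is determined by its action on $\partial\mathbb{H}^2$. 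You identify $\sigma$ and the branch locus as the main obstacles, but neither is where the real difficulty lies.

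The genuine gap is in the last step: the geodesic ``straight-line'' interpolation $F_t$ is equivariant and continuous, but there is no reason for the intermediate maps $F_t$ to be injective, hence no reason for them to be homeomorphisms. (Already in $\mathbb{H}^2$, geodesically interpolating from the identity to a homeomorphism can collapse an open set at intermediate times; injectivity of $F_0$ and $F_1$ does not propagate.) The Birman--Hilden property as defined in the paper requires a path \emph{through fiber-preserving homeomorphisms}, i.e.\ an equivariant isotopy, whereas your construction only delivers an equivariant homotopy. Closing this gap is precisely the content of the classical proofs: one must either upgrade the equivariant homotopy to an equivariant isotopy (an equivariant form of ``homotopy implies isotopy'' for surfaces, e.g.\ via Earle--Eells/contractibility of $\mathrm{Homeo}_0$ and averaging, or the Teichm\"uller-theoretic argument of MacLachlan--Harvey), or argue combinatorially as Birman--Hilden originally did. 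A second, smaller issue: the surfaces actually used in the paper ($\mathbb{D}_{1,n}$, $\mathbb{D}_{1,2n}$) have boundary and punctures, so the statement needed is the version rel boundary; your boundary-extension step for $F$ and the behavior of the interpolation near $\partial S$ and the punctures would need to be checked in that setting rather than for closed surfaces.
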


\begin{thm} 
    $\Phi: \text{Mod}(\mathbb{D}_{1,n})\to \text{Mod}(\mathbb{D}_{2n})$ is injective.  
\end{thm}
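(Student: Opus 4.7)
The plan is to apply Theorem~\ref{thm:BHMH} to the double branched cover $q : \mathbb{D}_{2n} \to \mathbb{D}_n$ obtained from the cover $q : \mathbb{D}_{2n+1} \to \mathbb{D}_{n+1}$ of Part~1(c) by filling in the central puncture $p_0$, so that $p_0$ becomes an unmarked interior point which nonetheless serves as the unique branch point. Suppose $[f] \in \text{Mod}(\mathbb{D}_{1,n})$ lies in $\ker \Phi$, and choose any representative $\tilde f$ of the lifted class (for instance, the product of the specific lifts $\tilde f_{\sigma(i)}$ associated to a word decomposition of $f$). By definition $\Phi([f]) = \textit{forget}([\tilde f])$, so $\tilde f$ is isotopic to the identity when viewed as an element of $\text{Mod}(\mathbb{D}_{2n})$.

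Next, recall from Part~1 that each $\tilde f_{\sigma(i)}$ commutes with the $180^\circ$ deck involution $\rho$; the composition $\tilde f$ therefore commutes with $\rho$ as well, and this commutation persists when the fixed point $p_0$ is filled in. Thus $\tilde f$ is fiber-preserving with respect to $q : \mathbb{D}_{2n} \to \mathbb{D}_n$. Since $\mathbb{D}_{2n}$ is hyperbolic for $n \geq 1$ (its Euler characteristic equals $1 - 2n < 0$), Theorem~\ref{thm:BHMH} supplies an isotopy $\{H_t\}_{t \in [0,1]}$ from $H_0 = \tilde f$ to $H_1 = \text{id}$ that passes entirely through fiber-preserving homeomorphisms of $\mathbb{D}_{2n}$.

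The crucial observation is that every fiber-preserving self-homeomorphism of $\mathbb{D}_{2n}$ must fix the branch point $p_0$ pointwise: its induced map $h_t : \mathbb{D}_n \to \mathbb{D}_n$ must preserve the unique branch value $p_0^*$ (characterized among points of $\mathbb{D}_n$ as the only one whose preimage is a singleton), and since $q^{-1}(p_0^*) = \{p_0\}$, we conclude $H_t(p_0) = p_0$. Consequently $\{H_t\}$ is an isotopy through self-homeomorphisms of $\mathbb{D}_{1,2n}$ preserving the marked point, and pushing it down through $q$ yields a continuous family $\{h_t\}$ of homeomorphisms of $\mathbb{D}_n$ satisfying $h_t(p_0^*) = p_0^*$, $h_0 = f$, and $h_1 = \text{id}$. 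This is an isotopy in $\text{Mod}(\mathbb{D}_{1,n})$ from $f$ to the identity, so $[f] = \text{id}$ and $\Phi$ is injective.

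The principal subtlety is the promotion step in the third paragraph: Birman--Hilden furnishes an isotopy only in $\text{Mod}(\mathbb{D}_{2n})$, and the argument needs the asymmetry between the branch fiber (a singleton) and every other fiber (a pair) to force the isotopy to respect the marked point. The remaining ingredients --- hyperbolicity of $\mathbb{D}_{2n}$, persistence of fiber-preservation after filling in the central puncture, and the continuity of the pushed-down family $\{h_t\}$ --- are routine verifications.
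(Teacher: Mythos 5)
Your proof is correct and follows essentially the same route as the paper: lift to $\mathbb{D}_{2n}$, observe the lift is fiber-preserving and isotopic to the identity, invoke the Birman--Hilden property (Theorem~\ref{thm:BHMH}) to upgrade to an isotopy through fiber-preserving homeomorphisms, and push down. Your explicit verification that every fiber-preserving homeomorphism must fix the branch point (the unique singleton fiber), so that the induced isotopy downstairs genuinely respects the marked point of $\mathbb{D}_{1,n}$, is a welcome filling-in of a step the paper leaves implicit.
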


\begin{proof}
    Suppose $\Phi(\prod[f_{\sigma(i)}])$ is the homotopy class of the identity $id_{\mathbb{D}_{2n}}: \mathbb{D}_{2n}\to \mathbb{D}_{2n}$.  Let $\tilde f: \mathbb{D}_{2n}\to \mathbb{D}_{2n}$ be defined such that $$\tilde f(p)= 
        \begin{cases}
            (\prod \tilde f_{\sigma(i)})(p)  & p\in \mathbb{D}_{2n+1}\subseteq \mathbb{D}_{2n} \\
            p  & p=\mathbb{D}_{2n}\setminus \mathbb{D}_{2n+1}
        \end{cases}.$$
    By definition, $\Phi(\prod[f_{\sigma(i)}])=[\tilde f]\in \text{Mod}(\mathbb{D}_{2n})$.  Because $\Phi(\prod[f_{\sigma(i)}])=id_{\mathbb{D}_{2n}}$, then $\tilde f$ is isotopic to $id_{\mathbb{D}_{2n}}$.  Map $\tilde f$ is also fiber-preserving because it lifts $\prod f_{\sigma(i)}$.  

    Because $q_\times: \mathbb{D}_{1,2n}\to \mathbb{D}_{1,n}$ is a two-sheeted regular branched covering map and $\mathbb{D}_{1,2n}$ is a hyperbolic surface, then $q_\times$ has the Birman-Hilden property by Theorem \ref{thm:BHMH}.  This means that $\tilde f$ is homotopic to $id_{\mathbb{D}_{2n}}$ through a family of fiber-preserving homeomorphisms $\tilde h_{t}$ that induce a homotopy $h_t:X\to X$ from $\prod f_{\sigma(i)}$ to $id_{\mathbb{D}_{1,n}}$.  That is $[\prod f_{\sigma(i)}]=\prod[f_{\sigma(i)}]$ is the identity in Mod$(\mathbb{D}_{1,n})$ as required.
\end{proof}

\vspace*{5mm}
\noindent \underline{Part 4}: \\

We now define the \textit{mixed braid group} and state the relationship between it and the Artin group corresponding to $B_n$.  Then we recall some known isomorphisms between between Artin groups and braid groups and between  braid groups and mapping class groups. 

\begin{defn}
    
    In \cite{lambropoulou2000braid}, Lambropoulou defined the mixed braid group $\mathscr{B}_{g,n}$ to be the subgroup of $\mathscr{B}_{g+n}$ consisting of the set of all braids with $g$ fixed strands and $n$ moving strands. (See also \cite{cavicchioli2023mixed}.) Group $\mathscr{B}_{g,n}$ has two types of generators:  
    \begin{enumerate}
        \item[1.] $g$ ``loop generators" $\alpha_1, ..., \alpha_g$ where $\alpha_i$ is a looping of the $1^{\text{st}}$ moving strand (ms) around the i-th fixed strand (fs):

\hspace*{-30mm}
\scalebox{0.8}
{
\begin{tikzpicture}
\begin{knot}[
flip crossing=2,
flip crossing=3,
flip crossing=4,
flip crossing=5,
flip crossing=6,
]

    \strand[black, ultra thick] (-10,0) .. controls +(0,0) and +(0,0) .. (-10,2);
    \strand[black, ultra thick] (-7.2,0) .. controls +(0,0) and +(0,0) .. (-7.2,2);
    \strand[black, ultra thick] (-5.5,0) .. controls +(0,0) and +(0,0) .. (-5.5,2);
    \strand[black, ultra thick] (-3.5,0) .. controls +(0,0) and +(0,0) .. (-3.5,2);
    
    \strand[black, ultra thick] (-2,0) .. controls +(0,0.5) and +(0,0) .. (-3,0.5) ..
        controls +(-11.9,0.6) and +(0,0) .. (-3,1.5).. controls +(0,0) and +(0,-0.5) .. (-2,2);
    
    \strand[black, ultra thick] (0,0) .. controls +(0,0) and +(0,0) .. (0,2);
    
    \node[font=\huge] at (-1,1){$\cdots$};
    \node[font=\huge] at (-4.5,1){$\cdots$};
    \node[font=\huge] at (-9,1){$\cdots$};
            
    \node[anchor=south] at (0,2) 
        {$n^{\text{th}} \text{ ms}$ };
    \node[anchor=south] at (-2,2) 
        {$1^{\text{st}} \text{ ms}$};
    \node[anchor=south] at (-3.5,2) 
        {$g^{\text{th}} \text{ fs}$};
    \node[anchor=south] at (-5.5,2) 
        {$(i+1)^{\text{st}} \text{ fs}$};
    \node[anchor=south] at (-7.2,2) 
        {$i^{\text{th}} \text{ fs}$};
    \node[anchor=south] at (-10,2) 
        {$1^{\text{st}} \text{ fs}$};

\end{knot}

\end{tikzpicture}
}
\hfill\\

        Thus we obtain the following relations between $\alpha_i$ and the standard braid relations $\theta_1$, ..., $\theta_g$ where $\theta_i$ is the generator between the $i^{\text{th}}$ fixed strand and the $(i+1)^{\text{st}}$ fixed strand:  
        \begin{itemize}
            \item[] $\alpha_g=\theta_g^2$
            \item[] $\alpha_{g-1}=\theta_g\theta_{g-1}^2\theta_g^{-1}$
            \item[] $\alpha_i = \theta_g\theta_{g-1}...\theta_{i+1}\theta_{i}^2
                    \theta_{i+1}^{-1}...\theta_{g-1}^{-1}\theta_g^{-1}$ for $1\leq i\leq g$.
        \end{itemize}
        
        \item[2.] $n-1$ elementary crossings $\sigma_{1}, \ ... \, , \ \sigma_{n-1}$ 
            (i.e., moving strands $j$ and $j+1$ interchange positions)

  \end{enumerate}

The relations for $\mathscr{B}_{g,n}$ are as follows where the first two are the standard braid relations:
\begin{enumerate}
    \item[1.] $\sigma_i\sigma_j=\sigma_j\sigma_i$, $|i-j|>1$, $1\leq i,j \leq n-1$
    \item[2.] $\sigma_i\sigma_{i+1}\sigma_i=\sigma_{i+1}\sigma_i\sigma_{i+1}$, $1\leq i\leq n-2$
    
    \item[3.] $\alpha_i\sigma_j=\sigma_j\alpha_i$, $1\leq i\leq g$, $2\leq j\leq n-1$
    \item[4.] $\alpha_i\sigma_{1}\alpha_i\sigma_{1}=\sigma_{1}\alpha_i\sigma_{1}\alpha_i$, 
            $1\leq i\leq g$
    \item[5.] $\alpha_i(\sigma_{1}\alpha_r\sigma_{1}^{-1})=
            (\sigma_{1}\alpha_r\sigma_{1}^{-1})\alpha_i$, $1\leq r<i\leq g$ 
\end{enumerate}
\end{defn}

In particular, the mixed braid group $\mathscr{B}_{1,n}$ is isomorphic to the Artin group corresponding to $B_n$.  In $\mathscr{B}_{1,n}$, we will call $\alpha_1$ just $\alpha$ because there is only one fixed strand.  Moreover, it is a classic result that $\mathscr{B}_{2n}$ is isomorphic to the Artin group $A(B_{2n-1})$ such that the generators $\sigma_i$ of $\mathscr{B}_{2n}$ get mapped to the generators $t_i$ of $A(B_{2n-1})$.  

Furthermore, we know that the standard braid group $\mathscr{B}_n$ is isomorphic to the mapping class group of the closed disk with $n$ punctures where each generator $\sigma_i$ of $\mathscr{B}_{n}$ is mapped to the homotopy class of a homeomorphism of $\mathbb{D}_{n}$ that corresponds to a half Dehn twist interchanging the $i^{th}$ and $(i+1)^{st}$ punctures (see, for example, Chapter 9 in \cite{farb2011primer}).  
Thus, $\mathscr{B}_{2n}$ is isomorphic to Mod$(\mathbb{D}_{2n})$.  We can deduce that the mixed braid group $\mathscr{B}_{1,n}$ is also isomorphic to the mapping class group of the closed disk with one marked point and $n$ punctures, $\mathbb{D}_{1,n}$.

\vspace*{5mm}
\noindent \underline{Part 5}: \\

We now define the following maps in order to connect $\phi: A(B_n)\to A(A_{2n-1})$ to $\Phi: \text{Mod}(\mathbb{D}_{1,n}) \to \text{Mod}(\mathbb{D}_{2n})$ and show that the two squares in the below diagram commute.

    \begin{figure}[!ht]
        \centering
        \begin{tikzcd}[row sep=2em, column sep=1em]
        A(B_n) \arrow[->]{d}{\cong} [swap] {\iota_1} \arrow{rr} {\phi} & & 
                A(A_{2n-1}) \arrow[->]{d} {I_1} [swap]{\cong} \\
        \mathscr{B}_{1,n} \arrow[->]{d}{\cong} [swap]{\iota_2} \arrow{rr} {\varphi} & & 
                \mathscr{B}_{2n} \arrow[->]{d} {I_2} [swap] {\cong}\\
        \text{Mod}(\mathbb{D}_{1,n}) \arrow{dr} \arrow{rr} {\Phi} & & 
                \text{Mod}(\mathbb{D}_{2n}) \\
        & \text{Mod}(\mathbb{D}_{1,2n}) \arrow[swap]{ur} {\text{\textit{forget}}}
    \end{tikzcd}
        \label{fig:blueprint}
        \caption{Blueprint of maps}
    \end{figure}

Definition of Maps:
    \begin{itemize}
        \item[1.] Map $\iota_1$ is the standard isomorphism between $A(B_n)$ and 
            $\mathscr{B}_{1,n}$ after reversing the order of the strands in $\mathscr{B}_{1,n}$.
        \item[2.] Map $I_1$ is the standard isomorphism between $A(A_{2n-1})$ and 
            $\mathscr{B}_{2n}$.  
        \item[3.] Define $\iota_2$ on generators such that $\iota_2(\alpha)=f_n$ and for 
            $i<n$, $\iota_2(\sigma_{n-i})=f_i$. Observe that this recovers the usual identification of braids as mapping classes.  
        
        \item[4.] Define $I_2$ on generators corresponding to a half-twist of neighboring 
            strands in the braid group to a half Dehn twist of the corresponding neighboring punctures.  Observe that $I_2(\sigma'_i\sigma'_{2n-i})=\tilde f_i$ and $I_2(\sigma'_n)=\tilde f_n$. 
        \item[5.] Define $\varphi$ on generators such that $\varphi(\alpha)=\sigma'_n$ and 
            for $i<n$, 
            $\varphi(\sigma_{n-i}) = \sigma'_{n-(n-i)}\sigma'_{n+(n-i)}= \sigma'_{i}\sigma'_{2n-i}$.  \\
    \end{itemize}

With these maps, we see that $I_1\phi = \varphi\iota_1$ and $I_2\varphi = \Phi\iota_2$.

\vspace*{5mm}
\noindent \underline{Part 6}: \\

Proof of Proposition \ref{gggg2} that $\phi: A(B_n)\to A(A_{2n-1})$ is an injective homomorphism:  Because each square diagram in Figure \ref{fig:blueprint} commutes, we have that $I_2I_1\phi = \Phi\iota_2\iota_1$.  Because $\Phi$ is an injective homomorphism and $\iota_j$ and $I_j$ are isomorphisms for $j=1,2$, then $\phi$ is also an injective homomorphism.    \hfill $\square$\\

We finish this section with two remaining statements which will come into use later. 
The first is implicitly proven in \cite[Prop.~6.6]{haettel2023lattices}. We will extract its statement and proof here for completeness.

\begin{lem} \label{cor:lessiffimage}
   The vertex $g \hat t_i$ is in the image of $\psi$ if and only if $g \hat t_i \leq \sigma(g \hat t_i)$. 
\end{lem}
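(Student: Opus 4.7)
The plan hinges on the observation that $\sigma \circ \phi = \phi$, which I would first establish on generators: for $k \leq n-1$, $\phi(s_k) = t_k t_{2n-k}$ with $|k - (2n-k)| = 2n-2k \geq 2$, so $t_k$ and $t_{2n-k}$ commute in $A(A_{2n-1})$ and $\sigma(\phi(s_k)) = t_{2n-k} t_k = t_k t_{2n-k} = \phi(s_k)$; similarly $\sigma(\phi(s_n)) = \sigma(t_n) = t_n = \phi(s_n)$. Consequently the image of $\phi$ is pointwise fixed by $\sigma$.

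The forward direction then follows quickly. Suppose $g\hat t_i = \psi(h\hat s_j) = \phi(h)\hat t_j$ for some $h \in A(B_n)$ and $j \in \{1,\dots,n\}$. Since vertices of $D(A_{2n-1})$ have a well-defined type, $i = j \leq n$, so in particular $i \leq 2n-i$. Using $\sigma \circ \phi = \phi$, I compute $\sigma(g\hat t_i) = \sigma(\phi(h))\hat t_{2n-i} = \phi(h)\hat t_{2n-i}$, and this coset contains $\phi(h) \in g\hat t_i$. Hence the intersection is nonempty and $g\hat t_i \leq \sigma(g\hat t_i)$.

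For the converse --- which is the main obstacle --- suppose $g\hat t_i \leq \sigma(g\hat t_i) = \sigma(g)\hat t_{2n-i}$. The index condition forces $i \leq n$, and the intersection condition furnishes some $h_0 \in g\hat t_i \cap \sigma(g)\hat t_{2n-i}$. Replacing $g$ by $h_0$ (which does not change the vertex $g\hat t_i$), I may assume $\sigma(g)^{-1}g \in \hat t_{2n-i}$, equivalently $g^{-1}\sigma(g) \in \hat t_i$. The task then reduces to producing $h \in A(B_n)$ with $g \in \phi(h)\hat t_i$. I would accomplish this by invoking the explicit construction inside the proof of \cite[Prop.~6.6]{haettel2023lattices}: that argument gives a compatible decomposition that rewrites any element $g$ satisfying $g^{-1}\sigma(g) \in \hat t_i$ (for $i \leq n$) as a product $\phi(h)\cdot a$ with $a \in \hat t_i$. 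The anticipated technical crux is verifying that the ambiguity in such a decomposition is absorbed into $\hat t_i$, so that the output vertex $\phi(h)\hat t_i$ is well defined; this is precisely what the semi-lattice structure on $D_0(B_n)$ and the order-preservation of $\psi$ (both established in Proposition \ref{6.6}) provide.
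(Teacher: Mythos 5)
Your forward direction is fine and matches the paper's argument: after replacing $g$ by an element of the image of $\phi$, one uses that $\sigma$ fixes $\phi(A(B_n))$ pointwise to see that $g\hat t_i$ and $\sigma(g\hat t_i) = g\hat t_{2n-i}$ both contain $g$.

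The converse, however, has a genuine gap. After correctly reducing to the case $g^{-1}\sigma(g) \in \hat t_i \cap \hat t_{2n-i}$, you propose to "invoke the explicit construction inside the proof of \cite[Prop.~6.6]{haettel2023lattices}" to produce the desired decomposition $g = \phi(h)\cdot a$. But this is precisely the content of the lemma: the paper states that the result is only \emph{implicitly} proven there and that the point of this lemma is to extract and write down that argument. Deferring to it is circular for the purposes of this write-up, and moreover you have misidentified where the difficulty lies. The crux is not an "ambiguity absorbed into $\hat t_i$" resolved by the semi-lattice structure; it is producing a $\sigma$-fixed element in the coset $g\hat t_i$. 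Concretely: setting $h = g^{-1}\sigma(g)$, the involution property gives $\sigma(h) = h^{-1}$; one then uses Charney's normal form $h = h_1 h_2^{-1}$ (with $h_1, h_2$ in the positive monoid, trivial right gcd, both lying in $\hat t_i \cap \hat t_{2n-i}$) and the uniqueness of this factorization, together with the fact that $\sigma$ preserves the positive monoid, to conclude $h_2 = \sigma(h_1)$ and hence $h = h_1\sigma(h_1)^{-1}$. Then $gh_1$ is fixed by $\sigma$, and one must invoke the nontrivial theorem of Crisp \cite[Thm.~4]{crisp2000symmetrical} that the fixed subgroup of $\sigma$ equals the image of $\phi$ to conclude $g\hat t_i = gh_1\hat t_i$ lies in the image of $\psi$. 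None of these steps --- the Garside-theoretic factorization, the uniqueness argument, or the identification of $\mathrm{im}(\phi)$ with $\mathrm{Fix}(\sigma)$ --- appears in your plan, and without them the converse is not established.
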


\begin{proof}
First suppose $g \hat t_i$ is in the image of $\psi$. By choosing a different $g \in g \hat t_i$, we may assume $g$ is in the image of $\phi$. Then
\[
    \sigma(g \hat t_i) = \sigma(g) \hat t_{2n-i} = g \hat t_{2n-i}. 
\]
Therefore $g \hat t_i \cap \sigma(g \hat t_i) \not= \varnothing$ (both sets contain $g$), so these vertices are either adjacent or equal. In either case, since $i \leq (n+1)/2$, the definition of the partial order implies $g \hat t_i \leq \sigma(g \hat t_i)$.

Now suppose $g \hat t_i \leq \sigma(g \hat t_i)$. This implies $g \hat t_i \cap \sigma(g \hat t_i) = g \hat t_i \cap \sigma(g) \hat t_{2n-i} \not= \varnothing$.
So up to replacing $g$ with some element of $g \hat t_i \cap \sigma(g) \hat t_{2n-i}$, we may assume $\sigma(g) \in g \hat t_{2n-i}$. Choose $h \in \hat t_{2n-i}$ such that $\sigma(g) = gh$. Since $\sigma$ is an involution,
\[
    g = \sigma(\sigma(g)) = \sigma(gh) = \sigma(g)\sigma(h) = gh\sigma(h).
\]
This implies $\sigma(h) = h^{-1} \in \hat t_i \cap \hat t_{2n-i}$.
By \cite{charney1995geodesic}, $h$ can be factored uniquely as $h = h_1 h_2^{-1}$, with $h_1$ and $h_2$ elements of the ``positive monoid'' $A(A_{2n-1})^+$, such that the right greatest common divisor of $h_1$ and $h_2$ is the identity in the monoid. Moreover, since $h \in \hat t_i \cap \hat t_{2n-i}$, both $h_1$ and $h_2$ are in $\hat t_i \cap \hat t_{2n-i}$. Then since $\sigma$ preserves the positive monoid, 
\[
    \sigma(h_1)\sigma(h_2)^{-1} = \sigma(h) = h^{-1} = h_2h_1^{-1}.  
\]
But by uniqueness, $h_2 = \sigma(h_1)$, so we can write $h = h_1\sigma(h_1)^{-1}$. 
Now,
\[
    \sigma(gh_1) = \sigma(g)\sigma(h_1) = gh \sigma(h_1) = gh_1\sigma(h_1)^{-1}\sigma(h_1) = gh_1,
\]
so $gh_1$ is in the image of $\phi$ by \cite[Thm.~4]{crisp2000symmetrical}.
Since $h_1 \in \hat t_i$,
this means $g \hat t_i = gh_1\hat t_i$ is in the image of $\psi$.
\end{proof}

The following is a restatement of \cite[Thm.~5.6]{huang2025cycles}.

\begin{thm} \label{thm:jingyinlemma}
    Suppose $v_1$, $v_2$, and $v_3$ are vertices of $D(A_n)$ of type $\hat t_i$ for some $i \geq 3$. Suppose also that each pair of these vertices has a lower bound of type $\hat t_i$ or $\hat t_{i-1}$. Then there is a vertex $v$ of $D(A_n)$ which is a lower bound of $\{v_1,v_2,v_3\}$ and has type $\hat t_i$, $\hat t_{i-1}$ or $\hat t_{i-2}$.
\end{thm}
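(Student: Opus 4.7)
The plan is to reduce the statement to an elementary combinatorial inclusion--exclusion in the Coxeter complex $C(A_n)$ and then lift the conclusion back to $D(A_n)$ using the Garside / lattice structure on $A(A_n)$.

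First, I would dispense with the trivial reductions. Two distinct vertices of the same type $\hat t_i$ in $D(A_n)$ cannot share a lower bound of type $\hat t_i$, since the intersection condition $g\hat t_i \cap g'\hat t_i \neq \varnothing$ forces $g\hat t_i = g'\hat t_i$ once the types agree. So, after pairing any equal vertices, we may assume $v_1, v_2, v_3$ are pairwise distinct and each pairwise lower bound has type exactly $\hat t_{i-1}$. Write $v_j = g_j \hat t_i$ and let $w_{jk} = h_{jk}\hat t_{i-1}$ denote the chosen pairwise lower bounds.

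The main combinatorial step takes place in the Coxeter complex $C(A_n)$, where vertices of type $\hat t_j$ correspond canonically to $j$-subsets of $\{1,\ldots,n+1\}$ under the $S_{n+1}$-action, with the partial order on $D_0(A_n)$ projecting to reverse set-inclusion. The statement in $C(A_n)$ then becomes: three $i$-subsets $A_1, A_2, A_3$ of $\{1,\ldots,n+1\}$ with $|A_j \cap A_k| \geq i-1$ for each pair must satisfy $|A_1 \cap A_2 \cap A_3| \geq i-2$. This follows from the elementary bound
\[
    |A_1 \cap A_2 \cap A_3| \;\geq\; |A_1 \cap A_2| + |A_1 \cap A_3| - |A_1| \;\geq\; (i-1)+(i-1)-i = i-2,
\]
and any subset of $A_1 \cap A_2 \cap A_3$ of size $i-2$, $i-1$, or $i$ supplies the desired common lower bound at the Coxeter level.

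The main obstacle is lifting this combinatorial conclusion back to the Artin complex. A common lower bound in $C(A_n)$ furnishes a candidate subset but not, a priori, a single coset of $A(A_n)$ lying below all three $v_j$ simultaneously. To upgrade, I would use Garside-theoretic tools --- in the spirit of the argument used in the proof of Lemma \ref{cor:lessiffimage} above --- together with the semi-lattice structure on $D_0(A_n)$ analogous to Proposition \ref{6.6}: the pairwise lower bounds $w_{jk}$ allow one to calibrate the Garside representatives of the $g_k$ so that a single group element $g$ realizes a nonempty intersection with each of $g_1\hat t_i, g_2\hat t_i, g_3\hat t_i$ through a common coset $g\hat t_j$ of type $\hat t_i$, $\hat t_{i-1}$, or $\hat t_{i-2}$. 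Verifying that this calibration is always possible --- rather than producing three \emph{pairwise} compatible group elements that fail to coincide --- is the technical core of the argument given in \cite{huang2025cycles}.
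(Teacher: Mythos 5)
The paper does not actually prove this statement: it is presented verbatim as a restatement of \cite[Thm.~5.6]{huang2025cycles} and used as a black box, so there is no internal proof to compare against. Judged on its own terms, though, your proposal has a genuine gap, and it sits exactly where you say it does. The reduction to the Coxeter complex is correct but carries essentially no content: under the quotient $A(A_n) \to W(A_n) \cong S_{n+1}$, vertices of type $\hat t_j$ become $j$-subsets and the partial order becomes \emph{forward} set inclusion (not reverse, as you wrote --- a coset $w\hat t_{i_1}$ meets $w'\hat t_{i_2}$ with $i_1 \le i_2$ iff $w(\{1,\dots,i_1\}) \subseteq w'(\{1,\dots,i_2\})$; this slip does not affect your inclusion--exclusion bound, which is fine). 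The problem is that the projection $D(A_n) \to C(A_n)$ is wildly non-injective, and a common lower bound of the images in $C(A_n)$ gives no candidate vertex upstairs: the fibers over a given $(i-2)$-subset are cosets indexed by the pure Artin group, and nothing in your argument selects one lying below all three $v_j$. The statement is trivially true in $C(A_n)$ and hard precisely in $D(A_n)$, so the reduction does not shrink the problem.

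The "calibration" step you describe --- producing a single coset from three pairwise-compatible Garside data --- is not a technical detail to be outsourced; it \emph{is} the theorem. Neither Proposition \ref{6.6} (which concerns joins of upper-bounded sets, not meets, and is stated for $D_0(B_n)$) nor the factorization trick from Lemma \ref{cor:lessiffimage} supplies it, and Huang's actual argument is a substantial induction using the structure of parabolic subgroups of spherical Artin groups rather than a lift from the Coxeter complex. As written, your proof reduces the cited theorem to the cited theorem. If the intent is to use the result as the paper does, the honest move is to cite it outright; if the intent is to reprove it, the entire argument still needs to be supplied at the point you currently wave at.
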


\section{Filling loops in \texorpdfstring{$D(B_3)$}{D(B\_3)}}
\label{sec:shrinking}

Now we undertake verifying conditions (5) and (6) of Theorem \ref{thm:B3 CAT1 Criteria} for $D(B_3)$. 

\subsection{The 4-cycle and 6-cycle}

We begin with the 4-cycle and the 6-cycle, both of which follow easily from our setup.

\begin{lem} \label{lemma:filling 4 cycles in DB3}
    Loops in $D(B_3)$ of the type in Figure \ref{fig:checking edge loops}(a) can be filled to subcomplexes of $D(B_3)$ of the type in Figure \ref{fig:filling the edge loops}(a).
\end{lem}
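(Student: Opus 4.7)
The plan is to work inside the poset $D_0(B_3)$ and exploit its semi-lattice structure from Proposition \ref{6.6}. Label the 4-cycle cyclically as $b_1, a_1, b_2, a_2$, with $b_1, b_2$ of type $\hat s_1$ and $a_1, a_2$ of type $\hat s_3$. Adjacency in the 4-cycle says exactly that the relevant cosets intersect, so combined with the type inequality $1 \leq 3$ this yields the partial-order relations $b_i \leq a_j$ in $D_0(B_3)$ for all $i, j \in \{1,2\}$.

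Next, since $\{b_1, b_2\}$ is pairwise upper-bounded (e.g.\ by $a_1$), Proposition \ref{6.6} produces the join $v := b_1 \vee b_2$, and by the universal property $v \leq a_1$ and $v \leq a_2$. Because $D_0(B_3)$ is graded of rank $3$, with vertices of type $\hat s_i$ sitting at rank $i$ (read off from the saturated chains $0 < g\hat s_1 < g\hat s_2 < g\hat s_3$), and because $v$ lies strictly above each of the distinct rank-$1$ elements $b_1, b_2$, the join $v$ must be of rank $2$ or $3$, i.e.\ of type $\hat s_2$ or $\hat s_3$.

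The crux is ruling out the $\hat s_3$ case. If $v$ had type $\hat s_3$, then $v \leq a_1$ would say that two cosets of the parabolic subgroup $\hat s_3$ intersect, forcing $v = a_1$ by the usual fact that cosets of a fixed subgroup are either equal or disjoint; the same argument gives $v = a_2$, contradicting the embeddedness of the 4-cycle. Hence $v$ must be of type $\hat s_2$, and the relations $b_i \leq v \leq a_j$ exhibit $v$ as adjacent in the 1-skeleton of $D(B_3)$ to all four vertices of the cycle.

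Finally, the four 2-simplices that complete the subcomplex shown in Figure \ref{fig:filling the edge loops}(a) come for free from the flag property of $D(B_3)$ established by the flagness lemma of Section \ref{sec:ComCrit}. I do not anticipate any significant obstacle beyond carefully invoking the rank/type dictionary for $D_0(B_3)$ and the same-type-coset observation that is the heart of the $\hat s_3$-exclusion step.
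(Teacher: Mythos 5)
Your proposal is correct and follows essentially the same route as the paper: use Proposition \ref{6.6} to obtain the join of the two type-$\hat s_1$ vertices, exclude types $\hat s_1$ and $\hat s_3$ via embeddedness (which forces the join to be type $\hat s_2$ and adjacent to all four vertices), and then invoke flagness to fill in the 2-simplices. You simply spell out the rank/coset details that the paper's proof leaves implicit.
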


\begin{proof}
    Suppose $\gamma$ is a(n embedded) loop of this type. The vertices of type $\hat s_1$ have a common upper bound, and thus have a least upper bound $v$ By Proposition \ref{6.6}. The join $v$ cannot be type $\hat s_3$ or $\hat s_1$, since this would contradict the assumption that $\gamma$ is embedded. Thus $v$ is type $\hat s_2$. Since $v$ is a least upper bound, both vertices of $\gamma$ are comparable to $v$, and in particular there is an edge from $v$ to both of these vertices. Since $D(B_3)$ is flag, this means $\gamma$ can be filled as claimed.
\end{proof}

\begin{lem} \label{lem:filling 6 cycle in DB3}
    Suppose $\gamma$ is an embedded closed edge loop in $D(B_3)$ of the form in Figure \ref{fig:checking edge loops}(b). Then $\gamma$ is contained in a subcomplex of the form in Figure \ref{fig:filling the edge loops}(b). 
\end{lem}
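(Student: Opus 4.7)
The strategy follows Lemma \ref{lemma:filling 4 cycles in DB3}: use Proposition \ref{6.6} to extract a central vertex as a join, then invoke Lemma \ref{lemma:filling 4 cycles in DB3} on the three resulting ``inner'' 4-cycles to produce the three $\hat s_2$-midpoints. Label $\gamma$ cyclically as $A_1, B_1, A_2, B_2, A_3, B_3$, where each $A_j$ is of type $\hat s_3$ and each $B_i$ is of type $\hat s_1$, so $A_j$ is adjacent along $\gamma$ to $B_{j-1}$ and $B_j$ (indices modulo $3$). The target subcomplex demands a central $\hat s_3$-vertex $C$ adjacent to every $B_i$, together with three $\hat s_2$-vertices $M_j$ spanning 2-simplices with $A_j$, $C$, $B_{j-1}$, and $B_j$.

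The triple $\{B_1, B_2, B_3\}$ is pairwise upper bounded---each pair is bounded above by the intervening $A_j$---so by Proposition \ref{6.6} it has a join $v$. Embeddedness rules out $v$ having type $\hat s_1$. If $v$ has type $\hat s_3$, tentatively set $C = v$; if $v$ has type $\hat s_2$, then condition (3) of Theorem \ref{thm:B3 CAT1 Criteria} supplies a $\hat s_3$-vertex in $\mathrm{lk}(v, D(B_3))$ (in fact at least two such, since the bipartite link contains an embedded 4-cycle), and the complete bipartite structure makes it adjacent to every $B_i$; take $C$ to be such a vertex. Once $C$ is distinct from every $A_j$, each ``inner'' 4-cycle $A_j B_{j-1} C B_j$ is an embedded 4-cycle of the form in Figure \ref{fig:checking edge loops}(a), so Lemma \ref{lemma:filling 4 cycles in DB3} yields an $\hat s_2$-midpoint $M_j$ together with the four 2-simplices $\triangle(M_j, A_j, B_{j-1})$, $\triangle(M_j, A_j, B_j)$, $\triangle(M_j, C, B_{j-1})$, $\triangle(M_j, C, B_j)$. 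Their union with $\gamma$ is a subcomplex of $D(B_3)$ of the form in Figure \ref{fig:filling the edge loops}(b).

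The principal obstacle is confirming $C$ can always be chosen distinct from every $A_j$. The issue arises precisely when some $A_j$ is also adjacent to $B_{j+1}$ (a chord in $\gamma$), making $A_j$ itself an upper bound of all three $B_i$'s. If $v$ has type $\hat s_2$, the link of $v$ provides at least two $\hat s_3$-candidates for $C$, and we hope to select one that is not an $A_j$; if $v$ has type $\hat s_3$, then $v$ is the unique $\hat s_3$-least-upper-bound and the chord scenario forces $v = A_j$. To resolve both cases I would lift $\gamma$ via the embedding $\psi$ of Proposition \ref{gggg2} and apply Theorem \ref{thm:jingyinlemma} to the three $\overline A_i \in D(A_5)$, after upgrading their $\hat t_1$-lower bounds (coming from the $\overline B_i$) to $\hat t_2$-lower bounds via the general observation that $g\hat t_i \leq h\hat t_{i+2}$ admits an intermediate $k \hat t_{i+1}$. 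Lemma \ref{cor:lessiffimage} together with a $\sigma$-symmetry argument should then descend an additional $\hat s_3$-common-upper-bound back into $D(B_3)$, furnishing a choice of $C$ disjoint from the outer hexagon. Controlling both the type and the $\sigma$-invariance required for this descent will be the most delicate step.
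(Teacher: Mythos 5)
Your opening and closing moves match the paper's: you take the join $v$ of the three type-$\hat s_1$ vertices via Proposition \ref{6.6}, rule out type $\hat s_1$ by embeddedness, and finish by filling the three inner $4$-cycles with Lemma \ref{lemma:filling 4 cycles in DB3}. The middle is handled differently in the paper: rather than extracting a type-$\hat s_3$ neighbor from $lk(v)$ when $v$ has type $\hat s_2$, the paper shows the type-$\hat s_2$ case cannot occur (filling the inner $4$-cycles would place all of $\gamma$ inside the complete bipartite link of $v$, which it argues contradicts embeddedness), so the join itself has type $\hat s_3$ and is taken as the center. No excursion to $D(A_5)$ is made for the $6$-cycle; that machinery is reserved for the $8$-cycle.

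The genuine gap is the one you flag yourself: you never prove that $C$ can be chosen distinct from every $A_j$, and the patch you outline would not work as described. Theorem \ref{thm:jingyinlemma} produces a common \emph{lower} bound of three type-$\hat t_i$ vertices and requires their pairwise lower bounds to have type $\hat t_i$ or $\hat t_{i-1}$; the $\psi(B_i)$ have type $\hat t_1$, two steps too low, and the ``interpolation'' claim that $g\hat t_i \le h\hat t_{i+2}$ admits an intermediate $k\hat t_{i+1}$ is asserted rather than proved and appears nowhere in the paper. Even granting it, a common lower bound of the $\psi(A_j)$ is not what you need --- you need a common \emph{upper} bound of the $B_i$ of type $\hat s_3$ avoiding $\{A_1,A_2,A_3\}$ --- and the $\sigma$-compatibility needed to descend any such vertex through Lemma \ref{cor:lessiffimage} is left entirely open. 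As written, your argument establishes the lemma only under the extra hypothesis that $\gamma$ has no chord $A_j \sim B_{j+1}$ (equivalently, that the join is not one of the $A_j$), which is precisely the case you needed to exclude.
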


\begin{proof}
    The vertices of $\gamma$ of type $\hat s_1$ are pairwise upper bounded by vertices of type $\hat s_3$. By Proposition \ref{6.6}, this means they have a join, say, $v$, whose type is $\hat s_i$ for some $i$. 
    This gives a subcomplex of the form
    \[
    \begin{tikzpicture}[scale=0.8]
    \coordinate (s1 top) at (0,2) ;
    \coordinate (s1 right) at (1.85,-1);
    \coordinate (s1 left) at (-1.85,-1) ;
    \coordinate (s2 bottom) at (0,-2) ;
    \coordinate (s2 right) at (1.85,1) ;
    \coordinate (s2 left) at (-1.85,1) ;
    \coordinate (center) at (0,0);
    
    \draw (s1 top) -- (s2 right)  -- (s1 right) -- 
        (s2 bottom) -- (s1 left) -- (s2 left) -- (s1 top);

    \draw (center) -- (s2 bottom);
    \draw (center) -- (s2 left);
    \draw (center) -- (s2 right);

    \filldraw (s1 top) circle (0.05cm);
    \filldraw (s2 right) circle (0.05cm);
    \filldraw (s1 right) circle (0.05cm);
    \filldraw (s2 bottom) circle (0.05cm);
    \filldraw (s1 left) circle (0.05cm);
    \filldraw (s2 left) circle (0.05cm);
    \filldraw (s1 top) circle (0.05cm);
    
    \filldraw (center) circle (0.05cm);

    \node[above] at (s1 top) {$\hat s_3$} ;
    \node[right] at (s2 right) {$\hat s_1$} ;
    \node[below right] at (s1 right) {$\hat s_1$} ;
    \node[below] at (s2 bottom)  {$\hat s_1$} ;
    \node[below left] at (s1 left) {$\hat s_3$} ;
    \node[left] at (s2 left) {$\hat s_1$} ;
    \node[right=0.25] at (center) {$v$} ;
    \end{tikzpicture}
    \]
    Since $\gamma$ is embedded, we cannot have $i = 1$. If $i = 2$, then the 4-cycles in this diagram may be filled to obtain a subcomplex of the form in Figure \ref{fig:bad 6 filling} using Lemma \ref{lemma:filling 4 cycles} (the proof of which still applies to $D(B_3)$, even though we have not fully verified conditions (5) and (6)).
    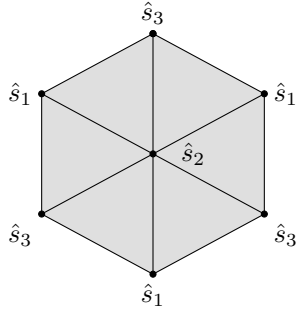
\begin{figure}[!ht]
        \centering
        \begin{tikzpicture}[scale=0.8]
        \coordinate (s1 top) at (0,2) ;
        \coordinate (s1 right) at (1.85,-1);
        \coordinate (s1 left) at (-1.85,-1) ;
        \coordinate (s2 bottom) at (0,-2) ;
        \coordinate (s2 right) at (1.85,1) ;
        \coordinate (s2 left) at (-1.85,1) ;
        \coordinate (center) at (0,0);
        
        \draw[fill=gray!25] (s1 top) -- (s2 right)  -- (s1 right) -- 
            (s2 bottom) -- (s1 left) -- (s2 left) -- (s1 top);
    
        \draw (s1 top) -- (s2 bottom);
        \draw (s1 right) -- (s2 left);
        \draw (s1 left) -- (s2 right);

        \filldraw (s1 top) circle (0.05cm);
        \filldraw (s2 right) circle (0.05cm);
        \filldraw (s1 right) circle (0.05cm);
        \filldraw (s2 bottom) circle (0.05cm);
        \filldraw (s1 left) circle (0.05cm);
        \filldraw (s2 left) circle (0.05cm);
        \filldraw (s1 top) circle (0.05cm);
        
        \filldraw (center) circle (0.05cm);
    
        \node[above] at (s1 top) {$\hat s_3$} ;
        \node[right] at (s2 right) {$\hat s_1$} ;
        \node[below right] at (s1 right) {$\hat s_3$} ;
        \node[below] at (s2 bottom)  {$\hat s_1$} ;
        \node[below left] at (s1 left) {$\hat s_3$} ;
        \node[left] at (s2 left) {$\hat s_1$} ;
        \node[right=0.25] at (center) {$\hat s_2$} ;
        \end{tikzpicture}
        \caption{A bad filling}
        \label{fig:bad 6 filling}
    \end{figure}
    But in this case, since $v$ is type $\hat s_2$, the link of $v$ is complete bipartite, thus has diameter 4 (under the edge-path metric), implying $\gamma$ could not be embedded. So we must have $i = 3$, in which case the 4-cycles in the diagram may be filled to obtain a subcomplex of the form in Figure \ref{fig:filling the edge loops}(b) using Lemma \ref{lemma:filling 4 cycles in DB3}.
\end{proof}

\subsection{The 8-cycle}
We next turn our attention to the 8-cycle, which is, in some sense, the first non-trivial case. To do this, we must utilize the material from Section \ref{sec:geometry} and the geometry of $D(A_5)$. 

\begin{lem} \label{lem:filling 8 cycle in DB3}
    If $\gamma$ is an embedded edge path in $D(B_3)$ of the form in Figure \ref{fig:checking edge loops}(c), then it is contained in a subcomplex of $D(B_3)$ of the form in Figure \ref{fig:filling the edge loops}(c). 
\end{lem}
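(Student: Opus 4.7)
The plan is to produce the two vertices $w_1$ (of type $\hat s_1$) and $w_2$ (of type $\hat s_2$) demanded by Figure \ref{fig:filling the edge loops}(c) as meets in the semi-lattice $D_0(B_3)$ of Proposition \ref{6.6}, and to verify their types by pushing forward via $\psi$ into $D(A_5)$ and invoking Theorem \ref{thm:jingyinlemma} together with the $\sigma$-symmetry. Label the vertices of $\gamma$ cyclically as $v_1,\dots,v_8$ matching Figure \ref{fig:checking edge loops}(c), so that $v_1,v_3,v_5,v_7$ are of type $\hat s_3$, $v_2,v_6,v_8$ are of type $\hat s_2$, and $v_4$ is of type $\hat s_1$ adjacent to $v_3$ and $v_5$. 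Write $\tilde v_i:=\psi(v_i)$. By Lemma \ref{cor:lessiffimage}, the four type-$\hat t_3$ vertices $\tilde v_1,\tilde v_3,\tilde v_5,\tilde v_7$ are fixed by $\sigma$, while $\sigma(\tilde v_j)\ge \tilde v_j$ for $j\in\{2,4,6,8\}$. Flagness of $D(B_3)$ will supply the 2-simplices of the filling once $w_1$ and $w_2$ are in place.

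For $w_2$, set $w_2:=v_3\wedge v_5$ in $D_0(B_3)$. Since $v_4\le v_3,v_5$ we have $w_2\ne 0$, and embeddedness of $\gamma$ rules out type $\hat s_3$. To rule out type $\hat s_1$ — which, in view of $v_4\le w_2$, would force $w_2=v_4$ — push forward to $D(A_5)$ and work with $\tilde w_2:=\tilde v_3\wedge\tilde v_5$, which satisfies $\psi(w_2)\le \tilde w_2$. Using the $\sigma$-invariance of $\tilde v_3,\tilde v_5$ and the fact that $\sigma$ is order-reversing, $\sigma(\tilde w_2)=\tilde v_3\vee\tilde v_5$ dominates $\tilde w_2$; hence by Lemma \ref{cor:lessiffimage}, $\tilde w_2$ lies in the image of $\psi$. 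Moreover $\tilde w_2$ is forced to have type $\hat t_2$ rather than $\hat t_1$ by exhibiting an explicit type-$\hat t_2$ common lower bound of $\tilde v_3,\tilde v_5$ strictly above $\tilde v_4$, constructed using the meets and joins available in $D_0(A_5)$ (the $\hat t_1$ case would force the meet to equal $\tilde v_4$). The unique preimage of $\tilde w_2$ under $\psi$ is then $w_2$, of type $\hat s_2$, and by construction $v_4\le w_2\le v_3,v_5$.

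For $w_1$, note first that the same rank argument applied to adjacent $\hat s_3$ pairs of $\gamma$ yields $v_1\wedge v_3=v_2$, $v_5\wedge v_7=v_6$, and $v_7\wedge v_1=v_8$, each of type $\hat s_2$. Set $w_1:=v_1\wedge v_3\wedge v_5\wedge v_7$ in $D_0(B_3)$; if nonzero, $w_1$ is forced to have type $\hat s_1$, since embeddedness excludes type $\hat s_3$ and type $\hat s_2$ would require $w_1$ to coincide with each of the four distinct pairwise meets $v_2,v_6,v_8,w_2$. To show $w_1\ne 0$, I apply Theorem \ref{thm:jingyinlemma} in $D(A_5)$ at $i=3$ to triples of $\{\tilde v_1,\tilde v_3,\tilde v_5,\tilde v_7\}$, iterating to produce a common lower bound $\tilde w_1$ of all four; $\sigma$-invariance of the four $\tilde v_i$'s gives $\sigma(\tilde w_1)=\tilde v_1\vee\tilde v_3\vee\tilde v_5\vee\tilde v_7\ge\tilde w_1$, so Lemma \ref{cor:lessiffimage} delivers a preimage $w_1\in D(B_3)$. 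The main obstacle is supplying the pairwise type-$\hat t_2$ (or $\hat t_3$) lower bounds that Theorem \ref{thm:jingyinlemma} requires for the opposite pairs $(\tilde v_1,\tilde v_5)$ and $(\tilde v_3,\tilde v_7)$: the 8-cycle itself gives no common neighbor of either pair, and producing the required lower bounds demands combining the meet-semi-lattice structure on $D_0(A_5)$ with the $\sigma$-symmetry of the type-$\hat t_3$ vertices, in the same spirit as the construction of $w_2$ but executed twice over.
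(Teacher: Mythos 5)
Your overall strategy (push the cycle into $D(A_5)$ via $\psi$, exploit the $\sigma$-symmetry of the type-$\hat t_3$ vertices, use the lattice structure of Proposition \ref{6.6} and Theorem \ref{thm:jingyinlemma}, and pull back with Lemma \ref{cor:lessiffimage}) is the right one, but the two steps you defer are precisely the hard content of the lemma, and neither is resolved by what you have written. First, for $w_2=v_3\wedge v_5$ you rule out type $\hat s_1$ "by exhibiting an explicit type-$\hat t_2$ common lower bound of $\tilde v_3,\tilde v_5$ strictly above $\tilde v_4$" --- but you never exhibit it, and nothing in the cycle data hands you one: the only common lower bound the cycle itself supplies for this pair is $\tilde v_4$, which has type $\hat t_1$. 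Second, and more seriously, your construction of $w_1$ requires applying Theorem \ref{thm:jingyinlemma} to triples drawn from $\{\tilde v_1,\tilde v_3,\tilde v_5,\tilde v_7\}$, but every such triple contains either the pair $(\tilde v_3,\tilde v_5)$ (whose only available lower bound has type $\hat t_1$) or an opposite pair with no available lower bound at all, so the theorem's hypotheses fail; you name this as "the main obstacle" and leave it unresolved.

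The paper's proof avoids both problems by building the auxiliary vertices in the opposite order and by working with \emph{joins} rather than meets. It forms $w_1=u_{2,1}\vee u_{2,3}$ and $w_2=u_{1,1}\vee u_{2,2}$ (these joins exist because $\sigma(u_{2,2})$, resp.\ $\sigma(u_{2,1})$, is a common upper bound), and uses embeddedness together with the order-reversing involution $\sigma$ to force $w_1=\sigma(w_2)$ to have type $\hat t_3$. This manufactured type-$\hat t_3$ vertex is then combined with $u_{3,2}$ and $u_{3,3}$ into a triple whose pairwise lower bounds $u_{2,1},u_{2,2},u_{2,3}$ all have type $\hat t_2$, so Theorem \ref{thm:jingyinlemma} applies legitimately and produces the central type-$\hat t_1$ vertex $z$; the symmetry $z\le\sigma(z)$ and Lemma \ref{cor:lessiffimage} pull it back to the type-$\hat s_1$ vertex $z'$. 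Only \emph{then} is the new type-$\hat s_2$ vertex obtained, by filling the 4-cycle on $z'$, $v_{1,1}$, and the two adjacent type-$\hat s_3$ vertices via Lemma \ref{lemma:filling 4 cycles in DB3}. If you want to salvage your outline, you should construct the central $\hat s_1$ vertex first (via the join trick above) and derive the $\hat s_2$ vertex from it, rather than the reverse.
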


\begin{proof}
Label the vertices of $\gamma$ by $v_{i,j}$, where $v_{i,j}$ is type $\hat s_i$ and $j$ is assigned as follows:
\[
\begin{tikzpicture}[scale=1.2]
        \coordinate (s31) at (0,0) ;
        \coordinate (s21) at (1,0) ;
        \coordinate (s32) at (2,0);
        \coordinate (s1) at (3,-1) ;
        \coordinate (s33) at (2,-2) ;
        \coordinate (s22) at (1,-2) ;
        \coordinate (s34) at (0,-2) ;
        \coordinate (s23) at (0,-1) ;
        
        \filldraw (s31) circle (0.05cm);
        \filldraw (s21) circle (0.05cm);
        \filldraw (s32) circle (0.05cm);
        \filldraw (s1) circle (0.05cm);
        \filldraw (s33) circle (0.05cm);
        \filldraw (s22) circle (0.05cm);
        \filldraw (s34) circle (0.05cm);
        \filldraw (s23) circle (0.05cm);

        \draw (s31) -- (s21)  -- (s32) -- (s1) --(s33) -- (s22) -- (s34)  -- (s23) -- (s31);

        \node[above left] at (s31) {$v_{3,2}$} ;
        \node[above] at (s21) {$v_{2,1}$} ;
        \node[above] at (s32) {$v_{3,1}$} ;
        \node[right] at (s1)  {$v_{1,1}$} ;
        \node[below] at (s33) {$v_{3,4}$} ;
        \node[below] at (s22) {$v_{2,3}$} ;
        \node[below left] at (s34) {$v_{3,3}$} ;
        \node[left] at (s23) {$v_{2,2}$} ;
        \end{tikzpicture}
\]
The Hasse diagram of the vertices $V(\gamma)$ of $\gamma$ under the ordering on the vertices of $D(B_3)$ is as follows:
\[
    \begin{tikzcd}[row sep=1em, column sep=0.5em]
    v_{3,1} 
            &
            & v_{3,2} %
            &
            & v_{3,3} %
            & 
            & v_{3,4} %
            \\
    & v_{2,1} \arrow[dash]{ul}{} \arrow[dash]{ur}{}     
            &
            & v_{2,2} \arrow[dash]{ul} \arrow[dash]{ur}
            &
            & v_{2,3} \arrow[dash]{ul} \arrow[dash]{ur} \\
    & & & v_{1,1} \arrow[dash,bend left=33]{uulll}{} 
                    \arrow[dash,swap,bend right=33]{uurrr}
\end{tikzcd}
\]
For each $i$ and $j$, let $u_{i,j} = \psi(v_{i,j})$. 
Since $\psi$ is an order-preserving injection, the Hasse diagram for the $u_{i,j}$ is identical to that of the $v_{i,j}$. Note that $\sigma(u_{i,j})$ is a type $\hat t_{6-i}$ vertex. 
Since each $u_{i,j}$ is in the image of $\psi$, we know $u_{i,j} \leq \sigma(u_{i,j})$. 
In particular, for each $j$, since $u_{3,j}$ and $\sigma(u_{3,j})$ are the same type, they must be equal. Since $\sigma$ is an order-reversing bijection, the Hasse diagram of the $u_{i,j}$ along with their images under $\sigma$ is as follows:
\[
\begin{tikzcd}[row sep=1em, column sep=0.25em]
    & & & \sigma(u_{1,1}) \\[0.5em]
    & \sigma(u_{2,1})   
            &
            & \sigma(u_{2,2}) 
            &
            & \sigma(u_{2,3}) \\    
    \sigma(\hat t_{3_1}) \arrow[dash]{ur} \arrow[dash, bend left=35]{uurrr}
            &
            & \sigma(u_{3,2}) \arrow[dash]{ur} \arrow[dash]{ul}
            &
            & \sigma(u_{3,3}) \arrow[dash]{ur} \arrow[dash]{ul}
            &
            & \sigma(u_{3,4}) \arrow[dash]{ul} \arrow[dash, bend right=35]{uulll}
            \\
    u_{3,1} \arrow[equal]{u} %
            &
            & u_{3,2} \arrow[equal]{u}
            &
            & u_{3,3} \arrow[equal]{u}
            & 
            & u_{3,4} \arrow[equal]{u}\\
    & u_{2,1} \arrow[dash]{ul} \arrow[dash]{ur}     
            &
            & u_{2,2} \arrow[dash]{ul} \arrow[dash]{ur}
            &
            & u_{2,3} \arrow[dash]{ul} \arrow[dash]{ur} \\[0.5em]
    & & & u_{1,1} \arrow[dash,bend left=35]{uulll}{} 
                   \arrow[dash,swap,bend right=35]{uurrr}
\end{tikzcd}
\]
Let $w_1$ be the join (i.e., the least upper bound) of $u_{2,1}$ and $u_{2,3}$ in $D_0(A_5)$ and 
let $w_2$ be the join of $u_{1,1}$ and $u_{2,2}$ in $D_0(A_5)$.
Notice that $\sigma(u_{2,2})$ is an upper bound of $u_{2,1}$ and $u_{2,3}$, and $\sigma(u_{2,1})$ is an upper bound of $u_{1,1}$ and $u_{2,2}$.  This means $w_1$ and $w_2$ are actually contained in $D(A_5)$ (i.e., neither are $0$ or $1$).
Again since $\sigma$ is an order-reversing injection, $\sigma(w_1)$ is the meet (i.e., greatest lower bound) of $\sigma(u_{2,1})$ and $\sigma(u_{2,3})$, and $\sigma(w_2)$ is the meet of $\sigma(u_{1,1})$ and $\sigma(u_{2,2})$.
Adding these points to the above Hasse diagram gives the following:

\[
 \begin{tikzcd}[row sep=1em, column sep=0.5em]
    & 
         & & 
             \sigma(u_{1,1}) \arrow[dash, dashed,bend right, thick, cyan]{dddd} \\
     & \sigma(u_{2,1}) \arrow[dash, dashed, bend right, red]{ddddrr}
             &
             & \sigma(u_{2,2}) \arrow[dash, dashed, bend left, thick, cyan]{ddd}
             &
             & \sigma(u_{2,3}) \arrow[dash, dashed, bend left, red]{ddddll} \\    
     \sigma(\hat t_{3_1}) \arrow[dash]{ur} \arrow[dash, bend left]{uurrr}
             &
             & \sigma(u_{3,2}) \arrow[dash]{ur} \arrow[dash]{ul}
             &
             & \sigma(u_{3,3}) \arrow[dash]{ur} \arrow[dash]{ul}
             &
             & \sigma(u_{3,4}) \arrow[dash]{ul} \arrow[dash, bend right]{uulll}
             \\
             \\[-1em]
                 & & & {\color{cyan}\sigma(w_2)}& & &
             \\[-0.5em]
                 & & & {\color{red}\sigma(w_1)} & & &
             \\[-1em]
             \\[-0.5em]
     & & & {\color{red}w_1}& & &
             \\[-0.5em]
     & & & {\color{cyan}w_2} & & &
             \\[-1em]
  u_{3,1} \arrow[equal]{uuuuuuu} 
            &
            & u_{3,2} \arrow[equal]{uuuuuuu}
             &
             & u_{3,3} \arrow[equal]{uuuuuuu}
             & 
             & u_{3,4} \arrow[equal]{uuuuuuu}\\
    & u_{2,1} \arrow[dash]{ul}{} \arrow[dash]{ur}
                \arrow[dash, dashed, bend left, red]{uuurr}
             &
             & u_{2,2} \arrow[dash]{ul} \arrow[dash]{ur}
                        \arrow[dash, dashed, bend left, thick, cyan]{uu}
             &
             &  u_{2,3} \arrow[dash]{ul} \arrow[dash]{ur} 
                        \arrow[dash, dashed, bend right, red]{uuull}\\
    & 
     & & u_{1,1} \arrow[dash,bend left]{uulll} 
                    \arrow[dash,swap,bend right]{uurrr}
                    \arrow[dash, dashed,bend right, thick, cyan]{uuu}
 \end{tikzcd}
\]
Since we're assuming $\gamma$ is embedded, both $w_1$ and $w_2$ cannot be type $\hat t_1$ or type $\hat t_2$; otherwise, they would necessarily be equal to one of the $u_{i,j}$ and thus $\gamma$ would not have been embedded to start with. This consequently means $\sigma(w_1)$ and $\sigma(w_2)$ are both not type $\hat t_5$ or type $\hat t_4$. 

Now note that $u_{2,1}$ and $u_{2,3}$ are both lower bounds of the set $\{\sigma(u_{1,1}), \sigma(u_{2,2})\}$. Since $\sigma(w_2)$ is the greatest lower bound of this set, $u_{2,1} \leq \sigma(w_2)$ and $u_{2,3} \leq \sigma(w_2)$. But then $\sigma(w_2)$ is an upper bound for the set $\{u_{2,1},u_{2,3}\}|$. Since $w_1$ is the least upper bound of this set, $w_1 \leq \sigma(w_2)$. An identical argument shows $w_2 \leq \sigma(w_1)$. This means the Hasse diagram is actually as follows:
\[
\begin{tikzcd}[row sep=1em, column sep=0.5em]
    & 
        & & 
            \sigma(u_{1,1}) \arrow[dash, dashed,bend right, thick, cyan]{dddd} \\
    & \sigma(u_{2,1}) \arrow[dash, dashed, bend right, red]{ddddrr}
            &
            & \sigma(u_{2,2}) \arrow[dash, dashed, bend left, thick, cyan]{ddd}
            &
            & \sigma(u_{2,3}) \arrow[dash, dashed, bend left, red]{ddddll} \\    
    \sigma(\hat t_{3_1}) \arrow[dash]{ur} \arrow[dash, bend left]{uurrr}
            &
            & \sigma(u_{3,2}) \arrow[dash]{ur} \arrow[dash]{ul}
            &
            & \sigma(u_{3,3}) \arrow[dash]{ur} \arrow[dash]{ul}
            &
            & \sigma(u_{3,4}) \arrow[dash]{ul} \arrow[dash, bend right]{uulll}
            \\
            \\[-1em]
                & & & {\color{cyan}\sigma(w_2)} \arrow[dash, thick, blue, in=30,out=-40]{ddd}& & &
            \\[-0.5em]
                & & & {\color{red}\sigma(w_1)} 
                    \arrow[dash, blue, thick, in=150,out=220]{ddd}& & &
            \\[-1em]
            \\[-0.5em]
    & & & {\color{red}w_1}& & &
            \\[-0.5em]
    & & & {\color{cyan}w_2} & & &
            \\[-1em]
 u_{3,1} \arrow[equal]{uuuuuuu} 
            &
            & u_{3,2} \arrow[equal]{uuuuuuu}
            &
            & u_{3,3} \arrow[equal]{uuuuuuu}
            & 
            & u_{3,4} \arrow[equal]{uuuuuuu}\\
    & u_{2,1} \arrow[dash]{ul}{} \arrow[dash]{ur}
               \arrow[dash, dashed, bend left, red]{uuurr}
            &
            & u_{2,2} \arrow[dash]{ul} \arrow[dash]{ur}
                       \arrow[dash, dashed, bend left, thick, cyan]{uu}
            &
            &  u_{2,3} \arrow[dash]{ul} \arrow[dash]{ur} 
                       \arrow[dash, dashed, bend right, red]{uuull}\\
    & 
    & & u_{1,1} \arrow[dash,bend left]{uulll} 
                   \arrow[dash,swap,bend right]{uurrr}
                   \arrow[dash, dashed,bend right, thick, cyan]{uuu}
\end{tikzcd}
\]
Since $w_1$ is not type $\hat t_1$ or $\hat t_2$, and $\sigma(w_2)$ is not type $\hat t_4$ or $\hat t_5$, it follows that in fact $w_1$ and $\sigma(w_2)$ are both type $\hat t_3$, and so $w_1 = \sigma(w_2)$. Similarly, $w_2 = \sigma(w_1)$ and this vertex is type $\hat t_3$. 

Consider the vertices $w_1$, $u_{3,2}$, and $u_{3,3}$. They are type $\hat t_3$ vertices which are pairwise lower bounded by vertices of type $\hat t_2$. By Theorem \ref{thm:jingyinlemma}, we can find a  vertex $z$ which is a lower bound of all three vertices. Since $\gamma$ is embedded, $z$ cannot be type $\hat t_3$ or $\hat t_2$, so it must be type $\hat t_1$. Adding $z$ and $\sigma(z)$ to the diagram gives
\[
\begin{tikzcd}[row sep=1em, column sep=0.5em]
  & {\color{blue}\sigma(z)} \arrow[dash, thick, blue]{d}
                                \arrow[dash, thick, blue]{drr}
                                \arrow[dash, thick, blue]{drrrr}
        & & 
            \sigma(u_{1,1}) \arrow[dash, dashed,bend right, thick, cyan]{dddd} \\
    & \sigma(u_{2,1}) \arrow[dash, dashed, bend right, red]{ddddrr}
            &
            & \sigma(u_{2,2}) \arrow[dash, dashed, bend left, thick, cyan]{ddd}
            &
            & \sigma(u_{2,3}) \arrow[dash, dashed, bend left, red]{ddddll} \\    
    \sigma(\hat t_{3_1}) \arrow[dash]{ur} \arrow[dash, bend left]{uurrr}
            &
            & \sigma(u_{3,2}) \arrow[dash]{ur} \arrow[dash]{ul}
            &
            & \sigma(u_{3,3}) \arrow[dash]{ur} \arrow[dash]{ul}
            &
            & \sigma(u_{3,4}) \arrow[dash]{ul} \arrow[dash, bend right]{uulll}
            \\
            \\[-1em]
                & & & {\color{cyan}\sigma(w_2)} \arrow[dash, thick, blue, in=30,out=-40]{ddd}& & &
            \\[-0.5em]
                & & & {\color{red}\sigma(w_1)} 
                    \arrow[dash, blue, thick, in=150,out=220]{ddd}& & &
            \\[-1em]
            \\[-0.5em]
    & & & {\color{red}w_1}& & &
            \\[-0.5em]
    & & & {\color{cyan}w_2} & & &
            \\[-1em]
 u_{3,1} \arrow[equal]{uuuuuuu} 
          &
          & u_{3,2} \arrow[equal]{uuuuuuu}
            &
            & u_{3,3} \arrow[equal]{uuuuuuu}
            & 
            & u_{3,4} \arrow[equal]{uuuuuuu}\\
  & u_{2,1} \arrow[dash]{ul}{} \arrow[dash]{ur}
               \arrow[dash, dashed, bend left, red]{uuurr}
            &
            & u_{2,2} \arrow[dash]{ul} \arrow[dash]{ur}
                       \arrow[dash, dashed, bend left, thick, cyan]{uu}
            &
            &  u_{2,3} \arrow[dash]{ul} \arrow[dash]{ur} 
                       \arrow[dash, dashed, bend right, red]{uuull}\\
  & {\color{blue}z} \arrow[dash, thick, blue]{u}
                                \arrow[dash, thick, blue]{urr}
                                \arrow[dash, thick, blue]{urrrr}
    & & u_{1,1} \arrow[dash,bend left]{uulll} 
                   \arrow[dash,swap,bend right]{uurrr}
                   \arrow[dash, dashed,bend right, thick, cyan]{uuu}
\end{tikzcd}
\]
And now we see that $z \leq \sigma(z)$, so by Lemma \ref{cor:lessiffimage}, $z$ is in the image of $\psi$, say $z = \psi(z')$ for some vertex $z'$ of type $\hat s_1$. So, we have a subcomplex of $D(B_3)$ of the form 
\[
\begin{tikzpicture}[scale=1.2]
    \coordinate (s31) at (0,0) ;
    \coordinate (s21) at (1,0) ;
    \coordinate (s32) at (2,0);
    \coordinate (s1) at (3,-1) ;
    \coordinate (s33) at (2,-2) ;
    \coordinate (s22) at (1,-2) ;
    \coordinate (s34) at (0,-2) ;
    \coordinate (s23) at (0,-1) ;

    \coordinate (s11) at (1,-1) ;
    \coordinate (s24) at (2,-1) ;
    
    \draw[fill=gray!25] (s31) -- (s21)  -- (s32) -- (s11) --(s33) -- (s22) -- (s34)  -- (s23) -- (s31);
    
    \filldraw (s31) circle (0.05cm);
    \filldraw (s21) circle (0.05cm);
    \filldraw (s32) circle (0.05cm);
    \filldraw (s1) circle (0.05cm);
    \filldraw (s33) circle (0.05cm);
    \filldraw (s22) circle (0.05cm);
    \filldraw (s34) circle (0.05cm);
    \filldraw (s23) circle (0.05cm);
    \filldraw (s11) circle (0.05cm);

    \draw (s31) -- (s33);
    \draw (s32) -- (s34);
    \draw (s21) -- (s22);
    \draw (s23) -- (s11);
    \draw (s32) -- (s1);
    \draw (s33) -- (s1);

    \node[above left] at (s31) {$v_{3,2}$} ;
    \node[above] at (s21) {$v_{2,1}$} ;
    \node[above] at (s32) {$v_{3,1}$} ;
    \node[right] at (s1)  {$v_{1,1}$} ;
    \node[below] at (s33) {$v_{3,4}$} ;
    \node[below] at (s22) {$v_{2,3}$} ;
    \node[below left] at (s34) {$v_{3,4}$} ;
    \node[left] at (s23) {$v_{2,2}$} ;
    \node at (1.3,-0.95) {$z'$};
    \end{tikzpicture}
\]
By Lemma \ref{lemma:filling 4 cycles in DB3}, the 4-cycle made of $z$, $v_{1,1}$, $v_{3,1}$, and $v_{3,4}$ can be filled with a central vertex $v_{2,4}$ of type $\hat s_2$, giving the subcomplex 
\[
\begin{tikzpicture}[scale=1.2]
        \coordinate (s31) at (0,0) ;
        \coordinate (s21) at (1,0) ;
        \coordinate (s32) at (2,0);
        \coordinate (s1) at (3,-1) ;
        \coordinate (s33) at (2,-2) ;
        \coordinate (s22) at (1,-2) ;
        \coordinate (s34) at (0,-2) ;
        \coordinate (s23) at (0,-1) ;

        \coordinate (s11) at (1,-1) ;
        \coordinate (s24) at (2,-1) ;
        
        \draw[fill=gray!25] (s31) -- (s21)  -- (s32) -- (s1) --(s33) -- (s22) -- (s34)  -- (s23) -- (s31);
        
        \filldraw (s31) circle (0.05cm);
        \filldraw (s21) circle (0.05cm);
        \filldraw (s32) circle (0.05cm);
        \filldraw (s1) circle (0.05cm);
        \filldraw (s33) circle (0.05cm);
        \filldraw (s22) circle (0.05cm);
        \filldraw (s34) circle (0.05cm);
        \filldraw (s23) circle (0.05cm);
        \filldraw (s24) circle (0.05cm);
        \filldraw (s11) circle (0.05cm);

        \draw (s31) -- (s33);
        \draw (s32) -- (s34);
        \draw (s32) -- (s33);
        \draw (s21) -- (s22);
        \draw (s23) -- (s1);

        \node[above left] at (s31) {$v_{3,2}$} ;
    \node[above] at (s21) {$v_{2,1}$} ;
    \node[above] at (s32) {$v_{3,1}$} ;
    \node[right] at (s1)  {$v_{1,1}$} ;
    \node[below] at (s33) {$v_{3,4}$} ;
    \node[below] at (s22) {$v_{2,3}$} ;
    \node[below left] at (s34) {$v_{3,4}$} ;
    \node[left] at (s23) {$v_{2,2}$} ;
        \node[above right] at (s24) {$v_{2,4}$} ;
    \node at (1.4,-0.8) {$z'$};
        \end{tikzpicture}
\]
as claimed.
\end{proof}

\subsection{The 10-cycle} 
Last we approach the 10-cycle. In general, a cycle of this length is very difficult to deal with, and progress was only possible with the advent of new methods by Huang \cite{huang2025cycles}.

For the rest of this section, let $\tilde \gamma$ a cycle of the type in Figure \ref{fig:bad 10 cycle}. In order to show $\tilde \gamma$ is not embedded, our analysis must be broken into two cases, which we describe now. 
We begin by briefly recalling material defined in \cite[\S 4]{huang2025cycles},  rephrasing the definitions in a manner more immediately useful to us (one may readily verify that they are equivalent).

\begin{defn}
    Let $\Gamma$ be a spherical-type Coxeter-Dynkin diagram.
    Denote by $\rho : D(\Gamma) \to C(\Gamma)$ the quotient map induced by the natural quotient $A(\Gamma) \to W(\Gamma)$. Let $s \in V(\Gamma)$ be a standard generator of $W(\Gamma)$, and let $r$ be any conjugate of $s$.
    (In other words, $r$ is a reflection in $W(\Gamma)$.)  
    Let $H$ be the subcomplex of $C(\Gamma)$ fixed (pointwise) by $r$. Then $C(\Gamma) \setminus H$ has two connected components, permuted isomorphically by $r$. Choose one of these open components (called a \emph{halfspace}), and let $U$ be the largest (closed) subcomplex of $C(\Gamma)$ contained in this component.
    We call any lift of $U$ along $\rho$ a \emph{Falk subcomplex of type $s$} in $D(\Gamma)$. 
\end{defn}

\begin{rem}
    Different choices of lifts of $U$ will give rise to distinct, but isomorphic, choices of Falk subcomplex. For our purposes, the specific choice of Falk subcomplex will be less important than its type. In general, two Falk subcomplexes of the same type will be isomorphic, hence the ``type'' of a Falk subcomplex is well-defined. To this end, we also emphasize that if two generators $t$ and $s$ are conjugate in $W(\Gamma)$, then a Falk subcomplex of type $t$ is isomorphic to a Falk subcomplex of type $s$. 
\end{rem}

We now return to the specific case of $B_3$. We have the following result regarding Falk subcomplexes in $D(B_3)$:

\begin{prop}
    Suppose $D'$ is a Falk subcomplex of type $s_3$ in $D(B_3)$. Then the ``orthoscheme metric'' on $D'$ is $\cat(0)$. 
\end{prop}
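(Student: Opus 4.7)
The plan is to exhibit $D'$ as a subcomplex of $D(A_5)$ via the order-preserving embedding $\psi : D_0(B_3) \hookrightarrow D_0(A_5)$ from Proposition \ref{gggg2}, and then to deduce $\cat(0)$-ness of the orthoscheme metric from a corresponding statement in type $A$. First, I would identify $D'$ with the order complex of an induced sub-poset $P \subseteq D_0(B_3)$ consisting of those cosets whose image in $C(B_3)$ lies in the chosen closed halfspace. Because this halfspace is cut out by a wall fixed pointwise by a reflection $r$ conjugate to $s_3$, the sub-poset $P$ should be closed under the joins and meets existing in $D_0(B_3)$: any join leaving $P$ would admit a symmetric copy under $r$, contradicting the uniqueness of joins guaranteed by Proposition \ref{6.6}. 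Hence $P$ is a convex sub-semi-lattice of $D_0(B_3)$ inheriting the rank-$3$ grading, and its order complex is precisely $D'$.

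Next, I would use $\psi$ to transport $P$ into $D_0(A_5)$. By Lemma \ref{cor:lessiffimage}, the vertices of $\psi(P)$ are precisely those $g \hat t_i$ satisfying $g \hat t_i \leq \sigma(g \hat t_i)$, and the interaction with the order-reversing involution $\sigma$ confines $\psi(P)$ to a controlled, convex region of $D_0(A_5)$—concretely, the image should lie inside an interval or Falk subcomplex of $D(A_5)$ of type $t_3$. The orthoscheme metric on the realization of such a region in $D(A_5)$ is $\cat(0)$ by known results for orthoscheme complexes of sub-lattices in type $A$, in the lattice-theoretic framework of Haettel \cite{haettel2023lattices}. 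Since $\psi$ is an order-preserving injection of graded semi-lattices, it induces an isometric embedding of the orthoscheme realization of $D'$ onto a convex subcomplex of the orthoscheme realization of $\psi(P)$, and $\cat(0)$-ness therefore transfers to $D'$.

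The main obstacle I expect is establishing this convexity of $\psi(P)$ inside $D_0(A_5)$: one must verify that meets and joins of $\psi(P)$-elements, computed in $D_0(A_5)$, do not escape the Falk halfspace. Theorem \ref{thm:jingyinlemma} is the principal tool here for producing common lower bounds at the correct ranks when three vertices of high type must be joined, while Lemma \ref{cor:lessiffimage} certifies that these bounds lie in the image of $\psi$—and hence in $\psi(P)$—by checking the $\sigma$-comparability condition. Once these closure properties are in hand, the isometric embedding plus convexity argument will suffice, but the combinatorial bookkeeping of which cosets are fixed by $r$ versus $\sigma$, and how these two symmetries interact on the image of $\psi$, is likely to be the technically delicate step of the proof.
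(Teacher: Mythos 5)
Your proposal goes in a genuinely different direction from the paper, and unfortunately it rests on steps that do not hold up. The paper's actual proof is short and direct: since every $2$-simplex of $D'$ carries the Euclidean isosceles right triangle metric (angle $\pi/2$ at $\hat s_2$, angle $\pi/4$ at $\hat s_1$ and $\hat s_3$), the Gromov link condition reduces everything to girth bounds on vertex links, and \cite[Prop.~A.1]{huang2025cycles} supplies exactly these: links of $\hat s_2$-vertices in $D'$ are complete bipartite (girth $4$, so systole $4\cdot\pi/2 = 2\pi$) and links of $\hat s_1$- and $\hat s_3$-vertices have girth at least $8$ (systole $8\cdot\pi/4 = 2\pi$). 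No embedding into type $A$ and no lattice-theoretic convexity is needed.

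The concrete gaps in your route are the following. First, your closure argument for $P$ under joins — ``any join leaving $P$ would admit a symmetric copy under $r$'' — is not valid: $r$ is a reflection in $W(B_3)$ and acts on the Coxeter complex $C(B_3)$, but it does not act on the Deligne complex $D(B_3)$, so there is no ``symmetric copy'' of a coset of $A(B_3)$ to contradict uniqueness of joins. Second, and more seriously, the endpoint of your reduction — that the orthoscheme metric on the relevant region of $D_0(A_5)$ is $\cat(0)$ ``by known results'' — is not an available input. Proposition \ref{6.6} gives the semi-lattice structure, not $\cat(0)$-ness of any orthoscheme realization; the $\cat(0)$ property for orthoscheme complexes attached to these Deligne-complex posets in type $A$ is precisely the kind of statement this circle of papers is working to establish, not a black box one can cite. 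Third, even granting both points, an order-preserving injection of graded posets does not automatically induce an isometric embedding of orthoscheme realizations onto a \emph{convex} subcomplex (one would need links of the image to be full and locally convex), so the final transfer of $\cat(0)$-ness back to $D'$ is also unjustified. If you want a self-contained argument rather than the citation to Huang, the efficient path is to verify the two link girth conditions for $D'$ directly and apply the standard $2$-dimensional link criterion together with simple connectivity of $D'$.
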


\begin{proof}
    The orthoscheme metric on $D'$ is a piecewise Euclidean metric which, for all 2-simplices, assigns an angle of $\pi/2$ to vertices of type $\hat s_2$, and an angle of $\pi/4$ to vertices of both type $\hat s_1$ and $\hat s_3$. (In other words, every 2-simplex is a Euclidean isosceles right triangle.)
    After unraveling definitions, \cite[Prop.~A.1]{huang2025cycles} implies 
    \begin{enumerate}
        \item The link of a vertex of type $\hat s_2$ in $D'$ is complete bipartite (in particular, has girth 4), and
        \item The link of a vertex of type $\hat s_1$ or $\hat s_3$ in $D'$ has girth at least $8$. 
    \end{enumerate}
    Hence this metric is $\cat(0)$.
\end{proof}

Since these subcomplexes carry a nice $\cat(0)$ metric, we will call any Falk subcomplex of type $\hat s_3$ a \emph{good Falk subcomplex}. Our first case in the analysis of 10-cycles is the following.

\begin{lem} \label{lem:good falk 10}
    If $\tilde \gamma$ is contained in a good Falk subcomplex, then it is not embedded. 
\end{lem}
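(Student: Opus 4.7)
The plan is to assume for contradiction that $\tilde\gamma$ is an embedded 10-cycle in a good Falk subcomplex $D'$, and derive a contradiction from the CAT(0) structure via a disk-filling argument together with the poset machinery of Section \ref{sec:geometry}. Label the cycle vertices $A_1,\dots,A_5$ (of type $\hat s_3$) and $B_1,\dots,B_5$ (of type $\hat s_2$) cyclically, so that each $B_i$ is adjacent to $A_i$ and $A_{i+1}$ (indices mod $5$). Since $D'$ is CAT(0) with the orthoscheme metric, it is simply connected, so $\tilde\gamma$ bounds a reduced simplicial disk diagram $D \to D'$.

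First I will establish combinatorial constraints on $D$. Since every $2$-simplex of $D'$ has one vertex of each of the three types $\hat s_1,\hat s_2,\hat s_3$, and $\tilde\gamma$ contains no vertex of type $\hat s_1$: (i) every interior vertex of $D$ is of type $\hat s_1$; (ii) at each boundary vertex at least two triangles of $D$ must meet, because a single $2$-simplex cannot contain two vertices of the same type. Consequently the interior angles at boundary vertices satisfy $\theta_D(A_i)\geq \pi/2$ and $\theta_D(B_j)\geq \pi$.

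Next I will apply combinatorial Gauss-Bonnet to $D$, using that every triangle is Euclidean and contributes no curvature:
\[
    2\pi \;=\; \sum_{v\in\partial D}\bigl(\pi - \theta_D(v)\bigr) \;+\; \sum_{v\in\mathring D}\bigl(2\pi - \theta_D(v)\bigr).
\]
The CAT(0) condition on the orthoscheme metric (equivalent to girth $\geq 8$ at $\hat s_1$-vertices) makes every interior defect $\leq 0$, while the boundary bounds force $\sum_{\partial D}(\pi-\theta_D(v)) \leq 5\pi/2$. The resulting narrow window $2\pi \leq \sum_{\partial D}(\pi-\theta_D) \leq 5\pi/2$ is tight, and a local analysis of the realizing configurations will show that $D$ is, up to additional interior vertices of angle defect $0$, the \emph{cone} on $\tilde\gamma$ from a single $\hat s_1$-vertex $u$ simultaneously adjacent to all ten boundary vertices (so that $u$ is a common lower bound of every $A_i$ and every $B_j$ in $D_0(B_3)$). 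In the remaining slack cases, the same conclusion is reached by chasing meets through the semi-lattice $D_0(B_3)$ of Proposition \ref{6.6}.

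The main obstacle will be the final step: ruling out the existence of such a common lower bound $u$. My plan here is to lift through $\psi : D(B_3)\to D(A_5)$: setting $u_i = \psi(A_i)$, $w_j = \psi(B_j)$, and $\tilde u = \psi(u)$, the five $\hat t_3$-vertices $\{u_1,\dots,u_5\}$ in $D(A_5)$ share the common lower bound $\tilde u$ (of type $\hat t_1$) while each consecutive pair admits the lower bound $w_i$ of type $\hat t_2$. I will then apply Theorem \ref{thm:jingyinlemma} iteratively to triples $u_i, u_{i+1}, u_{i+2}$, using the $\sigma$-invariance of the image of $\psi$ from Lemma \ref{cor:lessiffimage} to keep intermediate vertices in the image, and produce either a common lower bound of type $\hat t_2$ or $\hat t_3$ for a nonconsecutive pair or a direct identification of two $u_i$'s. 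By injectivity of $\psi$, either outcome forces two of the $A_i$'s to coincide, contradicting the embeddedness of $\tilde\gamma$. The delicate point will be verifying that the hypotheses of Theorem \ref{thm:jingyinlemma} propagate through the iteration — that is, that each newly produced lower bound still has type $\hat t_2$ or $\hat t_3$ rather than dropping to $\hat t_1$ prematurely — which I expect to handle by a careful case analysis exploiting the $\sigma$-symmetry of the configuration.
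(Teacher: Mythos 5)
Your overall strategy (fill the putative embedded 10-cycle by a disk diagram in the $\cat(0)$ orthoscheme metric and run combinatorial Gauss--Bonnet) is a reasonable alternative to the paper's argument, which instead locates a single vertex of type $\hat s_3$ where $\tilde\gamma$ fails to be locally geodesic, produces from the bipartite link structure one vertex $w$ of type $\hat s_1$ adjacent to five consecutive vertices of $\tilde\gamma$, fills the resulting 8-cycle using Lemma \ref{lem:filling 8 cycle in DB3}, and then exhibits a 6-cycle in $lk(w)$, contradicting girth $8$. However, as written your argument has genuine gaps. First, claim (i) --- that every interior vertex of $D$ has type $\hat s_1$ --- does not follow from the stated premises: an interior vertex may have type $\hat s_2$ or $\hat s_3$, with the $\hat s_1$-corners of its triangles lying at other (interior or boundary) vertices. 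Second, the Gauss--Bonnet window $2\pi \le \sum_{\partial D}(\pi-\theta_D)\le 5\pi/2$ does not force $D$ to be a cone on $\tilde\gamma$ from a single $\hat s_1$-vertex; the configuration the paper actually finds (a vertex $w$ adjacent to only three consecutive $\hat s_2$/$\hat s_3$-vertices of the boundary, extended by complete-bipartiteness to five) is itself a non-cone diagram consistent with your inequalities, and the ``remaining slack cases'' are dismissed without analysis.

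Third, and most seriously, even if you could produce a cone point $u$ of type $\hat s_1$ adjacent to all ten boundary vertices, this is not by itself a contradiction: $\tilde\gamma$ would then be a 10-cycle in $lk(u,D(B_3))$, and condition (2) only gives girth $8$, which a 10-cycle can satisfy while embedded. Your proposed escape via $\psi$ and Theorem \ref{thm:jingyinlemma} does not close this: for a triple $u_i,u_{i+1},u_{i+2}$ the non-consecutive pair $(u_i,u_{i+2})$ is only known to have the common lower bound $\psi(u)$ of type $\hat t_1$, which does not meet the hypothesis of Theorem \ref{thm:jingyinlemma} (a lower bound of type $\hat t_3$ or $\hat t_2$ is required), so the iteration cannot start; you flag this as ``the delicate point'' but it is precisely where the argument is missing. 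The paper avoids this entirely: its contradiction comes not from lattice-theoretic considerations but from the girth-$8$ condition applied to a \emph{6-cycle} in the link of the auxiliary vertex $w$, a 6-cycle that only exists after invoking the already-established filling of 8-cycles. I would recommend either adopting that route or, if you wish to keep the disk-diagram framework, carrying out the full case analysis of reduced diagrams rather than asserting the cone structure.
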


\begin{proof}
    Suppose $D'$ is a good Falk subcomplex containing $\tilde\gamma$. If $\tilde\gamma$ is locally geodesic under the orthoscheme metric, then since this metric is $\cat(0)$, we would have that $\tilde\gamma$ is a global geodesic, and hence not closed. So there must be some point at which $\tilde \gamma$ is not locally geodesic in the orthoscheme metric. It is clear that interiors of edges are always geodesic.
    It follows from the fact that the link of a $\hat s_2$ vertex is complete bipartite that $\tilde\gamma$ is always locally geodesic at vertices of type $\hat s_2$.
    Hence there must be some vertex $v$ of type $\hat s_3$ at which $\gamma$ fails to be locally geodesic. 
    Let $v_{-2}, v_{-1}, v_0, v_1,v_2$ be a sequence of adjacent (distinct) vertices with $v_0 = v$, and let $\delta$ denote the subpath of $\tilde \gamma$ from $v_{-2}$ to $v_2$ passing through $v_0$.
    In the orthoscheme metric on $D'$, the link of $v$ is a bipartite graph with edge lengths all $\pi/4$. Hence the only way for $\tilde \gamma$ to fail to be locally geodesic at $v$ is for there to exist a vertex $w$ of type $\hat s_1$ adjacent to each of $v_{-1}$, $v_0$, and $v_1$ (see Figure \ref{fig:Falk1}).
\begin{figure}[!ht]
    \centering
    \scalebox{1}
{
\begin{tikzpicture}
[
    regular polygon colors/.style 2 args={
        append after command={%
            \pgfextra
    \foreach \j/\k\l [count=\ni, remember=\ni as \lasti (initially #1)] in 
            {
                solid/100/(\tikzlastnode.corner \lasti) --    
                    (\tikzlastnode.corner \ni), 
                solid/100/(\tikzlastnode.corner \lasti) --    
                    (\tikzlastnode.corner \ni), 
                /100/(\tikzlastnode.corner \lasti) edge 
                    ($(\tikzlastnode.corner \lasti)!0.45!(\tikzlastnode.corner \ni)$)
                    edge [dotted] 
                    ($(\tikzlastnode.corner \lasti)!0.75!(\tikzlastnode.corner \ni)$),
                /0/(\tikzlastnode.corner \lasti) --    
                    (\tikzlastnode.corner \ni), 
                /0/(\tikzlastnode.corner \lasti) --    
                    (\tikzlastnode.corner \ni), 
                /0/(\tikzlastnode.corner \lasti) --    
                    (\tikzlastnode.corner \ni), 
                /0/(\tikzlastnode.corner \lasti) --    
                    (\tikzlastnode.corner \ni), 
                dotted/100/
                    (\tikzlastnode.corner \ni) edge [solid]
                    ($(\tikzlastnode.corner \ni)!0.45!(\tikzlastnode.corner \lasti)$)
                    edge [dotted] 
                    ($(\tikzlastnode.corner \ni)!0.75!(\tikzlastnode.corner \lasti)$),
                solid/100/(\tikzlastnode.corner \lasti) --    
                    (\tikzlastnode.corner \ni), 
                solid/100/(\tikzlastnode.corner \lasti) --    
                    (\tikzlastnode.corner \ni)
            }
            {
            \draw[\j, opacity = \k] \l;
            } %
        \endpgfextra
    }
},
]
  \node
    [
    minimum size=4cm, regular polygon,
    regular polygon sides=10, 
    regular polygon colors={10}{}
    ] 
        (a) {};
        
    \fill (a.corner 1) circle[radius=1pt] node[above] (p1) {$v_{-1}$};
    \fill (a.corner 2) circle[radius=1pt] node[above] (p2) {$v_{-2}$};
    \fill (a.corner 8) circle[radius=1pt] node[below right] (p8) {$v_{2}$};
    \fill (a.corner 9) circle[radius=1pt] node[right] (p9) {$v_{1}$};
    \fill (a.corner 10) circle[radius=1pt] node[above right] (p10) {$v$};

    \fill[color=red] (a.center) circle[radius=1pt] node[below left] (ac) {$w$};

    \fill [fill=gray!30] (a.center)  -- (a.corner 9)  -- (a.corner 10) -- (a.corner 1);

    \draw[color=red] (a.center) -- (a.corner 1);
    \draw[color=red] (a.center) -- (a.corner 9);
    \draw[color=red] (a.center) -- (a.corner 10);

\end{tikzpicture}
}
    \caption{Adding a vertex of type $\hat s_1$}
    \label{fig:Falk1}
\end{figure}
    Since $v_{-1}$ and $v_1$ are type $\hat s_2$, their links are also complete bipartite, so $w$ must also be adjacent to $v_{-2}$ and $v_{2}$ (see Figure \ref{fig:Falk2}). 
    \begin{figure}[!ht]
        \centering
        \scalebox{1}
{
\begin{tikzpicture}
[
    regular polygon colors/.style 2 args={
        append after command={%
            \pgfextra
    \foreach \j/\k\l [count=\ni, remember=\ni as \lasti (initially #1)] in 
            {
                solid/100/(\tikzlastnode.corner \lasti) --    
                    (\tikzlastnode.corner \ni), 
                solid/100/(\tikzlastnode.corner \lasti) --    
                    (\tikzlastnode.corner \ni), 
                /100/(\tikzlastnode.corner \lasti) edge 
                    ($(\tikzlastnode.corner \lasti)!0.45!(\tikzlastnode.corner \ni)$)
                    edge [dotted] 
                    ($(\tikzlastnode.corner \lasti)!0.75!(\tikzlastnode.corner \ni)$),
                /0/(\tikzlastnode.corner \lasti) --    
                    (\tikzlastnode.corner \ni), 
                /0/(\tikzlastnode.corner \lasti) --    
                    (\tikzlastnode.corner \ni), 
                /0/(\tikzlastnode.corner \lasti) --    
                    (\tikzlastnode.corner \ni), 
                /0/(\tikzlastnode.corner \lasti) --    
                    (\tikzlastnode.corner \ni), 
                dotted/100/
                    (\tikzlastnode.corner \ni) edge [solid]
                    ($(\tikzlastnode.corner \ni)!0.45!(\tikzlastnode.corner \lasti)$)
                    edge [dotted] 
                    ($(\tikzlastnode.corner \ni)!0.75!(\tikzlastnode.corner \lasti)$),
                solid/100/(\tikzlastnode.corner \lasti) --    
                    (\tikzlastnode.corner \ni), 
                solid/100/(\tikzlastnode.corner \lasti) --    
                    (\tikzlastnode.corner \ni)
            }
            {
            \draw[\j, opacity = \k] \l;
            } %
        \endpgfextra
    }
},
]
  \node
    [
    minimum size=4cm, regular polygon,
    regular polygon sides=10, 
    regular polygon colors={10}{}
    ] 
        (a) {};
        
    \fill (a.corner 1) circle[radius=1pt] node[above] (p1) {$v_{-1}$};
    \fill (a.corner 2) circle[radius=1pt] node[above] (p2) {$v_{-2}$};
    \fill (a.corner 8) circle[radius=1pt] node[below right] (p8) {$v_{2}$};
    \fill (a.corner 9) circle[radius=1pt] node[right] (p9) {$v_{1}$};
    \fill (a.corner 10) circle[radius=1pt] node[above right] (p10) {$v$};

    \fill[] (a.center) circle[radius=1pt] node[below left] (ac) {$w$};

    \fill [fill=gray!30] (a.center) -- (a.corner 8) -- (a.corner 9)  
            -- (a.corner 10) -- (a.corner 1)  -- (a.corner 2);

    \draw[] (a.center) -- (a.corner 1);
    \draw[color=red] (a.center) -- (a.corner 2);
    \draw[color=red] (a.center) -- (a.corner 8);
    \draw[] (a.center) -- (a.corner 9);
    \draw[] (a.center) -- (a.corner 10);

\end{tikzpicture}
}
        \caption{Connecting the new vertex to the adjacent type $\hat s_2$ vertices}
        \label{fig:Falk2}
    \end{figure}
    Let $\delta'$ denote the concatenation of the segments between $v_{-2}$ and $w$, and between $w$ and $v_2$. 
    Let $\tilde\gamma'$ be the cycle obtained from $\tilde\gamma$ by replacing $\delta$ with $\delta'$. Then $\tilde\gamma'$ is an 8-cycle in $D(B_3)$
    of the form seen in Figure \ref{fig:checking edge loops}(c), thus can be filled to the following subcomplex:
   \[
    \begin{tikzpicture}[scale=1.2]
        \coordinate (s31) at (0,0) ;
        \coordinate (s21) at (1,0) ;
        \coordinate (s32) at (2,0);
        \coordinate (s1) at (3,-1) ;
        \coordinate (s33) at (2,-2) ;
        \coordinate (s22) at (1,-2) ;
        \coordinate (s34) at (0,-2) ;
        \coordinate (s23) at (0,-1) ;

        \coordinate (s11) at (1,-1) ;
        \coordinate (s24) at (2,-1) ;
        
        \draw[fill=gray!25] (s31) -- (s21)  -- (s32) -- (s1) --(s33) -- (s22) -- (s34)  -- (s23) -- (s31);
        
        \filldraw (s31) circle (0.05cm);
        \filldraw (s21) circle (0.05cm);
        \filldraw (s32) circle (0.05cm);
        \filldraw (s1) circle (0.05cm);
        \filldraw (s33) circle (0.05cm);
        \filldraw (s22) circle (0.05cm);
        \filldraw (s34) circle (0.05cm);
        \filldraw (s23) circle (0.05cm);
        \filldraw (s24) circle (0.05cm);
        \filldraw (s11) circle (0.05cm);

        \draw (s31) -- (s33);
        \draw (s32) -- (s34);
        \draw (s32) -- (s33);
        \draw (s21) -- (s22);
        \draw (s23) -- (s1);

    \node[above] at (s32) {$v_{-2}$} ;
    \node[right] at (s1)  {$w$} ;
    \node[below] at (s33) {$v_{2}$} ;
        \node[above right] at (s24) {$v'$} ;
    \end{tikzpicture}
    \]
    Let $v'$ be the new type $\hat s_2$ vertex adjacent to each of $v_{-2}$, $w$, and $v_2$.
    The vertices $v'$, $v_{-2}$, $v_{-1}, v, v_1$, and $v_2$ give rise to a 6-cycle in the link of $w$. However, $w$ is type $\hat s_1$, and the link of such a vertex has girth 8. Hence this 6-cycle cannot be embedded. Regardless of which vertices are identified in this cycle, it results in $\tilde\gamma$ not being embedded in $D(B_3)$, as well.
    
\end{proof}

It remains to consider those 10-cycles which do not remain in a good Falk subcomplex. 
In order to complete this argument, we must utilize the ``pure Salvetti complex'' of $A(\Gamma)$. As to not get bogged down in details, we will leave the full definition to \cite[\S 4.2]{huang2025cycles}; however, here, we adopt the notation $\widehat \Sigma(\Gamma)$ (rather than $\widehat \Sigma_{\mathcal A}$) for consistency. The following are facts regarding $\widehat \Sigma(\Gamma)$ which will be relevant for us:
\begin{enumerate}
    \item $\pi_1(\widehat\Sigma(\Gamma))$ is the kernel of the quotient homomorphism $A(\Gamma) \to W(\Gamma)$ (called the \emph{pure Artin group}, denoted $PA(\Gamma)$)
    \item There is a projection map $p = p_\Gamma : \widehat\Sigma(\Gamma) \to \Sigma(\Gamma)$ which is a bijection of vertex sets.
    \item If $F$ is a face of $\Sigma(\Gamma)$ and $\widehat F = p^{-1}(F)$, then there is a retraction $\Proj_{\widehat F} : \widehat\Sigma(\Gamma) \to \widehat F$ which restricts to the ``closest point'' projection on the 1-skeleton (under the path metric). In particular, the inclusion $\widehat F \hookrightarrow \widehat\Sigma(\Gamma)$ is $\pi_1$-injective.
\end{enumerate}
We can give a more explicit (and standard) description of (1). Recall that the 1-skeleton of $\Sigma(\Gamma)$ is the (unoriented) Cayley graph of $W(\Gamma)$ with respect to the standard generating set, so we may label edges by generators. 
We can then pull back this labeling along the projection $p : \widehat\Sigma(\Gamma) \to \Sigma(\Gamma)$ so that all edges of $\widehat\Sigma(\Gamma)$ are labeled by a generator. 
For each edge $e$ of $\Sigma(\Gamma)$, $p^{-1}(e)$ consists of exactly two edges (and two vertices) of $\widehat\Sigma(\Gamma)$. We assign these edges ``opposite'' orientations. Now we fix a base point $x \in V(\widehat\Sigma(\Gamma))$ and choose a word $w = s_1^{k_1}\dots s_n^{k_n}$ in $PA(\Gamma)$. If $k_1 > 0$, then we loop around the edges adjacent to $x$ labeled $s_1$ in the positive orientation $k_1$ times. If $k_1 < 0$, then we do the same but in the negative orientation. The end of this path may or may not be $x$, but regardless, we repeat this with $s_2$, and so on. This procedure will give a closed loop in $\widehat\Sigma(\Gamma)$ representing $w$ in $\pi_1(\widehat\Sigma(\Gamma))$. In particular, if $w$ is trivial, then this loop will be nullhomotopic.

We can now return to our analysis of $\tilde\gamma$. 
Our goal is to construct a word in $PA(\Gamma)$ using $\tilde\gamma$, then pass to a loop in $\widehat\Sigma(B_3)$ using the above construction in order to determine more information about $\tilde\gamma$.
Since the vertices of type $\hat s_2$ are complete bipartite, we can fill $\tilde \gamma$ into the following diagram:
\begin{center}
\scalebox{0.8}{
\begin{tikzpicture}
   \node [star, star point height=1.3cm, minimum size=7cm, draw, rotate=36]
       at (0,0) (A) {};

    \foreach \i/\j in {1/2,2/3,3/4,4/5,5/1}
        \draw[] (A.inner point \i) to (A.inner point \j)
        node [circle, label={[label distance=-1mm]160+72*(\i-1):$\hat s_3$}, fill=black, inner sep=1pt] 
            at (A.inner point \i) {}
        node [circle, label={[label distance=-1mm]125+72*(\i-1):$\hat s_1$}, fill=black, inner sep=1pt] 
            at (A.outer point \i) {}
        node [circle, label={[label distance=-1mm]20+72*(\i-1):$\hat s_2$}, fill=black, inner sep=1pt]
            at ($(A.inner point \i)!0.5!(A.inner point \j)$) (B \i) {}
        ;
    \foreach \r/\s in {1/5,2/1,3/2,4/3,5/4}
        \draw[] (A.outer point \r) to (B \s);

\end{tikzpicture}
}
\end{center}
Just as an edge is labeled by the intersection of its vertices, we can label a 2-simplex by the intersection of its vertices. This will be a coset of the trivial subgroup, i.e., a single group element, so we may as well label a 2-simplex solely by this element. 
\begin{center}
\scalebox{0.8}{
\begin{tikzpicture}
   \node [star, star point height=1.3cm, minimum size=7cm, draw, rotate=36]
       at (0,0) (A) {};

    \foreach \i/\j in {1/2,2/3,3/4,4/5,5/1}
        \draw[] (A.inner point \i) to (A.inner point \j)
        node [circle, label={[label distance=-1mm]160+72*(\i-1):$\hat s_3$}, fill=black, inner sep=1pt] 
            at (A.inner point \i) {}
        node [circle, label={[label distance=-1mm]125+72*(\i-1):$\hat s_1$}, fill=black, inner sep=1pt] 
            at (A.outer point \i) {}
        node [circle, label={[label distance=-1mm]20+72*(\i-1):$\hat s_2$}, fill=black, inner sep=1pt]
            at ($(A.inner point \i)!0.5!(A.inner point \j)$) (B \i) {}
        ;
    \foreach \r/\s in {1/5,2/1,3/2,4/3,5/4}
        \draw[] (A.outer point \r) to (B \s);

    \foreach \t/\u in {1/2,3/4,5/6,7/8,9/{10}}
        \node[color=blue] at ({135-(\t -1)*36}:2.4cm) (C \t) {$g_\t$}
        node[color=blue] at ({115-(\u -2)*36}:2.4cm) (C \u) {$g_{\u}$}
        ;
\end{tikzpicture}
}
\end{center}
Suppose we have adjacent 2-simplices $\sigma_1 = \{h_1\}$ and $\sigma_2 = \{h_2\}$ with $\alpha$ the edge or vertex at which they meet. Then there is an element $a \in \alpha$ (recall $\alpha$ is a coset of a standard parabolic subgroup) such that $\sigma_1 = a \sigma_2$, or in the group, $h_1 = a h_2$. 
Returning to the diagram and utilizing this observation, we see there is some $a_1 \in A(s_3)$ such that $g_2 = a_1 g_1$. Of course, since $a_1 \in A(s_3)$, we may write $a_1 = s_3^{n_1}$ for some $n_1 \in \mathbb{Z}$. 
Similarly, there is some $a_2 \in A(\hat s_3)$ such that $g_3 = a_2 g_2$. Repeating, we have $g_{2k} = a_{2k} g_{2k-1}$ where $a_{2k} = s_3^{n_{2k}}$, and 
$g_{2k+1} = a_{2k+1} g_{2k}$ where $a_{2k+1} \in A(\hat s_3)$. 
But importantly, since this is a cycle, we also have that $g_1 = a_{10} g_{10}$ where $a_{10} \in  A(\hat s_3)$.
Combining these equations gives us the trivial word
\[
    w = a_1a_2\dots a_{10}. 
\] 
In particular, $w \in PA(\Gamma)$, so there is a loop $\gamma$ representing $w$ in $\widehat\Sigma(\Gamma)$ as constructed previously (and since $w$ is trivial, $\gamma$ is nullhomotopic).

\begin{defn}
    Let $E$ be an edge of $\Sigma(B_3)$ labeled $s$, for $s = s_1, s_2$, or $s_3$. We call $\hat E \coloneqq p^{-1}(E)$ an \emph{$s$ loop}. 
    Let $F$ be a 2-cell of $\Sigma(B_3)$. Then $F$ is either a square, hexagon, or octagon, and we will call $\hat F \coloneqq p^{-1}(F)$ a \emph{square, hexagon, or octagon} of $\widehat\Sigma(B_3)$, respectively. We will call $\hat E$ and $\hat F$ \emph{special subcomplexes} of $\widehat\Sigma(B_3)$.
\end{defn}

Suppose $\hat F$ and $\hat G$ are special subcomplexes of $\widehat\Sigma(B_3)$. We will say that $\gamma$ \emph{leaves $\hat F$ and enters $\hat G$} at time $t$ if for all small $\varepsilon > 0$, we have that 
\begin{enumerate}
    \item $\gamma(t) \in \hat F \cap \hat G$,
    \item $\gamma(t + \varepsilon) \in \hat G$, and
    \item $\gamma(t + \varepsilon) \not\in \hat F$.
\end{enumerate}
A subword $a_i \in A(\hat s_3)$ of $w$ gives rise to a time $t$ where $\gamma$ leaves a square and enters a hexagon. 
A subword $a_i \in A(s_3)$ gives rise to a time $t$ where $\gamma$ leaves a hexagon and enters a square; more specifically, it enters an $s_3$ loop.
Notice that this means $\gamma$ must enter hexagons 5 times and must enter squares 5 times.

The next lemma follows immediately from unraveling the full definitions found in \cite{huang2025cycles}.
\begin{lem} 
    Let $U$ denote the subcomplex of $\Sigma(B_3)$ seen in Figure \ref{fig:good Falk in Davis}, and let $\widehat U = p^{-1}(U) \subseteq \widehat\Sigma(B_3)$.
    Then $\tilde\gamma$ stays in a good Falk subcomplex if and only if $\gamma$ stays in a subcomplex of $\widehat\Sigma(B_3)$ isomorphic to $\widehat U$. 
\end{lem}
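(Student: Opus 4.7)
The plan is to translate between the two sides using the natural maps $\rho : D(B_3) \to C(B_3)$ and $p : \widehat\Sigma(B_3) \to \Sigma(B_3)$ together with the explicit construction of $\gamma$ from $\tilde\gamma$ described above. The two conditions encode the same halfspace data, just viewed from opposite sides of the Coxeter/Artin correspondence, so the work is mostly bookkeeping through definitions.

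First, I would unravel the definition of a good Falk subcomplex. Saying $\tilde\gamma$ lies in a good Falk subcomplex $D'$ is equivalent to saying that the image under $\rho$ of every 2-simplex in the natural disk filling $\tilde\gamma$ (the one built from the simplices labeled $g_1,\dots,g_{10}$) lies in the largest closed subcomplex $U_C \subset C(B_3)$ contained in a single open halfspace bounded by a reflection hyperplane $H$ of type $s_3$. Equivalently, under the quotient $A(B_3) \to W(B_3)$, the images $\overline{g}_1,\dots,\overline{g}_{10}$ all lie on one side of $H$ and no $\overline{g}_i$ meets $H$. On the Artin-group side, since $\Sigma(B_3)$ is built from the coset structure of $W(B_3)$, the same hyperplane $H$ corresponds to a wall in $\Sigma(B_3)$, and the subcomplex $U$ in Figure \ref{fig:good Falk in Davis} is exactly the union of cells on a chosen side of this wall; pulling back along $p$ then describes $\widehat U$ as the corresponding union of squares, hexagons, and octagons of $\widehat\Sigma(B_3)$.

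Next, I would use the construction of $\gamma$ as a dictionary between the two sides. By that construction, each time $\gamma$ enters a new hexagon or square it does so at a vertex whose $p$-image in $\Sigma(B_3)$ is determined by one of the partial products $a_i a_{i+1} \cdots a_{10} g_1$; after projecting to $W(B_3)$, these partial products are precisely the labels $\overline{g}_i$ of the 2-simplices of the filled disk of $\tilde\gamma$. Consequently, $\gamma$ remains in a subcomplex isomorphic to $\widehat U$ if and only if each $\overline{g}_i$ lies in the chosen halfspace of $\Sigma(B_3)$, which is exactly the condition that $\tilde\gamma$ lies in the corresponding good Falk subcomplex of $D(B_3)$.

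The main obstacle will be matching the choices of lifts on the two sides: a good Falk subcomplex depends on a choice of lift along $\rho$, and ``a subcomplex isomorphic to $\widehat U$'' depends on a choice of preimage component under $p$ together with the freedom to translate by $PA(B_3)$, so one must verify that any admissible choice on one side gives a consistent choice on the other. This should follow from transitivity of the $PA(B_3)$-deck action on lifts together with translation by suitable elements of $W(B_3)$ permuting the halfspaces; once that is set up, the equivalence is, as the author indicates, essentially formal.
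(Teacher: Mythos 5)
The paper gives no proof of this lemma beyond asserting that it follows from unraveling the definitions in \cite{huang2025cycles}, and your sketch is a faithful version of exactly that unraveling: both containment conditions amount to the statement that the $W(B_3)$-images $\overline{g}_i$ of the labels of the filled disk lie strictly on one side of a type-$s_3$ wall, with the lift/translation bookkeeping you flag at the end as the only remaining formality. The only slip is the index order in your partial products (from $g_{i+1}=a_ig_i$ one gets $g_{i+1}=a_ia_{i-1}\cdots a_1g_1$, not $a_ia_{i+1}\cdots a_{10}g_1$), which does not affect the argument.
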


\begin{figure}
    \centering
    \tikzset{
      A/.style={regular polygon, regular polygon sides=8,fill=gray!#1,minimum size=2cm, draw},
      B/.style={regular polygon, regular polygon sides=4,fill=gray!#1,minimum size=1.1cm, draw},
    }
    
    \scalebox{1.5}
    {
    \begin{tikzpicture}[node distance=0pt, every node/.style={outer sep=0pt}]
      \node (C1) [A=0] {};
      \node (B1) [B=0, anchor=corner 3] at (C1.corner 2) {};
      \node (B2) [B=0, anchor=corner 1] at (C1.corner 3) {};
      \node (B3) [B=0, anchor=corner 2] at (C1.corner 5) {};
      \node (B4) [B=0, anchor=corner 2] at (C1.corner 8) {};
    
      \begin{scope}[rotate=60]
            \draw[] (B1.corner 2) -- ([shift={(75:1.1cm)}]B1.corner 3) -- 
                ([shift={(75:1.1cm)}]B2.corner 1) -- (B2.corner 2);
            \draw[] (B2.corner 3) -- ([shift={(165:1.1cm)}]B2.corner 4) -- 
                ([shift={(165:1.1cm)}]B3.corner 2) -- (B3.corner 3);
            \draw[] (B3.corner 4) -- ([shift={(255:1.1cm)}]B3.corner 1) -- 
                ([shift={(255:1.1cm)}]B4.corner 3) -- (B4.corner 4);
            \draw[] (B1.corner 1) -- ([shift={(345:1.1cm)}]B1.corner 4) -- 
                ([shift={(345:1.1cm)}]B4.corner 2) -- (B4.corner 1);
      \end{scope}
    
        \coordinate (d1) at 
            ($(B1.corner 1)!0.5 !(B1.corner 2)$) {};
        \coordinate (d3) at 
            ($(B2.corner 2)!0.5 !(B2.corner 3)$) {};
        \coordinate (d5) at 
            ($(B3.corner 3)!0.5 !(B3.corner 4)$) {};
        \coordinate (d7) at 
            ($(B4.corner 4)!0.5 !(B4.corner 1)$) {};
            
        \coordinate (d2) at (135:2.03cm) {};
        \coordinate (d4) at (225:2.03cm) {};
        \coordinate (d6) at (315:2.03cm) {};
        \coordinate (d8) at (45:2.03cm) {};

        \foreach \r/\s in {1/2,3/4,5/6,7/8}
             \coordinate[] (e\r) at ({115+(\r-1)*45}: 1.88cm) {}
              coordinate[] (e\s) at ({155+(\r-1)*45}: 1.88cm) {};
    
        \coordinate (f3) at ($(B2.corner 1) ! 0.5 ! (B2.corner 2)$) {};
        \coordinate (f4) at ($(B2.corner 3) ! 0.5 ! (B2.corner 4)$) {};
        \coordinate (f5) at ($(B3.corner 2) ! 0.5 ! (B3.corner 3)$) {};
        \coordinate (f6) at ($(B3.corner 1) ! 0.5 ! (B3.corner 4)$) {};
         \coordinate (f7) at ($(B4.corner 3) ! 0.5 ! (B4.corner 4)$) {};
         \coordinate (f8) at ($(B4.corner 1) ! 0.5 ! (B4.corner 2)$) {};
         \coordinate (f1) at ($(B1.corner 1) ! 0.5 ! (B1.corner 4)$) {};
         \coordinate (f2) at ($(B1.corner 2) ! 0.5 ! (B1.corner 3)$) {};

        \foreach \i/\j in {1/5,2/6,3/7,4/8}
            \draw[red] (d\i) to (d\j);
    
        \draw[red] (e1) to[curve through = {(f3) (f4)}] (e4);
        \draw[red] (e3) to[curve through = {(f5) (f6)}] (e6);
        \draw[red] (e5) to[curve through = {(f7) (f8)}] (e8);
        \draw[red] (e7) to[curve through = {(f1) (f2)}] (e2);
    
    \end{tikzpicture}
    }
        \caption{A portion of $\Sigma(B_3)$ corresponding to the good Falk subcomplex, with the Coxeter complex cell structure overlaid in red.}
        \label{fig:good Falk in Davis}
    \end{figure}
    
    From this, we can show that 10-cycles leaving a good Falk subcomplex must have a certain form in $\widehat\Sigma(B_3)$.
    
    \begin{lem} \label{Lemma: Good hyperplane}
        If $\tilde \gamma$ is not contained in a good Falk complex, then there is (at least) one square $S \subseteq \Sigma(B_3)$ such that 
        \begin{enumerate}
            \item $\gamma$ enters $\hat S$ exactly one time, and
            \item If $H$ is the hyperplane of $\Sigma(B_3)$ dual to the $s_3$ edges of $S$, and $\hat F \not= \hat S$ is a hexagon or square which $\gamma$ enters, then $p(\hat F) \cap H = \varnothing$.
        \end{enumerate}
    \end{lem}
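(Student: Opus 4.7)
The plan is to argue the contrapositive: suppose no square $S$ with the stated properties exists, and deduce that $\gamma$ must then lie in a subcomplex of $\widehat\Sigma(B_3)$ isomorphic to $\widehat U$, where $U$ is the pattern of Figure \ref{fig:good Falk in Davis}. By the previous lemma this would place $\tilde\gamma$ inside a good Falk subcomplex, contradicting the hypothesis.

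First I would catalog, with multiplicity, the five $s_3$-loops $\hat E_1,\dots,\hat E_5$ coming from the odd subwords $a_{2i-1} = s_3^{n_{2i-1}}$ and the five hexagons $\hat F_1,\dots,\hat F_5$ coming from the even subwords $a_{2i} \in A(\langle s_1,s_2\rangle)$. Each projected edge $E_i = p(\hat E_i)$ lies in a unique $\langle s_1,s_3\rangle$-square $S_i$ of $\Sigma(B_3)$, and $\gamma$ enters $\hat S_i$ precisely when it enters $\hat E_i$; the hyperplane $H_{S_i}$ is then the wall fixed by the reflection conjugate to $s_3$ acting on $S_i$. Note that if fewer than five distinct squares appear on the list, then the ``enters exactly once'' condition is automatic for at least one entry, so the real content is in condition (2).

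Second, under the contrapositive hypothesis---every entered square is either entered more than once, or has its dual $s_3$-hyperplane meet some other entered hexagon or square under $p$---I would use the retractions $\Proj_{\widehat F}$ from property (3) of $p$ together with $\pi_1$-injectivity of the special subcomplexes to pin the configuration of all ten entered subcomplexes into a bounded region of $\Sigma(B_3)$. Each collision $H_{S_i} \cap p(\hat F) \neq \varnothing$, for $\hat F$ a foreign entered subcomplex, forces $\hat F$ to share an incidence with $S_i$ of one of a short list of local types dictated by the $B_3$ Coxeter combinatorics. Iterating these incidences cyclically around the ten entries of $\gamma$ closes up to a copy of $U$ inside $\Sigma(B_3)$; since $\gamma$ is nullhomotopic in $\widehat\Sigma(B_3)$ and $\widehat U \hookrightarrow \widehat\Sigma(B_3)$ is $\pi_1$-injective, $\gamma$ itself can then be pulled into $\widehat U$, yielding the desired contradiction via the previous lemma.

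The main obstacle is the rigidity step above: proving that the collision hypothesis genuinely forces the ten entered special subcomplexes to assemble into a $U$-pattern rather than some alternative configuration. This reduces to a careful local analysis in the Davis complex $\Sigma(B_3)$ of which hexagonal and square $2$-cells can be incident to a given $s_3$-hyperplane, together with a combinatorial check that five such incidences (one per entered square) cannot be arranged in any other way consistent with the nullhomotopy of $\gamma$. The analysis is finite thanks to the local structure of the Coxeter complex, but enumerating the cases and matching the resulting pattern to the explicit figure for $U$ is the technical heart of the argument.
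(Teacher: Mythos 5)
Your setup is right---the word $w=a_1\cdots a_{10}$ forces $\gamma$ to alternate between squares and hexagons, entering each kind of special subcomplex exactly five times---but the proposal defers precisely the step that constitutes the lemma. You write that the ``rigidity step'' (showing the entered cells must assemble into a copy of $U$, or else produce a singly-entered square with the hyperplane property) reduces to a finite local analysis that you do not carry out; that analysis \emph{is} the proof. The paper organizes it not by tracking hyperplane collisions but by a direct case split on the number of distinct hexagons $\gamma$ enters (necessarily $1$, $2$, $3$, or $4$, since entering $5$ distinct hexagons would force more than $5$ square-entries). In each case the alternation pins $\gamma$ inside $p^{-1}(K)$ for one of the explicit subcomplexes $K$ of Figure \ref{fig:Leaving Falk}, and a short parity/counting argument on the five square-entries shows either that $K$ sits inside a copy of $U$ (so $\tilde\gamma$ is in a good Falk subcomplex) or that some square is entered exactly once and is far enough from the rest of the configuration that its dual $s_3$-hyperplane misses every other entered cell.

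Two specific points in your sketch would fail as written. First, the claim that if fewer than five distinct squares appear then some square is automatically entered exactly once is false: five entries distributed over two squares as $3+2$ gives no square entered exactly once, so condition (1) is genuinely something to verify, not a freebie. Second, the final step---``since $\gamma$ is nullhomotopic and $\widehat U\hookrightarrow\widehat\Sigma(B_3)$ is $\pi_1$-injective, $\gamma$ can be pulled into $\widehat U$''---does not establish what is needed. The preceding lemma characterizes containment in a good Falk subcomplex by the path $\gamma$ \emph{literally staying in} a subcomplex isomorphic to $\widehat U$, not by $\gamma$ being homotopic to a loop in $\widehat U$; nullhomotopy plus $\pi_1$-injectivity gives you nothing about where the actual path travels. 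The retraction/$\pi_1$-injectivity machinery you invoke here is what the paper uses in the \emph{next} lemma (to kill the subword $a_i$ once the special square $S$ has been found), not in this one.
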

    
\begin{figure}[!ht]
    \centering
    \tikzset{
      A/.style={regular polygon, regular polygon sides=6,fill=gray!#1,minimum size=2cm, draw},
      B/.style={regular polygon, regular polygon sides=4,fill=gray!#1,minimum size=1.41cm, draw},
    }
    
    \scalebox{0.75}
    {
    \begin{tikzpicture}[node distance=0pt, every node/.style={outer sep=0pt}]
      \node (C1) [A=0] {};
      \node (B1) [B=0, anchor=corner 3, rotate=30] at (C1.corner 6) {};
      \node (B2) [B=0, anchor=corner 3, rotate=60] at (C1.corner 3) {};
      \node (B3) [B=0, anchor=corner 2] at (C1.corner 4) {};
      
            \node at (0,-4.5) {(a)};    
    \end{tikzpicture}
    
    \qquad\quad
    
    \begin{tikzpicture}[node distance=0pt, every node/.style={outer sep=0pt}]
      \node (C1) [A=0] {};
      \node (B1) [B=0, anchor=corner 3, rotate=30] at (C1.corner 6) {};
      \node (B2) [B=0, anchor=corner 3, rotate=60] at (C1.corner 3) {};
      \node (B3) [B=0, anchor=corner 2] at (C1.corner 4) {};
      \node (C2) [A=0, anchor=corner 1] at (B3.corner 4) {};
      \node (B4) [B=0, anchor=corner 4, rotate=30] at (C2.corner 4) {};
      \node (B5) [B=0, anchor=corner 1, rotate=60] at (C2.corner 6) {};
            \node at (0,-5.5) {(b)};    
    \end{tikzpicture}
    
    \qquad\quad
    
    \begin{tikzpicture}[node distance=0pt, every node/.style={outer sep=0pt}]
      \node (C1) [A=0] {};
      \node (B1) [B=0, anchor=corner 3, rotate=30] at (C1.corner 6) {};
      \node (B2) [B=0, anchor=corner 3, rotate=60] at (C1.corner 3) {};
      \node (B3) [B=0, anchor=corner 2] at (C1.corner 4) {};
      \node (C2) [A=0, anchor=corner 1] at (B3.corner 4) {};
      \node (B4) [B=0, anchor=corner 4, rotate=30] at (C2.corner 4) {};
      \node (B5) [B=0, anchor=corner 1, rotate=60] at (C2.corner 6) {};
      \node (C3) [A=0, anchor=corner 2] at (B5.corner 4) {};
      \node (B6) [B=0, anchor=corner 1, rotate=0] at (C3.corner 5) {};
      \node (B7) [B=0, anchor=corner 2, rotate=30] at (C3.corner 1) {};
      
            \node at (0,-6.5) {(c)};    
    \end{tikzpicture}
    }
    
    \caption{Various subcomplexes of $\Sigma(B_3)$}
    \label{fig:Leaving Falk}
\end{figure}
\begin{proof}
We can group the possibilities for $\gamma$ based on the number of distinct hexagons it enters. Throughout, let $U$ be a subcomplex of $\Sigma(B_3)$ of the form shown in Figure \ref{fig:good Falk in Davis}, and let $\hat U = p^{-1}(U)$.

Suppose $\gamma$ enters exactly one hexagon (possibly multiple times). Then $\gamma$ remains in a subcomplex of $\widehat\Sigma(B_3)$ of the form $p^{-1}(K)$, where $K$ has the shape seen in Figure \ref{fig:Leaving Falk}(a). If it enters one or two of the adjacent squares, then it is contained in some $\hat U$, and so $\tilde \gamma$ is contained in a good Falk complex.
If instead it enters three squares, then it must enter one of them exactly once, and the hyperplane dual to this square's $s_3$ edges satisfies the conclusion of the Lemma.

Suppose $\gamma$ enters exactly two hexagons. Then $\gamma$ remains in a subcomplex of $\widehat\Sigma(B_3)$ of the form $p^{-1}(K)$, where $K$ has the shape seen in Figure \ref{fig:Leaving Falk}(b).
Note that there is exactly one square between these hexagons, and $\gamma$ must enter it an even number of times in order to be a closed loop. Thus $\gamma$ may enter exactly one, two, or, three other squares. If $\gamma$ enters one other square, or two other squares in the configuration seen in Figure \ref{fig:hexagons good}(a), then it is contained in some $\hat U$, so $\tilde \gamma$ is contained in a good Falk subcomplex.
Otherwise, there must be some square which it intersects exactly once, and the hyperplane dual to this square's $s_3$ edges satisfies the conclusion of the Lemma.

\begin{figure}[!ht]
    \centering
    \tikzset{
      A/.style={regular polygon, regular polygon sides=6,fill=gray!#1,minimum size=2cm, draw},
      B/.style={regular polygon, regular polygon sides=4,fill=gray!#1,minimum size=1.41cm, draw},
    }
    
    \scalebox{0.75}
    {
    \begin{tikzpicture}[node distance=0pt, every node/.style={outer sep=0pt}]
      \node (C1) [A=0] {};
      \node (B1) [B=0, anchor=corner 3, rotate=30] at (C1.corner 6) {};
      \node (B3) [B=0, anchor=corner 2] at (C1.corner 4) {};
      \node (C2) [A=0, anchor=corner 1] at (B3.corner 4) {};
      \node (B5) [B=0, anchor=corner 1, rotate=60] at (C2.corner 6) {};
            \node at (0,-5.5) {(a)};    
    \end{tikzpicture}

    \qquad\qquad
    
    \begin{tikzpicture}[node distance=0pt, every node/.style={outer sep=0pt}]
      \node (C1) [A=0] {};
      \node (B1) [B=0, anchor=corner 3, rotate=30] at (C1.corner 6) {};
      \node (B3) [B=0, anchor=corner 2] at (C1.corner 4) {};
      \node (C2) [A=0, anchor=corner 1] at (B3.corner 4) {};
      \node (B5) [B=0, anchor=corner 1, rotate=60] at (C2.corner 6) {};
      \node (C3) [A=0, anchor=corner 2] at (B5.corner 4) {};
      \node (B7) [B=0, anchor=corner 2, rotate=30] at (C3.corner 1) {};
      
            \node at (1,-5.6) {(b)};    
    \end{tikzpicture}
    }
    \caption{Good configurations}
    \label{fig:hexagons good}
\end{figure}

Suppose $\gamma$ enters exactly three hexagons. Then $\gamma$ remains in a subcomplex of $\widehat\Sigma(B_3)$ of the form $p^{-1}(K)$, where $K$ has the shape seen in Figure \ref{fig:Leaving Falk}(c). In order to enter all three hexagons, it must enter the squares between them at least 4 times. Therefore there is at most one other square it can enter. If it stays within
a subcomplex of the form seen in Figure \ref{fig:hexagons good}(b), then it is contained in some $\hat U$, so $\tilde \gamma$ is contained in a good Falk subcomplex.
If not, then the $s_3$ edges of any of the three remaining squares satisfy the conclusions of the Lemma.

Suppose $\gamma$ enters exactly four hexagons. Then since $\gamma$ must enter the squares between each hexagon at least once, it must remain in a subcomplex of $\widehat\Sigma(B_3)$ of the form $p^{-1}(K)$, where $K$ has the shape seen in Figure \ref{fig:4 hexagon Falk}. If $\gamma$ does not enter the extra top-right square, then it is contained in some $\hat U$, so $\tilde \gamma$ is contained in a good Falk subcomplex. If it does enter this square, then it can do so only once, and the $s_3$ edges of this square satisfy the conclusions of the Lemma.

\begin{figure}[!ht]
    \centering
        
    \tikzset{
      A/.style={regular polygon, regular polygon sides=8,fill=gray!#1,minimum size=2cm, draw},
      B/.style={regular polygon, regular polygon sides=4,fill=gray!#1,minimum size=1.1cm, draw},
    }
    
    \scalebox{1.5}
    {
    \begin{tikzpicture}[node distance=0pt, every node/.style={outer sep=0pt}]
      \node (C1) [A=0] {};
      \node (B1) [B=0, anchor=corner 3] at (C1.corner 2) {};
      \node (B2) [B=0, anchor=corner 1] at (C1.corner 3) {};
      \node (B3) [B=0, anchor=corner 2] at (C1.corner 5) {};
      \node (B4) [B=0, anchor=corner 2] at (C1.corner 8) {};
    
      \begin{scope}[rotate=60]
            \draw[] (B1.corner 2) -- ([shift={(75:1.1cm)}]B1.corner 3) -- 
                ([shift={(75:1.1cm)}]B2.corner 1) -- (B2.corner 2);
            \draw[] (B2.corner 3) -- ([shift={(165:1.1cm)}]B2.corner 4) -- 
                ([shift={(165:1.1cm)}]B3.corner 2) -- (B3.corner 3);
            \draw[] (B3.corner 4) -- ([shift={(255:1.1cm)}]B3.corner 1) -- 
                ([shift={(255:1.1cm)}]B4.corner 3) -- (B4.corner 4);
             \draw[] (B1.corner 1) -- ([shift={(345:1.1cm)}]B1.corner 4)  -- 
                 ([shift={(345:1.1cm)}]B4.corner 2) -- (B4.corner 1);
                 
        \node[draw, rectangle, minimum height=0.75cm, minimum width=0.75cm, anchor=south west, rotate=-45.5] at ([shift={(345:1.1cm)}, rotate=0]B1.corner 4) {};        
    
      \end{scope}

        \coordinate (d1) at 
            ($(B1.corner 1)!0.5 !(B1.corner 2)$) {};
        \coordinate (d3) at 
            ($(B2.corner 2)!0.5 !(B2.corner 3)$) {};
        \coordinate (d5) at 
            ($(B3.corner 3)!0.5 !(B3.corner 4)$) {};
        \coordinate (d7) at 
            ($(B4.corner 4)!0.5 !(B4.corner 1)$) {};
            
        \coordinate (d2) at (135:2.03cm) {};
        \coordinate (d4) at (225:2.03cm) {};
        \coordinate (d6) at (315:2.03cm) {};
        \coordinate (d8) at (45:2.03cm) {};

        \foreach \r/\s in {1/2,3/4,5/6,7/8}
             \coordinate[] (e\r) at ({115+(\r-1)*45}: 1.88cm) {}
              coordinate[] (e\s) at ({155+(\r-1)*45}: 1.88cm) {};
    
        \coordinate (f3) at ($(B2.corner 1) ! 0.5 ! (B2.corner 2)$) {};
        \coordinate (f4) at ($(B2.corner 3) ! 0.5 ! (B2.corner 4)$) {};
        \coordinate (f5) at ($(B3.corner 2) ! 0.5 ! (B3.corner 3)$) {};
        \coordinate (f6) at ($(B3.corner 1) ! 0.5 ! (B3.corner 4)$) {};
         \coordinate (f7) at ($(B4.corner 3) ! 0.5 ! (B4.corner 4)$) {};
         \coordinate (f8) at ($(B4.corner 1) ! 0.5 ! (B4.corner 2)$) {};
         \coordinate (f1) at ($(B1.corner 1) ! 0.5 ! (B1.corner 4)$) {};
         \coordinate (f2) at ($(B1.corner 2) ! 0.5 ! (B1.corner 3)$) {};

    \end{tikzpicture}
    }
    \caption{The 4-hexagon subcomplex}
    \label{fig:4 hexagon Falk}
\end{figure}

Note that $\gamma$ may not enter more than 4 distinct hexagons, as this would require it to enter squares more than 5 times. Thus we have exhausted all possibilities for $\gamma$.
\end{proof}

Now we may complete the proof of the remaining case. Our argument will follow a similar structure to many of the core ideas of \cite{huang2025cycles}.

\begin{lem} \label{lem: Not good Falk not embedded}
    If $\tilde \gamma$ is not contained in a good Falk subcomplex, then it is not embedded.
\end{lem}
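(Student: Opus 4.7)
Assume for contradiction that $\tilde\gamma$ is embedded. The plan is to feed $\tilde\gamma$ into the pure Salvetti complex machinery to obtain a nullhomotopic combinatorial loop $\gamma \subset \widehat\Sigma(B_3)$, then use the $\pi_1$-injective retraction $\Proj_{\hat S}$ onto a carefully chosen special subcomplex $\hat S$ to force a degeneracy in the encoding that collapses two vertices of $\tilde\gamma$. Explicitly, reading off the star filling gives a trivial word $w = a_1 \cdots a_{10}$ in $PA(B_3)$ whose letters alternate between $A(s_3) = \langle s_3 \rangle$ (associated with the five bridge edges of type $[\hat s_1, \hat s_2]$, whose stabilizer is $\langle s_3 \rangle$) and $A(\hat s_3)$ (associated with the five radial edges of type $[\hat s_1, \hat s_3]$); this word traces a nullhomotopic loop $\gamma$ in $\widehat\Sigma(B_3)$. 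By Lemma \ref{Lemma: Good hyperplane}, fix a square $S \subseteq \Sigma(B_3)$ such that $\gamma$ enters $\hat S$ exactly once and the hyperplane $H$ dual to the $s_3$-edges of $S$ is disjoint from $p(\hat F)$ for every other special subcomplex $\hat F$ visited by $\gamma$.

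Next, apply $\Proj_{\hat S} : \widehat\Sigma(B_3) \to \hat S$. Since $\gamma$ is nullhomotopic and $\hat S \hookrightarrow \widehat\Sigma(B_3)$ is $\pi_1$-injective, $\Proj_{\hat S}(\gamma)$ is nullhomotopic in $\hat S$. The subcomplex $\hat S$ is the pure Salvetti complex of the commuting pair $\{s_1, s_3\}$, hence is a torus; its fundamental group is free abelian, and nullhomotopy is detected by vanishing of two winding numbers. In particular, the net $s_3$-winding must be zero. The hyperplane disjointness hypothesis ensures that every edge of $\gamma$ lying in a cell other than $\hat S$ is sent, under the closest-point projection on the 1-skeleton, to a path in $\hat S$ that does not cross the image of $H$, and hence contributes zero net $s_3$-winding. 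Consequently the full $s_3$-winding of $\Proj_{\hat S}(\gamma)$ is accounted for by the single visit of $\gamma$ to $\hat S$: the loop enters via the letter $a_{i_0} = s_3^{n_0}$, remains inside $\hat S$ without exiting (otherwise it would re-enter later), and leaves, contributing winding exactly $n_0$.

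Vanishing of this winding forces $n_0 = 0$, hence $g_{i_0+1} = g_{i_0}$. The two star-filling triangles on either side of the bridge edge corresponding to $a_{i_0}$ then coincide, which identifies their opposite vertices of type $\hat s_3$; these are two distinct tips of the outer pentagon of $\tilde\gamma$, so $\tilde\gamma$ revisits a vertex, contradicting embeddedness. The main obstacle is the $s_3$-winding computation in the middle step: it requires checking cell by cell that each hexagon, octagon, or square traversed by $\gamma$ whose image misses $H$ is sent by $\Proj_{\hat S}$ to a path lying in a single $s_3$-level of $\hat S$. This reduces to a geometric analysis that uses the closest-point description of $\Proj_{\hat S}$ together with the fact that $H$ separates the 1-skeleton of $\widehat\Sigma(B_3)$ consistently across such cells.
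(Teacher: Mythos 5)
Your proposal is correct and follows essentially the same route as the paper: build the trivial word $w=a_1\cdots a_{10}$, invoke Lemma \ref{Lemma: Good hyperplane} to isolate a square $\hat S$ entered exactly once, and use a $\pi_1$-retraction onto a special subcomplex to conclude the isolated letter $a_{i_0}=s_3^{n_0}$ is trivial, whence $g_{i_0+1}=g_{i_0}$ and $\tilde\gamma$ is not embedded. The only difference is that the paper retracts onto the $s_3$ edge-loop $\hat E\subseteq\hat S$ rather than the full torus $\hat S$, which dissolves the ``main obstacle'' you flag: since $p(\hat F)\cap H=\varnothing$ for every other cell $\hat F$ visited, the closest-point property of $\Proj_{\hat E}$ sends each such $\hat F$ to a single vertex of $\hat E$, so $(\Proj_{\hat E})_*(w)=a_{i_0}$ with no cell-by-cell winding computation needed.
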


\begin{proof}
Consider the square $S$ and hyperplane $H$ guaranteed by \ref{Lemma: Good hyperplane}.
Our prior remarks imply that $\gamma$ only enters a single $s_3$ loop $\hat E$ of $\hat S$, say at time $t_0$. Consider the projection $\Proj_{\hat E}$. If $\hat F \not= \hat S$ is a square or hexagon which $\gamma$ enters, then since $p(\hat F) \cap H = \varnothing$ and $H$ is the hyperplane dual to $E$, we must have that $\Proj_{\hat E}(\hat F)$ is a vertex $\hat v_{\hat F}$ of $\hat E$. But since $\gamma$ is closed and enters $\hat E$ exactly once, every such hexagon or square must lay on the same side of $H$, so there is a vertex $\hat v$ of $\hat E$ such that $\hat v = \hat v_{\hat F}$ for all such $\hat F$. 

Since $\Proj_{\hat E}$ is a topological retraction, it induces a group-theoretic retraction \[(\Proj_{\hat E})_* : \pi_1(\widehat\Sigma(B_3)) \cong PA(B_3) \to \pi_1(\hat E) \cong PA(s_3).\] The group-theoretic translation of the previous paragraph is simply
\[
(\Proj_{\hat E})_*(w) = a_i,
\]
where $a_i \in A(s_3)$ is the subword representing $\gamma$ entering $\hat S$ at time $t_0$.
But $w$ is trivial (or, topologically, $\gamma$ is nullhomotopic), and this is a retract, so this implies $a_i$ is also trivial (or, topologically, $\Proj_{\hat E}(\gamma)$ is still nullhomotopic). Then $g_{i+1} = a_ig_i = g_i$, so $\tilde \gamma$ is not embedded.
\end{proof}

This concludes the verification that all closed loops can be filled, so we may finally conclude

\moussongiscatone 

\begin{proof}
    We will show that $D(B_3)$ satisfies the hypotheses of Theorem \ref{thm:B3 CAT1 Criteria}.
    First, it follows quickly from \cite{cd1995k} that $D(B_3)$ satisfies conditions (1), (2), (3), and (4).
    Lemmas \ref{lemma:filling 4 cycles in DB3},  \ref{lem:filling 6 cycle in DB3}, and \ref{lem:filling 8 cycle in DB3} imply condition (5) is satisfied. Last, Lemmas \ref{lem:good falk 10} and \ref{lem: Not good Falk not embedded} show that condition (6) is satisfied.
\end{proof}

This quickly implies the following.

\generalcatzero

\begin{proof}
    This follows from applying Proposition \ref{prop:Moussong reduction}, noticing that, by the classification of spherical Coxeter-Dynkin diagrams, the (maximal) connected diagrams $\Lambda$ in $\mathcal S^f_\Gamma$ are either dihedral, type $A_3$, or type $B_3$. In the first case, the Moussong metric on $D(\Lambda)$ is $\cat(1)$ by \cite{cd1995k}. In the second case, the Moussong metric on $D(\Lambda)$ is $\cat(1)$ by \cite{charney2004deligne}. And, in the last case, the Moussong metric on $D(\Lambda)$ is $\cat(1)$ by Theorem \ref{prop:edgereduction}.
\end{proof}

\bibliographystyle{alpha}
\bibliography{refs}

\end{document}